\documentclass[12pt]{article}
\usepackage[top=3cm,bottom=3cm,left=3cm,right=3cm]{geometry}
\usepackage{graphicx}
\usepackage{diagbox}
\usepackage{color}
\usepackage{cite}
\usepackage{array}
\usepackage{amsthm,amsmath,amssymb,indentfirst}
\usepackage{arydshln}
\usepackage{rotating}
\usepackage{dashrule}
\usepackage{epstopdf}
\usepackage{setspace}
\usepackage{amsfonts}   % $\Box$
\usepackage{multirow}
\usepackage{makecell}
\usepackage{indentfirst} % 导入indentfirst包
\usepackage{proof}
\graphicspath{{figure/}}

\newtheorem{theorem}{Theorem}[section]
\newtheorem{example}{Example}[section]

\newtheorem{lemma}[theorem]{Lemma}

\newtheorem{corollary}{Corollary}[section]

\date{}
\begin{document}
\baselineskip=0.8truecm
\begin{center}\setlength{\topskip}{0cm}\huge\Large \textbf{The local antimagic (total) chromatic numbers of firecracker graphs and edge-corona product
graphs}
\normalsize \\ [6truemm] Xue Yang$^{a}$, Hong Bian$^{b}$\footnotemark[2], Xueliang Li$^{b}$, Zhixia Yang$^{a}$, Haizheng Yu$^{a}$\\
\footnotetext[2]{Corresponding author: bh1218@163.com}
{\small $^a$ College of Mathematics and System Sciences, Xinjiang University,}\\{\small Urumqi, Xinjiang 830046, P. R. China}\\
{\small$^b$ School of Mathematical Sciences Xinjiang Normal University,}\\{\small Urumqi, Xinjiang 830054, P. R. China}\\

\end{center}
\normalsize\thispagestyle{empty}

\noindent{\sl\bf Abstract:} Let $G=(V(G), E(G))$ be a connected simple graph with $n$ vertices and $m$ edges. A bijection $f: E(G)\rightarrow \{1,2,\cdots,m\}$ is called a local antimagic labeling of $G$,
if for any two adjacent vertices $u$ and $v$ in $G$ we have $\omega(u)\neq\omega(v)$, where $\omega(u)=\sum_{e\in E(u)}f(e)$, and $E(u)$ is the set of edges incident with $u$.
Similarly, a bijection $g: V(G)\cup E(G)\rightarrow \{1,2,\cdots,n+m\}$ is called a local antimagic total labeling of $G$, if for any two adjacent vertices $u$ and $v$ in $G$
we have $\omega_{t}(u)\neq\omega_{t}(v)$, where $\omega_{t}(u)=g(u)+\sum_{e\in E(u)}g(e)$. Obviously, any local antimagic (total) labeling induces a proper vertex-coloring of $G$
when every vertex $v$ is assigned the color $\omega(v)$ ($\omega_{t}(v)$). The local antimagic (total) chromatic number of $G$, denoted by $\chi_{la}(G)$ ($\chi_{lat}(G)$),
is defined as the minimum number of colors taken over all colorings induced by local antimagic (total) labelings of $G$. In this paper, we present the local antimagic (total) chromatic number
of firecracker graph $F_{n,k}$, obtained by the concatenation of $n$ $k$-stars by linking one leaf from each. Then we give the local antimagic chromatic number of the edge-corona product $G\diamond H$
of two graphs $G$ and $H$, where the graph $G\diamond H$ is constructed by taking one copy of $G$ and $|E(G)|$ disjoint copies of $H$ one-to-one assigned to each edge of $G$,
and for every edge $uv \in E(G)$, joining $u$ and $v$ to every vertex of the copy of $H$ associated to $uv$. For the graph $G\diamond H$ studied here, $G$ is a star $S_{k}$ or a
double star $S_{k_{1},k_{2}}$, and $H$ is an empty graph $\overline{K_{r}}$ or a complete graph $K_{2}$.\\

\noindent{\sl\bf Keywords:}\ \ local antimagic (total) labeling; local antimagic (total) chromatic number; firecracker graph; edge-corona product; labeling matrix\\
\noindent{\sl\bf MSC2010:}\ \ 05C78, 05C15
\baselineskip=0.30in

\section{Introduction}

Given a connected simple graph $G=(V(G),E(G))$ with $n$ vertices and $m$ edges, Arumugam et al. \cite{Arumugam-1} and Bensmail et al. \cite{Bensmail} independently introduced
the definition of local antimagic labeling of $G$, which goes as follows. A bijection $f: E(G)\rightarrow\{1,2, \cdots, m\}$ is called a local antimagic labeling of $G$
if any two adjacent vertices $u$ and $v$ in $G$ satisfy that $\omega(u) \neq \omega(v)$, where $\omega(u)=\sum_{e\in E(u)}f(e)$, and $E(u)$ denotes the set of edges incident with vertex $u$.
Putri et al. \cite{Putri} extended this concept to local antimagic total labeling of graphs. A bijection $g: V(G)\cup E(G)\rightarrow\{1,2, \cdots, n+m\}$ is called
a local antimagic total labeling of $G$ if any two adjacent vertices $u$ and $v$ in $G$ satisfy that $\omega_{t}(u) \neq \omega_{t}(v)$, where $\omega_{t}(u)=g(u)+\sum_{e\in E(u)}g(e)$.
It is evident that by assigning $\omega(x)$ ($\omega_{t}(x)$) as the color of the vertex $x$ for each $x\in V(G)$, a proper vertex-coloring of $G$ is naturally induced, which is called a local antimagic (total) coloring labeling of $G$. 
The local antimagic (total) chromatic number \cite{Arumugam-1,Putri} of $G$, denoted by $\chi_{la}(G)$ ($\chi_{lat}(G)$), is defined as the minimum number of colors taken
over all the vertex-colorings of $G$ induced by the local antimagic (total) labelings of $G$. By the definition of the local antimagic (total) chromatic number,
it follows that $\chi_{la}(G)\geq \chi(G)$ ($\chi_{lat}(G)\geq \chi(G)$).

%Hartsfield and Ringel \cite{Hartsfield} introduced the concept of antimagic. A graph $G$ is called antimagic if $G$ has an antimagic labeling.
%A bijective $f: E(G)\rightarrow\{1,..., m\}$ is called an antimagic labeling of $G$ if for any two distinct vertices $u$ and $v$ of $G$, $\omega(u)=\omega(v)$, where $\omega(u)=\sum f(e)$ with $e$ ranging over all the edges incident to $u$.
%The most known unsolved problems encompass two conjectures. The one conjecture is that every connected graph other than $K_{2}$ is antimagic.
%The other conjecture is that every tree other than $K_{2}$ is antimagic.
Arumugam et al. in \cite{Arumugam-1} conjectured that every connected graph other than $K_{2}$ is local antimagic. They thought that it would be difficult to solve the conjecture for general connected graphs. So, they proposed a weaker conjecture that every tree other than $K_{2}$ is local antimagic. Later,
Haslegrave in \cite{Haslegrave} confirmed the conjecture in general, and moreover, determined the local antimagic chromatic number for every connected graph except $K_{2}$.
Arumugam et al. in \cite{Arumugam-1} also derived the upper and lower bound for $\chi_{la}(G \vee \overline{K_{2}})$, where $\overline{K_{2}}$ is the empty graph with only 2 vertices, or the complement of the complete graph $K_{2}$.
Then, Lau et al. in \cite{Lau-1} provided counterexamples to the lower bound of $\chi_{la}(G \vee \overline{K_{2}})$. Another counterexample was independently discovered by Shaebani in \cite{Shaebani}.
Arumugam et al. in \cite{Arumugam-3} gave the local antimagic chromatic number of several families of trees. The local antimagic chromatic number for the join of some graphs were determined in \cite{Lau-1,Lau-2,Lau-3,Lau-4,Lau-6,Yang-1,Yang-2,Premalatha}.
Concerning other kinds of graphic operations of two graphs, the result on lexicographic product of graphs $G$ and $\overline{K_{m}}$, is obtained by Lau et al. in \cite{Lau-5}.
Some scholars investigated the local antimagic chromatic numbers of some special graphs, including unicyclic graphs \cite{Nazula}, disjoint union of multiple copies of a graph \cite{Baca}, graph with certain number of pendants \cite{Lau-8}, spider graph \cite{Lau-9} and complete full $t$-ary trees \cite{Baca-2}.
The local antimagic total chromatic number has been examined for some graphs, including families of trees \cite{Putri}, graph $G\circ K_{2}$ \cite{Kurniawati-1}, graph $G\circ mK_{2}$ \cite{Kurniawati-2}, and other graphs \cite{Lau-7} such as bipartite graph, path, or Cartesian product of two cycles.
%the join graph \( G \) with a complete graph \( K_{2} \) as detailed in \cite{Kurniawati-1}, and the join of \( G \) with multiple \( K_{2} \) subgraphs as discussed in \cite{Kurniawati-2}. Furthermore, the chromatic numbers for bipartite graphs, paths, and the Cartesian product of two cycles have been explored in \cite{Lau-7}. This research provides a nuanced perspective on the local antimagic total chromatic properties of these diverse graph structures.
%The local antimagic total chromatic numbers of graphs can be seen, such as some families trees \cite{Putri}, the graph $G\circ K_{2}$ \cite{Kurniawati-1},  the graph $G\circ mK_{2}$ \cite{Kurniawati-2},  and the graphs \cite{Lau-7} that bipartite graphs, paths,  or the Cartesian product of two cycles.

A $k$-star, denoted by $S_{k}$, has order $k+1$ and size $k$, and $k$ pendant vertices.
The $(n,\,k)$-firecracker graph, denoted by $F_{n,k}$, is constructed by concatenating $n$ $k$-stars by linking one leaf from each. The edge-corona product of two graphs $G$ and $H$, denoted by $G\diamond H$, is formed by taking one copy of $G$ and $|E(G)|$ disjoint copies of $H$, with each copy assigned to an edge of $G$. For every edge $uv \in E(G)$, vertices $u$ and $v$ are joined to every vertex in the copy of $H$ associated with $uv$. For further details, see \cite{Hou,Yeh}. In this paper, we determine the local antimagic (total) chromatic number of firecracker graph $F_{n,k}$. Additionally, we provide the local antimagic chromatic number for edge-corona product graphs $G\diamond H$, where $G$ is a star $S_{k}$ or a double star $S_{k_{1},k_{2}}$, and $H$ is the empty graph $\overline{K_{r}}$ or the complete graph $K_{2}$.

\section{Preliminaries}

In this section we will list some notations to be used in this paper. Vectors are denoted by lowercase bold letters, and matrices are denoted by uppercase bold letters. Let $\boldsymbol{e}$ and $\boldsymbol{I}$ be the row vector and the diagonal matrix whose all entries are 1, respectively. The diagonal matrix with entries increasing and starting at 1 is denoted by $\boldsymbol{\Omega}$, and the null matrix with all entries the symbol $*$ is denoted by $\bigstar$. The space of all matrices with $m$ rows and $n$ columns is denoted by $\mathbb{R}^{m\times n}$. For the positive integers $a<b$, denote $[a,b]$ as the set of integers $\{c\in \mathbb{Z}|a\leq c\leq b\}$.%[a,b]=
%$\mathbb{R}^{m\times n}$ is denoted as the space of matrices with  rows and $n$ columns.

The firecracker graph $F_{n,k}$ is a graph of order $n+nk$ and size $n+nk-1$. At first we show you an example. The firecracker graph $F_{5,4}$ is depicted in Fig.\,\ref{fig1}. In general, the firecracker graph $F_{n,k}$ is given by $V(F_{n,k})=\{u_{i}|1\leq i\leq n\}\cup\{v_{i,j}|1\leq i\leq n,\,1\leq j\leq k\}$ and $E(F_{n,k})=\{v_{i,1}v_{i+1,1}|1\leq i\leq n-1\}\cup\{u_{i}v_{i,j}|1\leq i\leq n,\,1\leq j\leq k\}$ as the vertex set and edge set, respectively.

\begin{figure}[htbp]
\centering
\includegraphics[width=120mm]{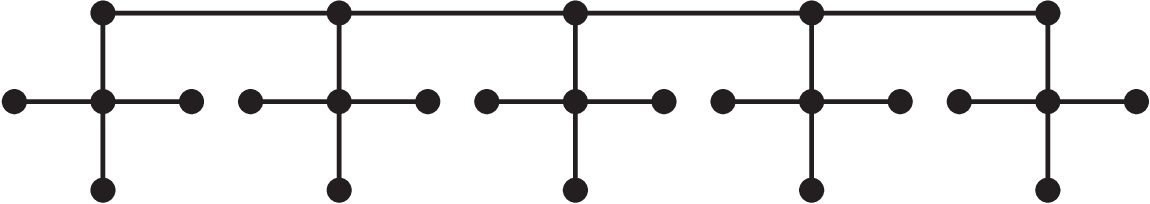}
\renewcommand{\figurename}{Fig.}
\caption{The firecracker graph $F_{5,4}$.}\label{fig1}
\end{figure}

For any two graphs $G$ and $H$, the edge-corona product graph $G\diamond H$ \cite{Hou,Yeh} is obtained by taking one copy of $G$ and $|E(G)|$ disjoint copies of $H$ one-to-one assigned to the edges of $G$, and for every edge $uv\in E(G)$ joining $u$ and $v$ to every vertex of the copy of $H$ associated to $uv$. As an example, the edge-corona product graph $P_{5}\diamond \overline{K_{3}}$ is showed in Fig.\,\ref{fig5}.

\begin{figure}[htbp]
	\centering
	\includegraphics[width=110mm]{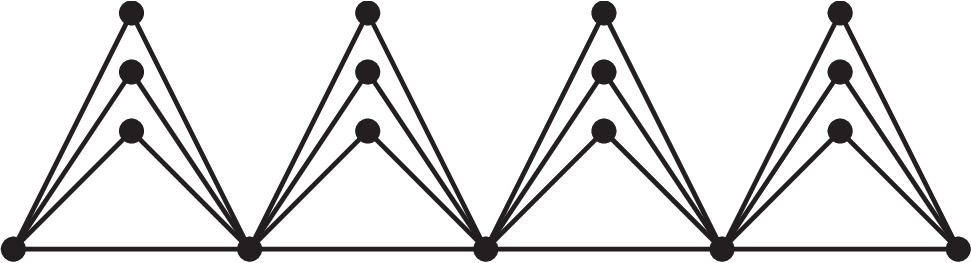}
	\renewcommand{\figurename}{Fig.}
	\caption{The edge-corona product graph $P_{5}\diamond \overline{K_{3}}$.}\label{fig5}
\end{figure}

%Let $f$ be a edge labeling of  graph $G$, then got the labeling of the edge labeling introduced in \cite{Gallian}.
Given a graph $G=(V(G),E(G))$ with $n$ vertices and $m$ edges, define $f: E(G)\rightarrow [1,m]$, where $f$ is a edge labeling of $G$. A labeling matrix \cite{Gallian} $\boldsymbol{\mathcal{M}}$ of $f$ for $G$ is an $n\times n$ symmetric matrix in which the $(i,j)$-entry of $\boldsymbol{\mathcal{M}}$ is $f(u_{i}u_{j})$ if $u_{i}u_{j}\in E(G)$ and is $*$ otherwise, and the labels are the numbers in the set $[1, m]$ and appear only once in the upper triangle of this labeling matrix. %If $f$ is a local antimagic labeling (shortened to {\it l.a.l.}) of $G$, then a labeling matrix of $f$ is called a local antimagic labeling matrix of $G$. Clearly, the $i$-th row sum, $\mathcal{R}_{i}$ (and $i$-th column sum) is $\omega_{f}(u_{i})$, where $*$ is treated as zero. Thus the condition of a labeling matrix being a local antimagic labeling matrix is the $i$-th row sum different the $i$-th row sum when $u_{i}u_{j}\in E(G)$. Then present the {\it l.a.c.n.} of the edge corona product of two graphs.

%The edge corona product of graphs $G$ and $H$, denoted by $G\diamond H$, is obtained by taking one copy of $G$ and $|E(G)|$ disjoint copies of $H$ one-to-one assigned to the edges of $G$, and for every edge $gg^{'} \in E(G)$ joining $g$ and $g'$ to every vertex of the copy of $H$ associated to $gg'$, see \cite{Hou,Yeh}.

Arumugam et al. \cite{Arumugam-1} got a lower bound of the local antimagic chromatic number for any tree $T$.
\begin{lemma}\cite{Arumugam-1}\,\label{th} For a tree $T$ with $l$ leafs, we have $\chi_{la}(T)\geq l+1$.
\end{lemma}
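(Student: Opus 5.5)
The plan is to exhibit $l+1$ vertices of $T$ that must receive pairwise distinct colors under \emph{every} local antimagic labeling $f$ of $T$; this at once gives $\chi_{la}(T)\geq l+1$. Throughout, $T$ has $m$ edges and $T\neq K_{2}$, since $K_{2}$ admits no local antimagic labeling. Hence $T$ has at least three vertices, and every leaf is adjacent to a vertex of degree at least $2$.

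\emph{Step 1: the leaves get distinct colors.} Let $x_{1},\dots,x_{l}$ be the leaves of $T$, and let $e_{i}$ be the unique edge at $x_{i}$. Then $\omega(x_{i})=f(e_{i})$. The edges $e_{1},\dots,e_{l}$ are distinct: two leaves sharing an edge would form a $K_{2}$ component, which is impossible since $T$ is connected and not $K_{2}$. Because $f$ is a bijection, the values $\omega(x_{1}),\dots,\omega(x_{l})$ are pairwise distinct. Each of them is also at most $m$, since every label lies in $[1,m]$. So the leaves already use $l$ distinct colors, all in $[1,m]$.

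\emph{Step 2: one more vertex has a strictly larger weight.} Let $e=uv$ be the edge with $f(e)=m$. I claim that some non-leaf endpoint of $e$ has weight greater than $m$. At least one endpoint of $e$ is not a leaf, for otherwise $T=K_{2}$; call it $u$. Then $\deg(u)\geq 2$, so $u$ is incident with $e$ and with at least one other edge $e'$. This gives
\[
\omega(u)\geq f(e)+f(e')\geq m+1 .
\]
Hence $\omega(u)$ is strictly larger than every leaf weight from Step 1, and so differs from all of them. Together, $x_{1},\dots,x_{l},u$ carry $l+1$ distinct colors. Since $f$ was an arbitrary local antimagic labeling, the lemma follows.

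There is no serious obstacle in this argument. The only points needing care are the degenerate ones: excluding $K_{2}$, checking that distinct leaves have distinct pendant edges, and noting that the edge labeled $m$ always has a non-leaf endpoint. Once these are settled, the bound is just the comparison $\omega(u)>m\geq\omega(x_{i})$ for every leaf $x_{i}$.
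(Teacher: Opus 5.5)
Your argument is correct, and it is essentially the standard proof from the cited source. The paper itself gives no proof and only cites Arumugam et al., whose argument is the same as yours: the $l$ pendant edges give the leaves $l$ pairwise distinct weights in $[1,m]$, and a non-leaf endpoint of the edge labeled $m$ has weight exceeding $m$, which supplies the $(l+1)$-st color.
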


\section{Main results}

This section determines the local antimagic (total) chromatic number of firecracker graph $F_{n,k}$. Subsequently, we proceed to present the local antimagic chromatic number of edge-corona product graph $G\diamond H$, where $G$ is a star $S_{k}$ and a double star $S_{k_{1},k_{2}}$, and $H$ is the empty graph $\overline{K_{r}}$ and the complete graph $K_{2}$.

\subsection{The local antimagic (total) chromatic number of firecracker graph $F_{n,k}$}

%The Local Antimagic Chromatic Number of the Firecracker Graph
%$\chi_{la}(F_{n,k})$ and $\chi_{lat}(F_{n,k})$
Consider the firecracker graph $F_{n,k}$.
For $n=1$, it is a star, yielding that $\chi_{la}(S_{k})=k+1$ from Arumugam et al. \cite{Arumugam-1}.
When $k=1$, it becomes the corona-product graph of the path $P_{n}$ and empty graph $\overline{K_{1}}$, denoted by $P_{n}\circ \overline{K_{1}}$, with $\chi_{la}(P_{n}\circ \overline{K_{1}})=n+2$ from \cite{Arumugam-3}. Therefore, we focus on the local antimagic chromatic number of $F_{n,k}$ for $n\geq 2,\,k\geq 2$.%by Arumugam et al.

\begin{theorem}
For firecracker graph $F_{n,k}$ with $n\geq 2,\,k\geq 2$, the local antimagic chromatic number of $F_{n,k}$ is
\begin{equation*}
\chi_{la}(F_{n,k})=nk-n+1.
\end{equation*}
\end{theorem}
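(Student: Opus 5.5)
The lower bound comes from Lemma \ref{th}. In $F_{n,k}$ with $n\ge 2$ and $k\ge 2$, a vertex is a leaf exactly when it is some $v_{i,j}$ with $2\le j\le k$. The vertices $u_i$ have degree $k\ge 2$, and each $v_{i,1}$ is joined to $u_i$ and to at least one $v_{i\pm1,1}$. So $F_{n,k}$ is a tree with exactly $n(k-1)$ leaves, and Lemma \ref{th} gives $\chi_{la}(F_{n,k})\ge nk-n+1$. The real work is a matching construction.

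\emph{Structure of the labeling to aim for.} In any local antimagic labeling $f$, the weight of the leaf $v_{i,j}$ is $f(u_iv_{i,j})$. Hence the $n(k-1)$ leaves always receive pairwise distinct colors, and only one further color may be used on the $2n$ internal vertices. Since $u_i$ is adjacent to $v_{i,1}$, the two cannot both carry that extra color. I plan to give every $u_i$ the same weight $W$, larger than every edge label, which will be the one new color. Then I will make each $v_{i,1}$ have weight equal to a pendant label used in a \emph{different} star. This reuse is legal because $v_{i,1}$ is adjacent to no leaf. With these choices, the conditions reduce to the following.
\begin{itemize}
\item[(a)] $\sum_{j=1}^{k} f(u_iv_{i,j})=W$ for all $i$.
\item[(b)] $\omega(v_{i,1})=f(u_iv_{i,1})+f(v_{i-1,1}v_{i,1})+f(v_{i,1}v_{i+1,1})$ belongs to the set of pendant labels, for every $i$. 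Here the path terms are absent at $i=1$ and $i=n$.
\item[(c)] $\omega(v_{i,1})\ne\omega(v_{i+1,1})$, and $\omega(v_{i,1})\ne W$.
\end{itemize}
The condition $\omega(v_{i,1})\neq W$ is automatic once $W$ exceeds all labels. The condition $\omega(u_i)=W\ne\omega(v_{i,1})$ then holds as well.

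\emph{Concrete plan.} I would give the path edges $v_{i,1}v_{i+1,1}$ the smallest labels $1,\dots,n-1$, in an order to be tuned, say alternating. I would give the spokes $u_iv_{i,1}$ a block of $n$ labels placed just below the pendant labels. The pendant edges then receive the top $n(k-1)$ labels. This way each $\omega(v_{i,1})$, which is a spoke label plus at most about $2n$, lands inside the pendant range. Next I arrange the $n\times k$ array with rows $(f(u_iv_{i,1}),\dots,f(u_iv_{i,k}))$ so that all row sums are equal. To do this, I treat the pendant labels as a $n\times(k-1)$ array filled in the standard magic-rectangle or zig-zag fashion, with columns alternately increasing and decreasing. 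I then use the spoke column to correct the remaining row-sum differences. Finally I check that the resulting values $\omega(v_{i,1})$ are pendant labels and that consecutive ones differ. Writing the labeling as a labeling matrix, or simply as the $n\times k$ array, in the notation of the Preliminaries makes the row sums transparent.

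\emph{Main obstacle.} The difficult step is satisfying (a) and (b) at the same time. Equal row sums force a rigid pattern on the pendant columns, and the spoke column is then essentially determined. We must still arrange that each spoke plus its incident path labels hits a pendant value. I expect to split into cases by the parities of $n$ and $k$, since balanced $n\times(k-1)$ arrangements exist cleanly only in some parity classes. In the bad cases I would enlarge the allowed shifts, for instance by swapping a few path labels, or by letting the spoke block interleave with the lowest pendant labels rather than sit strictly below them. The small cases $k=2$ and $k=3$, where the pendant array has only one or two columns and leaves little freedom, will likely need separate explicit labelings, checked directly.
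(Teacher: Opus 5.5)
\textbf{This is a genuine gap: the upper bound is never constructed.} Your lower bound is correct. Your reduction to (a)--(c) is also correct, and it is exactly the design the paper uses, with one exception noted below: all $\omega(u_i)$ equal and above every label, each $\omega(v_{i,1})$ equal to some pendant label, path and spoke edges on the smallest labels, and the later pendant columns filled in increasing/decreasing pairs such as $jn-1+i$, $jn+n-i$. But the upper bound is the entire content of the theorem beyond the leaf lemma, and you never produce a labeling, only a search plan. Moreover, the plan breaks in specific places where ``tuning'' is not enough.

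\emph{Path labels $\{1,\dots,n-1\}$ are impossible for $n$ even, $k$ odd.} If the path edges get exactly these labels, summing (a) over $i$ gives $W=\frac{k(nk+2n-1)}{2}$, which is not an integer when $n$ is even and $k$ is odd. Reordering the path labels, or interleaving spokes with pendant labels, does not change the path sum. So some path edge must carry a label $\geq n$. The paper puts $\frac{3n-2}{2}$ on $v_{1,1}v_{2,1}$ and mixes path and spoke labels inside $[1,2n-1]$; for $n=2$ it uses path label $2$.

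\emph{The spoke column cannot correct anything when $k$ is odd.} There are then $k-1$ pendant columns, an even number, so a zig-zag or magic-rectangle filling already equalizes the pendant row sums. A column of distinct spoke labels can then only destroy equality. What works is a three-column block, the spoke column plus two pendant columns, with equal row sums, and zig-zag pairs only for $j\geq 4$. This is the paper's Tab.~1 for odd $n$, with a separate explicit array for even $n$. For even $k$ your correction idea does work: for $n\geq 4$ the paper makes $f(u_iv_{i,1})+f(u_iv_{i,2})=4n-1$.

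\emph{For $F_{3,2}$ the system (a)--(c) has no solution at all.} Here $3W+l_1+l_2=36$ with $W\geq 9$ forces $l_1+l_2\in\{3,6,9\}$. The value $9$ gives $\omega(v_{2,1})\geq 10$, which exceeds every label. The values $3$ and $6$ fail (b) or (c) after a short case check. So in this case you must give up (a). The paper's labeling of $F_{3,2}$ has $\omega(u_1)=\omega(u_3)=12$ but $\omega(u_2)=7$, which is the pendant label of $u_3v_{3,2}$.

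\emph{The path-label order still has to be found.} Even where your scheme is viable, you must order the path labels so that every $\omega(v_{i,1})$ lands on a pendant label and neighbouring values differ. The paper does this with an explicit, essentially alternating, order of the path labels, checked case by case according to the parities of $n$ and $k$.
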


\begin{proof}
It is easy to see that firecracker graph $F_{n,k}$ has $n(k-1)$ leafs. By Lemma \ref{th}, we establish that $\chi_{la}(F_{n,k})\geq n(k-1)+1$. To determine the precise value of $\chi_{la}(F_{n,k})$, the next step is to obtain a local antimagic labeling $f$ of $F_{n,k}$,
such that $f$ uses exactly $nk-n+1$ colors.%the minimum number of colors taken all over all colorings induced by the local antimagic labelings of $F_{n,k}$.

Define a bijection $f:E(F_{n,k})\rightarrow [1,nk+n-1]$. Consider the following two cases based on the parity of $k$.

Case 1. $k$ is odd.

This case is further divided into two subcases based on the parity of $n$.

Subcase 1.1. $n$ is odd.

For $n=3$, we label the edges of the subgraph $F_{3,3}$ of $F_{3,k}$ $(k\geq 3)$ as follows:
\begin{equation*}
\begin{array}{llll}
f(u_{1}v_{1,1})=3, &f(u_{1}v_{1,2})=7,
&f(u_{1}v_{1,3})=11, &f(v_{1,1}v_{2,1})=2,\\[3pt]
f(u_{2}v_{2,1})=6, &f(u_{2}v_{2,2})=5,
&f(u_{2}v_{2,3})=10,&f(v_{2,1}v_{3,1})=1,\\[3pt]
f(u_{3}v_{3,1})=4, &f(u_{3}v_{3,2})=8,&f(u_{3}v_{3,3})=9.
%f(v_{1,1}v_{2,1})=2, &f(v_{2,1}v_{3,1})=1.
\end{array}
\end{equation*}
Next, label the edges $u_{i}v_{i,j}$ for $1\leq i\leq n$ and $4\leq j\leq k$ with
\begin{equation*}
f(u_{i}v_{i,j})=\left\{\begin{array}{ll}
3j-1+i,   &\text{if $j$ even},\\[3pt]
3j+3-i, &\text{if $j$ odd}.
\end{array}\right.
\end{equation*}
By the above labeling, vertices are colored by the sum of the labels of the incident edges. We find that $\omega(v_{1,1})=\omega(v_{3,1})=5$, $\omega(v_{2,1})=9$, $\omega(u_{i})=\frac{3k^{2}+5k}{2}>3k+2$ for $1\leq i\leq n$, and $5\leq\omega(v_{i,j})\leq 3k+2$ for $1\leq i\leq n$ and $2\leq j\leq k$.
The colors of $3k-2$ pendant vertices are distinct, and we have $\omega(v_{1,1})=\omega(v_{3,1})=\omega(v_{2,2})=5$,
$\omega(v_{2,1})=\omega(v_{3,3})=9$,
thus the local antimagic labeling $f$ utilizes $3k-2$ colors.
As a result, $\chi_{la}(F_{3,k})=3k-2$ for $k$ odd.
%\begin{equation*}
%\begin{array}{l}
%\omega(v_{1,1})=\omega(v_{3,1})=5,\hspace{2em}\omega(v_{2,1})=9,\\[3pt]
%\omega(u_{i})={\frac{3k^{2}+5k}{2}}>3k+2~\text{for}~1\leq i\leq n,\\[6pt]
%5\leq\omega(v_{i,j})\leq 3k+2~\text{for}~1\leq i\leq n~\text{and}~2\leq j\leq k.
%\end{array}
%\end{equation*}

If $n\geq 5$, the local antimagic chromatic number $\chi_{la}(F_{n,k})$ is computed using the following formula.
\begin{equation*}
\begin{array}{l}
f(v_{i,1}v_{i+1,1})=\left\{\begin{array}{ll}
\frac{n-i}{2},  &\text{if } 1\leq i\leq n-1~\text{and}~i~\text{odd},\\[4pt]
n-\frac{i}{2}, &\text{if } 1\leq i\leq n-1~\text{and}~i~\text{even}.
\end{array}\right.\vspace{6pt}\\
f(u_{i}v_{i,1})=\left\{\begin{array}{ll}
2n-1, &\text{if } i=1,\\[1pt]
n-1+i,&\text{if } 2\leq i\leq n~\text{and}~i~\text{even},\\[1pt]
n-3+i,&\text{if } 2\leq i\leq n~\text{and}~i~\text{odd}.
\end{array}\right.
\end{array}
\end{equation*}
Subsequently, the edges $u_{i}v_{i,2}$ and $u_{i}v_{i,3}$ are labeled based on the labels of $u_{i}v_{i,1}$ as shown in Tab.\,1.
%in accordance with the column in the Tab.\,1. the edges $u_{i}v_{i,2}$ and $u_{i}v_{i,3}$ are progressively labeled based on the labels assigned to edges $u_{i}v_{i,1}$.
\begin{table}[htbp]\label{T1}
\renewcommand\arraystretch{1.8}
\renewcommand{\tablename}{Tab.}
\caption{The labels of edges $u_{i}v_{i,j}$ $(1\leq i\leq n,\,j=1,2,3)$ in $F_{n,k}$}
\vspace{0.5em}
\centering
\scriptsize
\begin{tabular}{|c|c|c|c|c|c|c|c|c|c|c|c|c|c|}\hline
 % \multicolumn{14}{|c|}{{\normalsize{The values of $f(u_{i}v_{i,j})$ for $1\leq i\leq n,\,j=1,2,3$}}}\\\hline
 \diagbox{$j$}{$i$} &$1$&$2$&\multicolumn{2}{c|}{$\cdots$} &$\frac{n-3}{2}$ &$\frac{n-1}{2}$ &$\frac{n+1}{2}$&$\frac{n+3}{2}$&$\frac{n+5}{2}$&\multicolumn{2}{c|}{$\cdots$} &$n-1$ &$n$\\\hline
   $1$ &$n$ &$n+1$ &\multicolumn{2}{c|}{$\cdots$} &$\frac{3n-5}{2}$ &$\frac{3n-3}{2}$ &$\frac{3n-1}{2}$
   &$\frac{3n+1}{2}$  &$\frac{3n+3}{2}$
&\multicolumn{2}{c|}{$\cdots$} &$2n$ &$2n-1$\\\hline

$2$ &$3n-2$ &$3n-4$ &\multicolumn{2}{c|}{$\cdots$} &$2n+3$ &$2n+1$ &$3n-1$
   &$3n-3$  &$3n-5$
&\multicolumn{2}{c|}{$\cdots$} &$2n+2$ &$2n$\\\hline

$3$ &$\frac{7n+1}{2}$ &$\frac{7n+3}{2}$ &\multicolumn{2}{c|}{$\cdots$} &$4n-2$ &$4n-1$ &$3n$
   &$3n+1$  &$3n+2$
&\multicolumn{2}{c|}{$\cdots$} &$\frac{7n-3}{2}$ &$\frac{7n-1}{2}$\\\hline
\end{tabular}
\end{table}

\noindent Proceed to label the edges $u_{i}v_{i,j}$ for $1\leq i\leq n$ and $4\leq j\leq k$.
\begin{equation*}
f(u_{i}v_{i,j})=\left\{\begin{array}{ll}
jn-1+i, &\text{if } j~\text{even},\\[1pt]
jn+n-i, &\text{if } j~\text{odd}.
\end{array}\right.
\end{equation*}
Consequently, we have $\omega(u_{i})=\frac{k(kn+2n-1)}{2}>nk+n-1$ for $1\leq i\leq n$,  $\omega(v_{1,1})=\frac{5n-3}{2}$,  $\omega(v_{i,1})=\frac{5n-1}{2}$ for $2\leq i\leq n$ and even $i$, $\omega(v_{i,1})=\frac{5n-5}{2}$ for $2\leq i\leq n$ and odd $i$,  $2n\leq\omega(v_{i,j})\leq nk+n-1$ for $1\leq i\leq n$ and $2\leq j\leq k$.
It is evident that $\omega(v_{i,1})$ must coincide with some colors $\omega(v_{i,j})$ of pendant vertices. For each pair of adjacent vertices $v_{i,1}$ and $v_{i+1,1}$, distinct colors are used. This leads to the conclusion that $\chi_{la}(F_{n,k})\leq nk-n+1$ for $k$ odd and $n\geq 5$.

%\begin{equation*}
%\begin{array}{l}
%\omega(u_{i})=\frac{k(kn+2n-1)}{2}>nk+n-1\text{ for } 1\leq i\leq n.\vspace{6pt}\\
%\omega(v_{i,1})=\left\{\begin{array}{ll}
%\frac{5n-3}{2}, &\text{for } i=1,\\[6pt]
%\frac{5n-1}{2}, &\text{for } 2\leq i\leq n~\text{and}~i~\text{even},\\[6pt]
%\frac{5n-5}{2}, &\text{for } 2\leq i\leq n~\text{and}~i~\text{odd}.
%\end{array}\right.\\
%\\[-9pt]
%2n\leq\omega(v_{i,j})\leq nk+n-1, \hspace{1em}1\leq i\leq n~\text{and}~2\leq j\leq k.
%\end{array}
%\end{equation*}

Subcase 1.2. $n$ is even.

When $n=2$ and $k\geq 3$, the local antimagic labeling of $F_{2,k}$ depicted in Fig.\,\ref{fig4}, is derived by
\begin{equation*}
\begin{array}{l}
f(u_{1}v_{1,1})=1, \hspace{1em}f(u_{2}v_{2,1})=4, \hspace{1em}f(u_{1}v_{1,2})=5, \hspace{1em}f(u_{2}v_{2,2})=3,\\[3pt]
f(u_{1}v_{1,3})=7, \hspace{1em}f(u_{2}v_{2,1})=6, \hspace{1em}f(v_{1,1}v_{2,1})=2,\\[3pt]
f(u_{i}v_{i,j})=\left\{\begin{array}{ll}
jn-1+i, &\text{if }1\leq i\leq n,~4\leq j\leq k~\text{and}~j~\text{even},\\[3pt]
jn+n-i, &\text{if }1\leq i\leq n,~4\leq j\leq k~\text{and}~j~\text{odd}.
\end{array}\right.
\end{array}
\end{equation*}

\begin{figure}[htbp]
\centering
\includegraphics[height=60mm]{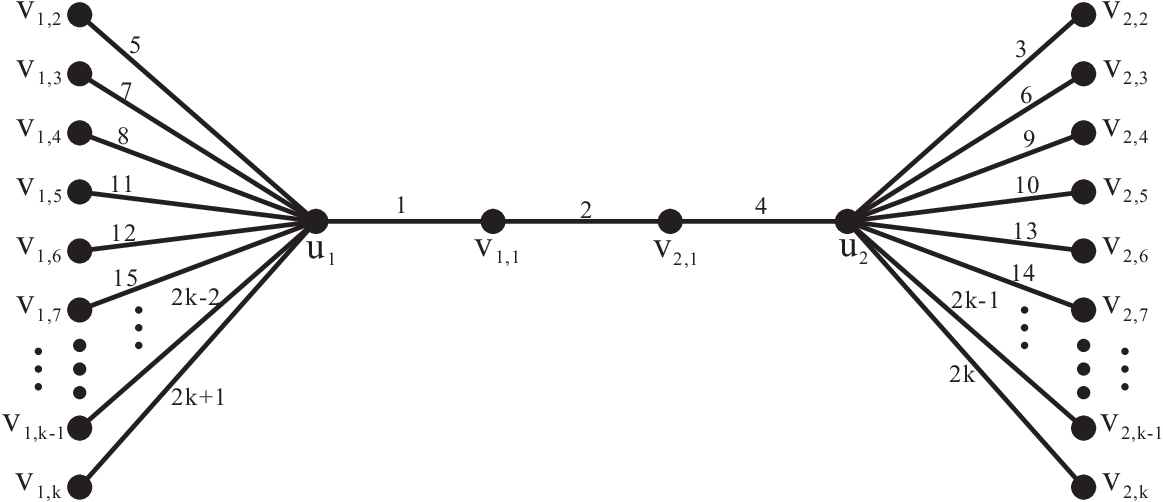}
\renewcommand{\figurename}{Fig.}
\caption{The local antimagic labeling of $F_{2,k}$ for $k\geq 3$.}\label{fig4}
\end{figure}
\noindent We immediately establish that $\omega(v_{1,1})=3,\,\omega(v_{2,1})=6$, $\omega(u_{1})=\omega(u_{2})=\frac{2k^{2}+3k-1}{2}$ and $3\leq \omega(v_{i,j})\leq 2k+1$ for $1\leq i\leq n,~2\leq j\leq k$.
It is demonstrated that $f$ is a local antimagic labeling using $2k-1$ distinct colors, thereby $\chi_{la}(F_{2,k})=2k-1$ for $k$ odd.

For $n\geq 4$, continue labeling the edges of $F_{n,k}$, where $n$ is even and $k$ odd, by
\begin{equation*}
f(v_{i,1}v_{i+1,1})=\left\{\begin{array}{ll}
{\frac{3n-2}{2}},  &\text{if }i=1,\\[3pt]
{\frac{2n-2-i}{2}}, &\text{if }2\leq i\leq n-1~\text{and}~i~\text{even}.\\[3pt]
{\frac{n-i+1}{2}}, &\text{if }2\leq i\leq n-1~\text{and}~i~\text{odd}.
\end{array}\right.
\end{equation*}
\begin{equation*}
f(u_{i}v_{i,j})=\left\{\begin{array}{ll}
{\frac{3n-4-2i}{2}}, &\text{if }1\leq i<\frac{n}{2}~\text{and}~j=1,\\[3pt]
{\frac{3n-4}{2}},   &\text{if }i=\frac{n}{2}~\text{and}~j=1,\\[3pt]
{\frac{5n-2-2i}{2}}, &\text{if }\frac{n}{2}<i<n~\text{and}~j=1,\\[3pt]
2n-1, &\text{if }i=n~\text{and}~j=1.\\
2n+1+2i, &\text{if }1\leq i<\frac{n}{2}~\text{and}~j=2,\\
2n+1,    &\text{if }i=\frac{n}{2}~\text{and}~j=2,\\
n+2i,    &\text{if }\frac{n}{2}<i<n~\text{and}~j=2,\\
2n,      &\text{if }i=n~\text{and}~j=2.\\
4n-1-i, &\text{if }1\leq i<\frac{n}{2},~\frac{n}{2}<i<n~\text{and}~j=3,\\
4n-1,    &\text{if }i=\frac{n}{2}~\text{and}~j=3,\\[1pt]
\frac{7n-2}{2},      &\text{if }i=n~\text{and}~j=3.\\[1pt]
jn-1+i, &\text{if }1\leq i\leq n,~4\leq j\leq k~\text{and}~j~\text{even},\\
jn+n-i, &\text{if }1\leq i\leq n,~4\leq j\leq k~\text{and}~j~\text{odd}.
\end{array}\right.
\end{equation*}
Accordingly, we obtain that $\omega(u_{i})=\frac{k^{2}n+2kn-k-1}{2}>nk+n-1$ for $1\leq i\leq n$,  $\omega(v_{1,1})=3n-4$, $\omega(v_{2,1})=4n-7$, $\omega(v_{i,1})=3n-2-2i$ for $2<i<\frac{n}{2}$, and $\omega(v_{i,1})=4n-1-2i$ for $\frac{n}{2}<i<n$, $\omega(v_{\frac{n}{2},1})=\frac{5n-4}{2}$,
$\omega(v_{n,1})=2n$, as well as $2n\leq \omega(v_{i,j})\leq nk+n-1$ for $1\leq i\leq n$ and $2\leq j\leq k$.
Thus, $\omega(v_{i,1})$ equals certain $\omega(v_{i,j})$, and the adjacent vertices $v_{i,1}$ receive distinct colors. This implies that $\chi_{la}(F_{n,k})\leq nk-n+1$ for $n$ even and $k$ odd.
%the value of $\omega(v_{i,1})$ must be equal to the value of certain $\omega(v_{i,j})$, and each two adjacent vertices $v_{i,1}$ (where $1\leq i\leq n$) is assigned a different color. Therefore, the local antimagic labeling $f$ has $nk-n+1$ colors, indicating $\chi_{la}(F_{n,k})\leq nk-n+1$ for $n$ even and $k$ odd.
%$\omega(v_{1,1})=$
%\begin{equation*}
%\begin{array}{l}
%\omega(u_{i})=\frac{k^{2}n+2kn-k-1}{2}>nk+n-1\text{ for }1\leq i\leq n,\vspace{5pt}\\
%\omega(v_{i,1})=\left\{\begin{array}{ll}
%3n-4, &\text{for }i=1,\\
%4n-7, &\text{for }i=2,\\
%3n-2-2i, &\text{for }2<i<\frac{n}{2},\\[1pt]
%\frac{5n-4}{2}, &\text{for }i=\frac{n}{2},\\[1pt]
%4n-1-2i, &\text{for }\frac{n}{2}<i<n,\\
%2n, &\text{for }i=n.
%\end{array}\right.\vspace{5pt}\\
%2n\leq \omega(v_{i,j})\leq nk+n-1\text{ for }1\leq i\leq n~\text{and}~2\leq j\leq k.
%\end{array}
%\end{equation*}

In summary, we conclude that $\chi_{la}(F_{n,k})=nk-n+1$ for odd $k$.

Case 2. $k$ is even.

The edges $\{v_{i,1}v_{i+1,1}|1\leq i\leq n-1\}\cup\{u_{i}v_{i,1},u_{i}v_{i,2}|1\leq i\leq n\}$ are initially labeled by $[1,3n-1]$. We further divide the discussion based on the parity of $n$ into two subcases.

Subcase 2.1. $n$ is odd.

When $n=3$ and $k=2$, the local antimagic labeling of $F_{3,2}$ is given in Fig.\,\ref{fig2}, i.e.,
\begin{equation*}
\begin{array}{llll}
f(u_{1}v_{1,1})=4, &f(u_{1}v_{1,2})=8,&f(u_{2}v_{2,1})=1, & f(u_{2}v_{2,2})=6,\\[3pt]
f(u_{3}v_{3,1})=5, &f(u_{3}v_{3,2})=7,&f(v_{1,1}v_{2,1})=3, &f(v_{2,1}v_{3,1})=2.
\end{array}
\end{equation*}
Hence, $\chi_{la}(F_{3,2})=4$.

\begin{figure}[htbp]
\centering
\includegraphics[width=70mm]{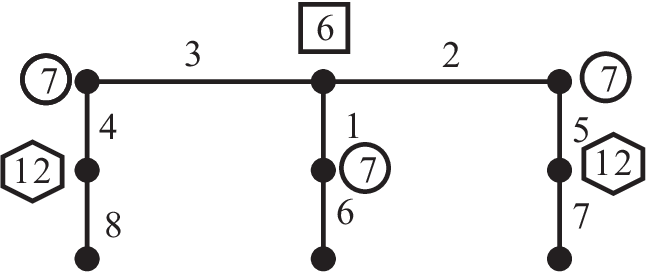}
\renewcommand{\figurename}{Fig.}
\caption{The local antimagic labeling of $F_{3,2}$.}\label{fig2}
\end{figure}

Considering $k\geq 4$, label the edges of the subgraph $F_{3,4}$ within $F_{3,k}$ in the following way,
\begin{equation*}
\begin{array}{l}
\begin{array}{llll}
f(u_{1}v_{1,1})=3, &f(u_{2}v_{2,1})=4,
&f(u_{3}v_{3,1})=8, &f(v_{1,1}v_{2,1})=2,\\[3pt]
f(u_{1}v_{1,2})=6, &f(u_{2}v_{2,2})=7,
&f(u_{3}v_{3,2})=5, &f(v_{2,1}v_{3,1})=1,\\[3pt]
f(u_{1}v_{1,3})=11, &f(u_{2}v_{2,3})=10,
&f(u_{3}v_{3,3})=9,&\\[3pt]
f(u_{1}v_{1,4})=14, &f(u_{2}v_{2,4})=13,
&f(u_{3}v_{3,2})=12. &
\end{array}\vspace{5pt}\\
f(u_{i}v_{i,j})=\left\{\begin{array}{ll}
3j-2+2i, &\text{if }1\leq i\leq n,\,5\leq j\leq k~\text{and}~j~\text{odd},\\
3j+4-2i, &\text{if }1\leq i\leq n,\,5\leq j\leq k~\text{and}~j~\text{even}.
\end{array}\right.
\end{array}
\end{equation*}
Then we get that $\omega(u_{i})=\frac{3k^{2}+6k-k}{2}$ for $1\leq i\leq n$, $\omega(v_{1,1})=5$, $\omega(v_{2,1})=7$, and $\omega(v_{3,1})=9$, as well as $5\leq \omega(v_{i,j})\leq 3k+2$ for $1\leq i\leq n$ and $2\leq j\leq k$.
It is observed that the local antimagic labeling $f$ utilizes $3k-2$ colors. Thus $\chi_{la}(F_{3,k})=3k-2$ when $k$ is even.

For $n\geq 5$, let's start labeling the edges $v_{i,1}v_{i+1,1}$ for $1\leq i\leq n-1$ by
\begin{equation*}
f(v_{i,1}v_{i+1,1})=\left\{\begin{array}{ll}
\frac{n-i}{2},  &\text{if $1\leq i\leq n-1$~and~$i$~odd},\\[3pt]
n-\frac{i}{2}, &\text{if $1\leq i\leq n-1$~and~$i$~even}.
\end{array}\right.
\end{equation*}
Then, the remaining edges are labeled by
\begin{equation*}
f(u_{i}v_{i,j})=\left\{\begin{array}{ll}
2n-1, &\text{if }i=1~\text{and}~j=1,\\
n-3+i,  &\text{if }3\leq i\leq n,~i~\text{odd~and}~j=1,\\
n-1+i,  &\text{if }2\leq i\leq n,~i~\text{even~and}~j=1,\\
2n, &\text{if }i=1~\text{and}~j=2,\\
3n+2-i,  &\text{if }3\leq i\leq n,~i~\text{odd~and}~j=2,\\
3n-i,    &\text{if }2\leq i\leq n,~i~\text{even~and}~j=2,\\
%3j-2+2i, &1\leq i\leq n,\,5\leq j\leq k~\text{and}~j~\text{odd},\\
%3j+4-2i, &1\leq i\leq n,\,5\leq j\leq k~\text{and
%}~j~\text{even}.
jn-2+2i, &\text{if $1\leq i\leq n,\,3\leq j\leq k$~and~$j$~odd},\\
(j+1)n+1-2i, &\text{if $1\leq i\leq n,\,3\leq j\leq k$~and~$j$~even}.
\end{array}\right.
\end{equation*}
From this, we find that $\omega(u_{i})=\frac{k^{2}n+2kn-k}{2}$ for $1\leq i\leq n$, $\omega(v_{1,1})=\frac{5n-3}{2}$, $\omega(v_{i,1})=\frac{5n-1}{2}$ for even $i$ and $\omega(v_{i,1})=\frac{5n-5}{2}$ for odd $i$ when $2\leq i\leq n$ , as well as $2n\leq \omega(v_{i,j})\leq nk+n-1$ for $1\leq i\leq n$ and $2\leq j\leq k$. Therefore, the local antimagic labeling $f$ uses $nk-n+1$ colors, and $\chi_{la}(F_{n,k})\leq nk-n+1$ for $n\geq 5$.

In this subcase, we have $\chi_{la}(F_{n,k})=nk-n+1$ for $n$ odd and $k$ even.
%\begin{equation*}
%\begin{array}{l}
%\omega(u_{i})=\frac{k^{2}n+2kn-k}{2}, \hspace{2em} 1\leq i\leq n,\vspace{5pt}\\
%\omega(v_{i,1})=\left\{\begin{array}{ll}
%\frac{5n-3}{2}, & i=1,\\[3pt]
%\frac{5n-1}{2}, &2\leq i\leq n~\text{and}~i~\text{even},\\[3pt]
%\frac{5n-5}{2}, &2\leq i\leq n~\text{and}~i~\text{odd}.
%\end{array}\right.\vspace{5pt}\\
%2n\leq \omega(v_{i,j})\leq nk+n-1, \hspace{1em}1\leq i\leq n~\text{and}~\,2\leq j\leq k.
%\end{array}
%\end{equation*}

Subcase 2.2. $n$ is even.

When $n=2$ and $k=2$, $F_{2,2}$ is also the path $P_{6}$. It follows that $\chi_{la}(F_{2,2})=\chi_{la}(P_{6})=3$ from Arumugam et al. \cite{Arumugam-1}. The local antimagic labeling of $F_{2,k}$ is shown in Fig. \ref{fig3}, indicating $\chi_{la}(F_{2,k})=2k-1$ for even $k$.
%We give the different local antimagic labeling of $F_{2,2}\,(P_{6})$ with Arumugam et al. that shown in Fig. \ref{fig3}.

\begin{figure}[htbp]
\centering
\includegraphics[height=50mm]{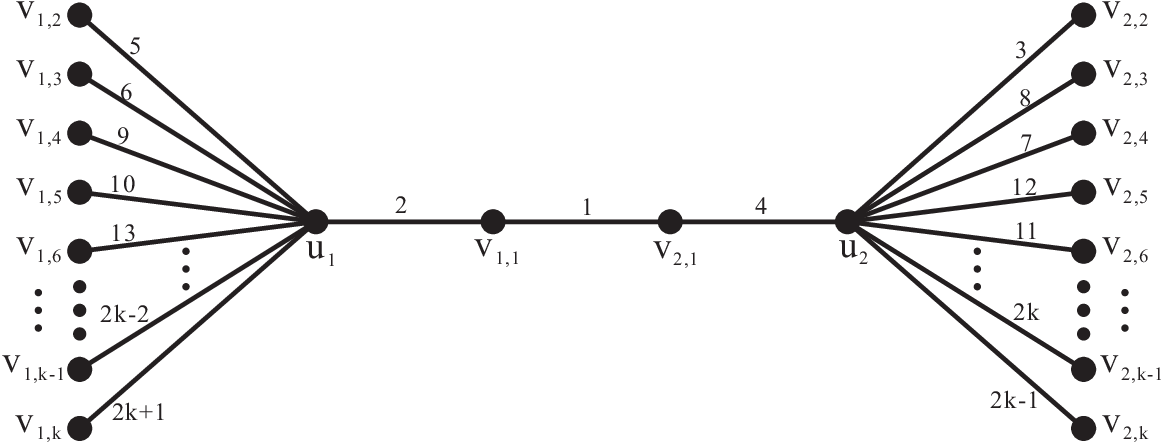}
\renewcommand{\figurename}{Fig.}
\caption{The local antimagic labeling of $F_{2,k}$.}\label{fig3}
\end{figure}

For $n=4$, the local antimagic labeling of $F_{4,k}$ is detailed as follows. Label the edges of the subgraph $F_{4,2}$ within $F_{4,k}$ by
\begin{equation*}
\begin{array}{llll}
f(u_{1}v_{1,1})=7, &f(u_{1}v_{1,2})=8,&f(u_{2}v_{2,1})=6, & f(u_{2}v_{2,2})=9,\\[5pt]
f(u_{3}v_{3,1})=4, &f(u_{3}v_{3,2})=11,&f(u_{4}v_{4,1})=10, &f(u_{4}v_{4,2})=5,\\[5pt]
f(v_{1,1}v_{2,1})=2, &f(v_{2,1}v_{3,1})=3,&f(v_{3,1}v_{4,1})=1. &
\end{array}
\end{equation*}
Then, label the remaining edges of $F_{4,k}$ by
\begin{equation*}
f(u_{i}v_{i,j})=\left\{
\begin{array}{ll}
4j-2+2i,   &\text{if }1\leq i\leq n,\,3\leq j\leq k~\text{and}~j~\text{odd},\\[3pt]
4j+5-2i, &\text{if }1\leq i\leq n,\,3\leq j\leq k~\text{and}~j~\text{even}.
\end{array}
\right.
\end{equation*}
Correspondingly, we find that $\omega(v_{1,1})=\omega(v_{2,2})=9$, $\omega(v_{2,1})=\omega(v_{4,1})=\omega(v_{3,2})=11$, $\omega(v_{3,1})=\omega(v_{1,2})=8$ and $\omega(v_{4,2})=5$, as well as $\omega(u_{i})=\frac{4k^{2}+7k}{2}$ and $12\leq\omega(v_{i,j})\leq 4k+3$ for $ 3\leq j\leq k$ when $1\leq i\leq n$. Thus,  $\chi_{la}(F_{4,k})=4k-3$ for even $k$.

For $n\geq 6$, we begin by labeling the edges $v_{i,1}v_{i+1,1}$ for $1\leq i\leq n-1$ with $[1, n-1]$ by
\begin{equation*}
f(v_{i,1}v_{i+1,1})=\left\{\begin{array}{ll}
\frac{n}{2}-\frac{i-1}{2},  &\text{if $1\leq i\leq n-1$~and~$i$~odd}, \\[3pt]
n-\frac{i}{2}, &\text{if $1\leq i\leq n-1$~and~$i$~even}.
\end{array}
\right.
\end{equation*}
Next, label the edges $u_{i}v_{i,j}$ for $1\leq i\leq n$ and $1\leq j\leq k$ with $[n,nk+n-1]$ by
\begin{equation*}
f(u_{i}v_{i,j})=\left\{\begin{array}{ll}
2n-2, &\text{if }i=1~\text{and}~j=1,\\
n-3+i,  &\text{if }3\leq i\leq n,~i~\text{odd~and}~j=1,\\
n-1+i,  &\text{if }2\leq i\leq n,~i~\text{even~and}~j=1,\\
2n+1,   &\text{if }i=1~\text{and}~j=2,\\
3n+2-i,  &\text{if }3\leq i\leq n,~i~\text{odd~and}~j=2,\\
3n-i,    &\text{if }2\leq i\leq n,~i~\text{even~and}~j=2,\\
jn-2+2i, &\text{if $1\leq i\leq n,\,3\leq j\leq k$~and~$j$~odd},\\
jn+n+1-i, &\text{if $1\leq i\leq n,\,3\leq j\leq k$~and~$j$~even}.
\end{array}\right.
\end{equation*}
From the above, we can deduce that $\omega(u_{i})=\frac{k^{2}n+2kn-k}{2}$ for $1\leq i\leq n$, $\omega(v_{i,1})=\frac{5n-4}{2}$ for odd $i$ and $\omega(v_{i,1})=\frac{5n}{2}$ for even $i$ when $1\leq i\leq n-1$, $\omega(v_{n,1})=2n$, as well as $2n\leq \omega(v_{i,j})\leq nk+n-1$ for $1\leq i\leq n$ and $2\leq j\leq k$.
It is easy to see that this local antimagic labeling $f$ uses $nk-n+1$ colors, and $\chi_{la}(F_{n,k})\leq nk-n+1$ for $n$ even. So,  $\chi_{la}(F_{n,k})=nk-n+1$ for $k$ even in this case.

In conclusion, the local antimagic chromatic number of firecracker graph $F_{n,k}$ is $\chi_{la}(F_{n,k})=nk-n+1$ for $n,k\geq 2$. This completes the proof.
\end{proof}

Obviously, a firecracker graph is a tree, so its proper vertex chromatic number is 2. Given the relationship between the local antimagic total chromatic number and the proper vertex chromatic number, we now present the local antimagic total chromatic number of firecracker graphs in the following theorems.

\begin{theorem}\label{Theorem 2.3}
For firecracker graph $F_{n,k}$, the lower bound of the local antimagic total chromatic number satisfies the following inequality
\begin{equation*}
  \chi_{lat}(F_{n,k})\geq \chi(F_{n,k})=2.
\end{equation*}
\end{theorem}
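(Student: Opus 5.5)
The inequality has two parts, and I will treat them separately: first the general bound $\chi_{lat}(F_{n,k})\geq \chi(F_{n,k})$, then the value $\chi(F_{n,k})=2$.

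\emph{General bound.} This comes straight from the definitions. Let $g$ be any local antimagic total labeling of $F_{n,k}$, and color each vertex $v$ by $\omega_{t}(v)$. For adjacent $u,v$ we have $\omega_{t}(u)\neq\omega_{t}(v)$, so this coloring is proper. A proper coloring uses at least $\chi(F_{n,k})$ colors, so the number of distinct values of $\omega_{t}$ is at least $\chi(F_{n,k})$. Since this holds for every local antimagic total labeling, it holds for the minimum over all of them, which gives $\chi_{lat}(F_{n,k})\geq \chi(F_{n,k})$. I will also note that such a labeling exists: by Putri et al.~\cite{Putri} every tree other than $K_2$ admits a local antimagic total labeling, and an explicit one is easy to write down. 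So the minimum is taken over a nonempty set.

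\emph{The value $\chi(F_{n,k})=2$.} I need to show $F_{n,k}$ is a tree with at least one edge.
\begin{itemize}
\item \emph{Connected.} Each star centered at $u_i$ is connected, and consecutive stars are joined through the path $v_{1,1}v_{2,1}\cdots v_{n,1}$.
\item \emph{Edge count.} It has $n+nk$ vertices and $n+nk-1$ edges.
\item Hence it is a tree, and in particular bipartite.
\end{itemize}
An explicit proper 2-coloring is obtained by coloring each vertex by the parity of its distance from $u_1$.
\begin{itemize}
\item If $i$ is odd, $u_i$ gets color $0$ and every $v_{i,j}$ gets color $1$.
\item If $i$ is even, $u_i$ gets color $1$ and every $v_{i,j}$ gets color $0$.
\end{itemize}
This is proper: each edge $u_iv_{i,j}$ joins opposite colors, and each edge $v_{i,1}v_{i+1,1}$ joins consecutive indices of opposite parity. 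Because $F_{n,k}$ has at least one edge, at least two colors are needed. Therefore $\chi(F_{n,k})=2$.

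Combining the two parts gives $\chi_{lat}(F_{n,k})\geq \chi(F_{n,k})=2$. No step here is a real obstacle. The only point needing care is the nonemptiness of the set of local antimagic total labelings, and I handle that by citing \cite{Putri} or by exhibiting a labeling explicitly.
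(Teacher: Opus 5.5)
Your proof is correct and follows the paper's own argument. The paper notes that the coloring induced by any local antimagic total labeling is proper, so $\chi_{lat}(F_{n,k})\geq\chi(F_{n,k})$ by definition, and that $F_{n,k}$ is a tree, so $\chi(F_{n,k})=2$; you supply the details it leaves implicit (edge count, connectivity, an explicit 2-coloring), and your nonemptiness remark is harmless but not needed for the inequality.
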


If $k=1$, let us discuss the local antimagic total chromatic number of $F_{n,1}$. When $n=1$, $F_{1,1}$ is the complete graph $K_2$, with a clear local antimagic total chromatic number 2, achieved by the local antimagic total labeling $g(u_1)=2$, $g(u_2)=3$ and $g(u_{1}u_{2})=1$. For $n=2$, $F_{2,1}$ is the path $P_4$ with the local antimagic total chromatic number 3, as detailed in \cite[Theorem 3.4]{Lau-7}. We then examine $\chi_{lat}(F_{n,1})$ for $n\geq 3$.% for $e=u_{1}u_{2}\in F_{1,1}$
\begin{theorem}
For $n\geq 3$, the local antimagic total chromatic number of firecracker graphs $F_{n,1}$ is
\begin{equation*}
  \chi_{lat}(F_{n,1})=2.
\end{equation*}
\end{theorem}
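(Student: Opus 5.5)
The lower bound $\chi_{lat}(F_{n,1})\geq 2$ is Theorem \ref{Theorem 2.3}, so the whole work is to construct, for every $n\geq 3$, a local antimagic total labeling $g:V(F_{n,1})\cup E(F_{n,1})\rightarrow[1,4n-1]$ that induces exactly two colors. Here $F_{n,1}$ is the path $v_{1,1}v_{2,1}\cdots v_{n,1}$ with one pendant vertex $u_{i}$ attached to each $v_{i,1}$. It has $2n$ vertices and $2n-1$ edges, hence $4n-1$ labels.

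\textbf{Target coloring.} Since $F_{n,1}$ is a tree, a two-coloring must be its bipartition. Put $A=\{v_{i,1}: i \text{ odd}\}\cup\{u_{i}: i \text{ even}\}$ and let $B$ be the rest. I aim for $\omega_{t}(x)=\alpha$ for all $x\in A$ and $\omega_{t}(x)=\beta$ for all $x\in B$, with $\alpha\neq\beta$.

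\textbf{Reduction to path vertices.} A leaf has weight $g(u_{i})+g(u_{i}v_{i,1})$. The first step is therefore to pair off $2n$ of the labels into $n$ pairs with prescribed sums: $\beta$ when $i$ is odd and $\alpha$ when $i$ is even. The remaining $2n-1$ labels then go to the $n$ path vertices and the $n-1$ path edges.

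For a path vertex, $\omega_{t}(v_{i,1})=g(v_{i,1})+g(u_{i}v_{i,1})+\sum g(\text{path edges at } v_{i,1})$. The natural plan is to take the pendant edge labels $g(u_{i}v_{i,1})$ from a contiguous block arranged alternately increasing and decreasing along $i$. The complementary vertex labels $g(u_{i})$ then automatically make the leaf sums constant on each class. Concretely, I would try giving the pendant edges and pendant vertices the pairs $\{t,\,c-t\}$ from the middle and upper ranges of $[1,4n-1]$.

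The small labels $[1,2n-1]$ would go to the path vertices and path edges, assigned in a zigzag similar to the edge labels $f(v_{i,1}v_{i+1,1})$ used in the proof of $\chi_{la}(F_{n,k})$. The consecutive differences should telescope so that $\omega_{t}(v_{i,1})$ alternates between just two values. The two endpoints $v_{1,1},v_{n,1}$ have only one path edge, so their vertex labels must absorb that deficit; this is why I expect to split into the cases $n$ odd and $n$ even (and possibly treat $n=3,4$ by hand, as the paper does elsewhere).

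\textbf{Main obstacle and verification.} The hard part is fitting all these constraints simultaneously: the leaf sums and the path-vertex sums must match the same $\alpha,\beta$, and every label in $[1,4n-1]$ must be used exactly once. I would first solve small cases $n=3,4,5,6$ by hand to guess linear formulas in $i$ for each label type. A useful guide is the identity $\sum_{x}\omega_{t}(x)=\sum_{\text{vertices}} g+2\sum_{\text{edges}} g=\frac{(4n-1)4n}{2}+\sum_{\text{edges}}g$, which must equal $n(\alpha+\beta)$; this constrains which label block goes on the edges. Once the formulas are guessed, verification is routine: check the labels form a bijection onto $[1,4n-1]$, compute the weights of the four types of vertices (leaves and path vertices, in each class), and check $\alpha\neq\beta$.
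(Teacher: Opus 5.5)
\textbf{Gap: the labeling that proves the upper bound is never given.} Several parts of your proposal are correct: the lower bound, the observation that the two color classes must be the bipartition $A,B$, the reduction of leaf weights to pair sums, and the counting identity $n(\alpha+\beta)=\binom{4n}{2}+\sum_{e}g(e)$. But the proposal stops where the proof has to start. The content of the theorem is the upper bound $\chi_{lat}(F_{n,1})\le 2$. That requires, for every $n\ge 3$, an explicit bijection $g:V(F_{n,1})\cup E(F_{n,1})\to[1,4n-1]$ inducing only two weights, and you never specify one. ``Solve $n=3,4,5,6$ by hand, guess linear formulas in $i$, then verify'' is a search strategy, not a construction. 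The zigzag/telescoping description does not fix a single label. So nothing has been verified and the upper bound remains unproved. The paper's proof consists of exactly this missing part:
\begin{itemize}
\item hand labelings for $n=3$ (weights $12,16$) and $n=4$ (weights $18,20$);
\item closed formulas in $i$ for $g(v_{i,1}v_{i+1,1})$, $g(u_iv_{i,1})$, $g(v_{i,1})$ and $g(u_i)$, given separately for odd $n\ge 5$ and even $n\ge 6$, with special values at a few indices near the ends of the path;
\item a weight computation showing only the two values $5n-1$ and $5n$ occur.
\end{itemize}

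\textbf{Two further remarks.} First, your concrete suggestion to give $(g(u_i),g(u_iv_{i,1}))$ the pairs $\{t,c-t\}$ cannot use one constant $c$. Since $u_i\in B$ for odd $i$ and $u_i\in A$ for even $i$, a single $c$ forces $\alpha=\beta$, and the coloring is then not proper. You need $c=\beta$ for odd $i$ and $c=\alpha$ for even $i$.

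Second, your label split is different from the paper's, and it does work in small cases, so the route is viable. You put $[1,2n-1]$ on the path vertices and path edges and $[2n,4n-1]$ on the leaves and pendant edges. For $n=3$, take
\begin{itemize}
\item $g(v_{1,1}),g(v_{2,1}),g(v_{3,1})=1,4,2$;
\item $g(v_{1,1}v_{2,1})=3$ and $g(v_{2,1}v_{3,1})=5$;
\item $g(u_iv_{i,1})=11,6,8$ and $g(u_i)=7,9,10$ for $i=1,2,3$.
\end{itemize}
This gives weight $15$ on every vertex of $A$ and $18$ on every vertex of $B$. What you still owe is the general family of formulas, split by the parity of $n$ and with base cases. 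It must come with the bijectivity check and the weight computation, including the endpoint corrections at $v_{1,1}$ and $v_{n,1}$.
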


\begin{proof}
In accordance with Theorem \ref{Theorem 2.3}, the lower bound of $\chi_{lat}(F_{n,1})$ is established as $\chi_{lat}(F_{n,1})\geq \chi(F_{n,1})=2$. We now determine the upper bound of $\chi_{lat}(F_{n,1})$. Define a bijection  $g:\,V(F_{n,1})\cup E(F_{n,1}) \rightarrow[1, 4n-1]$, and consider the following two cases.

Case 1. $n$ is odd.

For $n=3$, the local antimagic total labeling is specified as follows:

$ g(u_{1}v_{1,1})=4, \hspace{0.8em}g(u_{2}v_{2,1})=6, \hspace{0.8em} g(u_{3}v_{3,1})=5, \hspace{0.8em} g(v_{1,1}v_{2,1})=1, \hspace{0.8em} g(v_{2,1}v_{3,1})=2,$

$g(u_{1})=8, \hspace{0.8em}g(u_{2})=10, \hspace{0.8em}g(u_{3})=7, \hspace{0.8em} g(v_{1,1})=11, \hspace{0.8em} g(v_{2,1})=3,\hspace{0.8em} g(v_{3,1})=9.$

It is straightforward to deduce that $\omega_{t}(u_1)=\omega_{t}(u_3)=\omega_{t}(v_{2,1})=12$ and $\omega_{t}(u_2)=\omega_{t}(v_{1,1})=\omega_{t}(v_{3,1})=16$. Thus $\chi_{lat}(F_{3,1})=2$.

For $n\geq 5$, label the edges and vertices of $F_{n,1}$ by
\begin{center}
$g(v_{i,1}v_{i+1,1})=\begin{cases}
\frac{n+i}{2}, & \text{if $1\leq i<n-1$ and odd $i$},\\[4pt]
\frac{i+1}{2}, & \text{if $1\leq i<n-1$ and even $i$},\\[4pt]
\frac{3n+1}{2}, &\text{if $i=n-1$}. \end{cases}$\vspace{0.4em}
$g(v_{i,1}v_{i+1,1})=\begin{cases}
n+1, & \text{if } i=1,\\[4pt]
\frac{3n+2-i}{2}, & \text{if $1< i<n$ and odd $i$},\\[4pt]
\frac{4n-i}{2}, & \text{if $1< i\leq n-5$ and even $i$},\\[4pt]
\frac{5n-2-i}{2}, & \text{if } i=n-3,n-1,\\[4pt]
n, &\text{if $i=n$}. \end{cases}$\vspace{0.4em}
 $g(v_{i,1})=\begin{cases}
\frac{7n-3}{2}, & \text{if } i=1,\\[4pt]
\frac{6n-1-i}{2}, & \text{if $1< i\leq n$ and odd $i$},\\[4pt]
\frac{5n-1-i}{2}, & \text{if $1< i\leq n-5$ and even $i$},\\[4pt]
\frac{3n+3}{2}, & \text{if } i=n-3,\\[4pt]
\frac{n-1}{2}, &\text{if $i=n-1$}. \end{cases}$\vspace{0.4em}
$g(u_{i})=\begin{cases}
n+1, & \text{if } i=1,\\[4pt]
\frac{7n-4+i}{2}, & \text{if $1< i<n$ and odd $i$},\\[4pt]
\frac{6n+i}{2}, & \text{if $1< i\leq n-5$ and even $i$},\\[4pt]
\frac{5n+1+i}{2}, & \text{if } i=n-3,n-1,\\[4pt]
4n-1, &\text{if $i=n$}. \end{cases}$
\end{center}

From the above labeling, it can be obtained that $\omega_{t}(v_{1,1})=\omega_{t}(v_{3,1})=\cdots=\omega_{t}(v_{n,1})=\omega_{t}(u_{2})=\omega_{t}(u_{4})=\cdots=\omega_{t}(u_{n-1})=5n$, and
$\omega_{t}(v_{2,1})=\omega_{t}(v_{4,1})=\cdots=\omega_{t}(v_{n-1,1})=\omega_{t}(u_{1})=\omega_{t}(u_{3})=\cdots=\omega_{t}(u_{n})=5n-1$.
There are only two different colors induced by the local antimagic total labeling of $F_{n,1}$, confirming that $\chi_{lat}(F_{n,1})=2$ for $n\geq 5$ and odd $n$.

Case 2. $n$ is even.

For $n=4$, there is a local antimagic total labeling of $F_{4,1}$ using 2 colors, that is,
\begin{equation*}
  \begin{array}{lllll}
  g(v_{1,1}v_{2,1})=1, &g(v_{2,1}v_{3,1})=3, &g(v_{3,1}v_{4,1})=2,& g(u_{1}v_{1,1})=4,  &g(u_{2}v_{2,1})=8, \\[3pt]
 g(u_{3}v_{3,1})=5, & g(u_{4}v_{4,1})=9,& g(v_{1,1})=15, & g(v_{2,1})=6,  & g(v_{3,1})=10,\\[3pt]
g(v_{4,1})=7,& g(u_{1})=14, &g(u_{2})=12, &g(u_{3})=13,&g(u_{4})=11.
\end{array}
\end{equation*}
Thus, $\omega_{t}(v_{1,1})=\omega_{t}(v_{3,1})=\omega_{t}(u_{2})=\omega_{t}(u_{4})=20$, and $\omega_{t}(v_{2,1})=\omega_{t}(v_{4,1})=\omega_{t}(u_{1})=\omega_{t}(u_{3})=18$. Consequently, $\chi_{lat}(F_{4,1})=2$.

For $n\geq 6$, the edges and vertices of $F_{n,1}$ are labeled similarly by

\begin{center}
$g(v_{i,1}v_{i+1,1})=\begin{cases}
\frac{i+1}{2}, & \text{if $1\leq i<n-1$ and odd $i$},\\[4pt]
 \frac{n+i}{2}, & \text{if $1\leq i<n-1$ and even $i$},\hspace{6em}\\[4pt]
\frac{3n}{2}, &\text{if $i=n-1$}. \end{cases}$\\[6pt]
$g(v_{i,1}v_{i+1,1})=\begin{cases}
n,    & \text{if } i=2,\\[8pt]
\frac{4n-1-i}{2}, & \text{if  $1\leq i<n-3$, odd $i$ and $i\neq n-5$},\\[4pt]
\frac{3n+2-i}{2}, & \text{if $2< i<n$ and even $i$},\\[4pt]
2n+2, & \text{if } i=n-5,\\[4pt]
2n, &\text{if $i=n-1$}. \end{cases}$\\[8pt]
$g(v_{i,1})=\begin{cases}
	3n-1, & \text{if } i=1,\\[4pt]
\frac{7n-4}{2}, & \text{if } i=2,\\[4pt]
	\frac{5n-1-i}{2}, & \text{if $1< i< n-1$ and odd $i$},\hspace{5em}\\[4pt]
	\frac{6n-2-i}{2}, & \text{if $2< i\leq n$ and even $i$},\\[4pt]
	\frac{3n+4}{2}, & \text{if } i=n-5,\\[4pt]
	\frac{n}{2}, &\text{if $i=n-1$}. \end{cases}$\\[8pt]
$g(u_{i})=\begin{cases}
		4n-1, & \text{if } i=2,\\[4pt]
		\frac{6n+1+i}{2}, & \text{if $1<i< n-1$, odd $i$ and $i\neq n-5$},\\[4pt]
		\frac{7n-4+i}{2}, & \text{if $2< i\leq n$ and even $i$},\\[4pt]
		3n-2, & \text{if } i=n-5,\\[4pt]
		3n, &\text{if $i=n-1$}. \end{cases}$
\end{center}
Consequently, $\omega_{t}(v_{1,1})=\omega_{t}(v_{3,1})=\cdots=\omega_{t}(v_{n-1,1})=\omega_{t}(u_{2})=\omega_{t}(u_{4})=\cdots=\omega_{t}(u_{n})=5n-1$, and
$\omega_{t}(v_{2,1})=\omega_{t}(v_{4,1})=\cdots=\omega_{t}(v_{n-1,1})=\omega_{t}(u_{1})=\omega_{t}(u_{3})=\cdots=\omega_{t}(u_{n})=5n$.  This is obviously a local antimagic total labeling that employs only two colors, and so $\chi_{lat}(F_{n,1})=2$ for $n\geq 6$ and even $n$.

To sum up, it follows that $\chi_{lat}(F_{n,1})=2$ for $n\geq 3$.\end{proof}

From the local antimagic total labeling of the firecracker graph $F_{n,1}$, we can know the local antimagic labeling of the join graph of $F_{n,1}$ and an empty graph $K_1$, denoted as $F_{n,1}\vee K_{1}$. Just need to assign the labels of the vertices from the local antimagic total labeling to the edges joining $K_1$ in the join graph $F_{n,1}\vee K_{1}$. Specifically, let $g(v_{i,1})=f(av_{i,1})$ and $g(u_{i})=f(au_{i})$, where $a\in V(K_{1})$ in  $F_{n,1}\vee K_{1}$, and $g$ is a local antimagic total labeling of $F_{n,1}$. Moreover, $f$, derived from $g$, becomes a local antimagic labeling of $F_{n,1}\vee K_{1}$. Thus the local antimagic chromatic number of graph $F_{n,1}\vee K_{1}$ can be determined.

\begin{corollary}
  For $n\geq 3$, $\chi_{la}(F_{n,1}\vee K_{1})=3$.
\end{corollary}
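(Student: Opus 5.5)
The plan splits into a lower bound and an upper bound.

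\textbf{Lower bound.} Write $a$ for the vertex of $K_{1}$. It is adjacent to every vertex of $F_{n,1}$, so any proper coloring of $F_{n,1}\vee K_{1}$ gives $a$ a color used nowhere else. Since $F_{n,1}$ has at least one edge, its two endpoints need two further distinct colors. Hence $\chi_{la}(F_{n,1}\vee K_{1})\geq \chi(F_{n,1}\vee K_{1})=\chi(F_{n,1})+1=3$.

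\textbf{Upper bound: the labeling.} I will use the transfer described just before the statement. Take the local antimagic total labelings $g$ of $F_{n,1}$ with two colors built in the preceding theorem. Note that $F_{n,1}$ has $2n$ vertices and $2n-1$ edges, so $g$ is a bijection onto $[1,4n-1]$. The join $F_{n,1}\vee K_{1}$ has exactly $(2n-1)+2n=4n-1$ edges. Define $f$ by:
\begin{itemize}
\item $f(e)=g(e)$ for each edge $e$ of $F_{n,1}$;
\item $f(ax)=g(x)$ for each vertex $x\in\{u_{i},v_{i,1}\}$.
\end{itemize}
Then $f$ is a bijection from $E(F_{n,1}\vee K_{1})$ onto $[1,4n-1]$.

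\textbf{Upper bound: the induced colors.} For every $x\in V(F_{n,1})$ we have $\omega_{f}(x)=g(x)+\sum_{e\in E_{F_{n,1}}(x)}g(e)=\omega_{t}(x)$. So the vertices of $F_{n,1}$ receive exactly the two colors of the preceding theorem, namely $5n$ and $5n-1$, and adjacent vertices of $F_{n,1}$ still differ. The apex receives
\[
\omega_{f}(a)=\sum_{x\in V(F_{n,1})}g(x)=\frac{(4n-1)4n}{2}-\sum_{e\in E(F_{n,1})}g(e)\geq 8n^{2}-2n-\bigl((4n-1)+\cdots+(2n+1)\bigr).
\]

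\textbf{Main obstacle.} The only thing left to check is that $\omega_{f}(a)\notin\{5n-1,5n\}$, and I will do this by a crude bound. The apex sum is a sum of $2n$ distinct positive labels, so it is at least $1+2+\cdots+2n=n(2n+1)$. For $n\geq 3$ this gives $n(2n+1)\geq 7n>5n$. Hence $a$ gets a third color, distinct from all its neighbors, and $f$ is a local antimagic labeling using exactly $3$ colors. This yields $\chi_{la}(F_{n,1}\vee K_{1})\leq 3$, which combined with the lower bound gives $\chi_{la}(F_{n,1}\vee K_{1})=3$.
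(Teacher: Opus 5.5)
Your proof is correct and follows the paper's own argument: the chromatic lower bound $\chi(F_{n,1}\vee K_{1})=3$, then moving the vertex labels of the two-colour total labeling $g$ onto the apex edges. You also add a check the paper leaves implicit, namely that the apex colour $\sum_{x}g(x)\geq n(2n+1)$ is new.

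One slip: $5n-1,5n$ are the colours of the case $n\geq 5$ only. For $n=3$ the paper's labeling gives $12,16$, and for $n=4$ it gives $18,20$. So at $n=3$ the comparison with $5n=15$ is not literally enough. Your bound still separates the apex there, since $21>16$ (and $36>20$ at $n=4$).

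Alternatively, with two colours on the connected bipartite graph $F_{n,1}$, each colour class contains a leaf $u_{i}$. Hence both colours are at most $(4n-1)+(4n-2)=8n-3<n(2n+1)$ for $n\geq 4$. This makes the check independent of the specific formulas.
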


For $k\geq 2$, $F_{1,k}$ is a star, and thus its local antimagic total chromatic number is 2. $F_{2,2}$ is the path $P_{6}$, and the local antimagic total chromatic number of $F_{2,2}$ can be obtained from \cite[Theorem 3.4]{Lau-7}. Next, we will give the local antimagic total chromatic number of $F_{2,k}$ for $k\geq 3$.

\begin{theorem}
  For firecracker graph $F_{2,k}$, $k\geq 3$, the local antimagic total chromatic number of $F_{2,k}$ is bounded by
\begin{equation*}
2\leq \chi_{lat}(F_{2,k})\leq 3.
\end{equation*}
\end{theorem}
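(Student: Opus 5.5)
The lower bound $\chi_{lat}(F_{2,k})\geq 2$ is immediate from Theorem \ref{Theorem 2.3}, since $F_{2,k}$ is a tree. So the work is the upper bound: I would construct a local antimagic total labeling $g: V(F_{2,k})\cup E(F_{2,k})\rightarrow[1,4k+3]$ that induces exactly three colors. The graph has $2k+2$ vertices and $2k+1$ edges, so there are $4k+3$ labels.

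\textbf{Color pattern.} The $2k-2$ pendant vertices $v_{i,j}$ ($i=1,2$, $2\leq j\leq k$) all receive a common color $c$. The two centers $u_{1},u_{2}$, which are nonadjacent, receive a common color $d\neq c$. The spine vertex $v_{1,1}$ also receives $c$; this is allowed because its neighbours are $u_{1}$ (color $d$) and $v_{2,1}$. The remaining vertex $v_{2,1}$ is adjacent to $u_{2}$ (color $d$) and to $v_{1,1}$ (color $c$), so it needs a third color $e$. This pattern is proper, so any labeling realising it gives $\chi_{lat}(F_{2,k})\leq 3$.

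\textbf{Leaves.} For a leaf, $\omega_t(v_{i,j})=g(v_{i,j})+g(u_iv_{i,j})$. I would therefore use complementary pairs $\{x,\,c-x\}$, taking $c=4k+4$ so that $x$ and $c-x$ lie in $[1,4k+3]$. Each pendant edge gets the small member $x$ and its leaf gets the large member $4k+4-x$. Pairs $\{x,4k+4-x\}$ with $x\in[1,2k+1]$ are available, and the unpaired label $2k+2$ can be kept for one of the special elements. This consumes $2k-2$ complementary pairs. That leaves, up to adjusting, the pair set that will be used for the remaining elements: $u_{1}$, $u_{2}$, $v_{1,1}$, $v_{2,1}$, and the edges $u_{1}v_{1,1}$, $u_{2}v_{2,1}$, $v_{1,1}v_{2,1}$.

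\textbf{Balancing the centers.} We need
\[
\omega_t(u_1)=g(u_1)+g(u_1v_{1,1})+\sum_{j\geq 2} g(u_1v_{1,j})
\]
to equal the analogous expression for $u_2$. I would distribute the pendant-edge labels between the two stars alternately, in the way the paper uses $jn-1+i$ and $jn+n-i$ for even and odd $j$. That makes the two pendant sums differ by at most a small constant depending on the parity of $k$, and the difference can then be absorbed by the choice of $g(u_1),g(u_2)$ and $g(u_iv_{i,1})$.

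\textbf{Spine and main obstacle.} We need $\omega_t(v_{1,1})=g(v_{1,1})+g(u_1v_{1,1})+g(v_{1,1}v_{2,1})=4k+4$. We also need $\omega_t(v_{2,1})=e\notin\{c,d\}$, which is automatic once $e\neq 4k+4$, since $d$ is very large. I expect this step to be the main obstacle. Seven labels must satisfy three linear conditions simultaneously, and the leftover labels have to be exactly a set not used by the leaf pairs. I would first settle the small cases $k=3,4$ by hand search. I would then make the choice uniform by a parity split on $k$, reserving for instance the labels $1$, $2$, $2k+2$ and one large pair for the spine, and checking that the center sums still match.
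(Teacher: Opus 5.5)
Your color pattern (all leaves and $v_{1,1}$ in one class, $u_1,u_2$ in a second, $v_{2,1}$ alone in a third) is exactly the one the paper uses. So are your tools: complementary pairs on each pendant edge and its leaf, an alternating split of the pendant-edge labels between $u_1$ and $u_2$, and a parity split on $k$. The gap is that the entire content of the upper bound is a labeling valid for every $k\geq 3$, and you stop precisely there. The seven remaining labels are never placed, and the claim that the imbalance between the centers ``can then be absorbed'' is asserted rather than checked.

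It is not automatic. Write $P_i$ for the sum of the pendant-edge labels at $u_i$, and take the reservation you hint at: $\{1,4k+3\}$, $\{2,4k+2\}$, $\{2k+1,2k+3\}$ and $2k+2$. The pendant edges then carry $x\in[3,2k]$, or $4k+4-x\equiv x \pmod{2}$ if you swap within a pair, so $P_1+P_2\equiv 2k^2+k-3\equiv k+1 \pmod{2}$. On the other hand, $\omega_t(u_1)=\omega_t(u_2)$ demands $P_1-P_2=g(u_2)+g(u_2v_{2,1})-g(u_1)-g(u_1v_{1,1})$. For a fixed placement of the reserved labels, the parity of this quantity does not depend on $k$. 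So no single placement works for both parities of $k$, and the case split has to be carried out, not just announced.

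Your plan does close, and quickly. Put $x\in[3,2k]$ on the pendant edges and $4k+4-x$ on the corresponding leaves. Set $g(v_{1,1}v_{2,1})=1$, $g(v_{2,1})=2$, $g(u_2v_{2,1})=2k+3$, $g(u_2)=4k+2$ and $g(u_1)=4k+3$. Take $(g(u_1v_{1,1}),g(v_{1,1}))=(2k+2,2k+1)$ for odd $k$ and $(2k+1,2k+2)$ for even $k$. Then $\omega_t(v_{1,1})=4k+4$ and $\omega_t(v_{2,1})=2k+6\neq 4k+4$. The condition $\omega_t(u_1)=\omega_t(u_2)$ becomes $P_1-P_2=0$ for odd $k$ and $P_1-P_2=1$ for even $k$.

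To meet it, split $[3,2k]$ into the $k-1$ pairs $\{2t+1,2t+2\}$ and send one element of each pair to each center, with the larger element going to $u_1$ in $m$ of the pairs. This gives $P_1-P_2=2m-k+1$, so $m=(k-1)/2$ for odd $k$ and $m=k/2$ for even $k$ works. The common center color is at least $6k+5$, so the three colors $4k+4$, $2k+6$ and $\omega_t(u_1)$ are distinct and the coloring is proper.

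The paper's construction is the same idea with pair sum $4k+5$. It uses spine edge $1$, $g(u_iv_{i,1})=2k-1+i$, pendant edges $[2,2k-1]$ via $2j-3+i$ and $2j-i$, and leaves $[2k+6,4k+3]$. The labels $2k+2,\dots,2k+5$ are then distributed over $u_1,u_2,v_{1,1},v_{2,1}$ according to the parity of $k$.
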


\begin{proof}
From Theorem \ref{Theorem 2.3}, the lower bound of $\chi_{lat}(F_{2,k})$ for $k\geq 3$ can be obtained,
that is, $\chi_{lat}(F_{2,k})\geq\chi(F_{2,k})=2$.

We now give part of the local antimagic total labeling for $F_{2,k}$ by

$g(v_{1,1}v_{2,1})=1$,  \hspace{1em}$ g(u_{i}v_{i,1})=2k-1+i$ for $i=1,2$,\vspace{0.4em}

$ g(u_{i}v_{i,j})=\begin{cases}
     2j-3+i, & \text{for $2\leq j\leq k$, even $j$ and $i=1,2$}, \\
     2j-i, & \text{for $2\leq j\leq k$, odd $j$ and $i=1,2$}. \end{cases}$\vspace{0.4em}

$g(v_{i,j})=\begin{cases}
     4k+8-2j-i, & \text{for $2\leq j\leq k$, even $j$ and $i=1,2$}, \\
     4k+5-2j+i, & \text{for $2\leq j\leq k$, odd $j$ and $i=1,2$}. \end{cases}$\vspace{0.6em}

\noindent Next, the labels about the remaining vertices $u_{i}$, $v_{i,1}$ in graph $F_{2,k}$ are determined based on the parity of $k$. For odd $k$, $g(u_{i})=2k+4-i$ for $i=1,2$, $g(v_{i,1})=2k+3+i$ for $i=1,2$. For even $k$, $g(u_{i})=2k+7-2i$ for $i=1,2$, $g(v_{i,1})=2k+6-2i$ for $i=1,2$. It is not difficult to check that, when $k$ is odd, $\omega_{t}(v_{1,1})=\omega_{t}(v_{i,j})=4k+5$ for $2\leq j\leq k$ and $i=1,2$, $\omega_{t}(v_{2,1})=4k+7$, $\omega_{t}(u_{1})=\omega_{t}(u_{2})=\frac{2k^{2}+7k+5}{2}$. When $k$ is even,  $\omega_{t}(v_{1,1})=\omega_{t}(v_{i,j})=4k+5$ for $2\leq j\leq k$ and $i=1,2$, $\omega_{t}(v_{2,1})=4k+4$, $\omega_{t}(u_{1})=\omega_{t}(u_{2})=\frac{2k^{2}+7k+8}{2}$. This local antimagic total labeling of $F_{2,k}$ induces three distinct colors, and so $\chi_{lat}(F_{2,k})\leq 3$ for $k\geq 3$.\end{proof}
%There are 3 different colors induced by this local antimagic total labeling of graph $F_{2,k}$,  so

\begin{theorem}
  For $n\geq 3$ and $k\geq 2$, the local antimagic total chromatic number of the firecracker graph $F_{n,k}$ is bounded as follows:
\begin{equation*}
  2\leq \chi_{lat}(F_{n,k})\leq 3.
\end{equation*}
\end{theorem}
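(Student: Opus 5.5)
The lower bound is Theorem \ref{Theorem 2.3}, since $F_{n,k}$ is a tree and hence $\chi(F_{n,k})=2$. All the work is in the upper bound. I would build an explicit bijection $g:V(F_{n,k})\cup E(F_{n,k})\rightarrow[1,2nk+2n-1]$ that induces at most three colors, imitating the $F_{2,k}$ construction above.

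\emph{Target color pattern.}
\begin{itemize}
\item All pendant vertices $v_{i,j}$ ($2\leq j\leq k$) get one common value $c$, say $c=2nk+2n+1$ or a nearby constant.
\item All centers $u_i$ get one common large value $C$.
\item The spine vertices $v_{i,1}$ alternate between $c$ and a third value $c'$.
\end{itemize}
Since consecutive spine vertices are adjacent, they must differ, so alternation is forced; and $v_{i,1}$ is adjacent to $u_i$, so we need $c,c'\neq C$. Three colors suffice because $v_{i,1}$ is never adjacent to any $v_{i',j}$ with $j\geq 2$.

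\emph{Pendant part.} The pendant condition is $g(u_iv_{i,j})+g(v_{i,j})=c$ for all $j\geq 2$. This pairs the small labels used on pendant edges with large labels on pendant vertices, as in the $F_{2,k}$ proof. I would use the smallest $n(k-1)$ labels (after reserving $[1,n-1]$ for the spine edges) on the pendant edges and the largest on the pendant vertices, with complementary pairs summing to $c$. Within each star, the pendant-edge labels are chosen by a boustrophedon rule, $jn+\text{const}+i$ for $j$ even and $jn+\text{const}-i$ for $j$ odd, exactly like the earlier $F_{n,k}$ labelings. Then each $u_i$ gets a nearly equal sum over $j\geq 2$; when $k-1$ is odd there is a residual linear term in $i$.

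\emph{Center part.} Next I would fix $g(u_i)$ and $g(u_iv_{i,1})$ from the middle range of unused labels so that every $\omega_t(u_i)$ equals $C$, absorbing the residual linear term in $i$. Because $C$ is of order $k^2 n$, it automatically differs from $c$ and $c'$ (for $k\geq 2$, $n\geq 3$).

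\emph{Spine part.} Finally, the spine vertices need $\omega_t(v_{i,1})=g(v_{i,1})+g(u_iv_{i,1})+g(v_{i-1,1}v_{i,1})+g(v_{i,1}v_{i+1,1})$ to alternate between two values, with the end vertices $v_{1,1},v_{n,1}$ having only one spine edge. The spine edges would get the alternating $\frac{n\pm i}{2}$-type labels used in the $F_{n,1}$ total labeling; that labeling already makes the spine sums take two values. I would then choose $g(v_{i,1})$ from the leftover labels to correct each sum. This would be split by the parity of $n$ (and possibly $k$), with small cases $n=3,4$ given explicitly as in the preceding proofs.

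\emph{Main obstacle.} The hardest step is the bookkeeping that makes $g$ a genuine bijection: the labels chosen for $g(u_i)$, $g(u_iv_{i,1})$ and $g(v_{i,1})$ must exactly fill the gaps left by the pendant pairing and the spine edges, while still meeting the $u_i$-equalization and the spine alternation. I would first write a computer-checked pattern for small $n$ and $k$ and read off linear formulas in $i$, treating the two ends of the spine as exceptional indices, as the paper does with $i=n-3$ and $i=n-5$. If a uniform pendant value $c$ cannot be reconciled with the spine, the fallback is to let the pendant vertices share the color $c$ with one spine class, which keeps the total at three colors.
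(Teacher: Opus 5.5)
\textbf{Your overall strategy is the paper's, but the proposal leaves out the construction, which is the whole content of the upper bound.}

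The architecture matches:
\begin{itemize}
\item the lower bound comes from $\chi(F_{n,k})=2$;
\item all pendant vertices and one parity class of spine vertices receive a common value $c$, the other spine class receives $c'$, and every center receives $C$;
\item your label ranges are the paper's, for both parities of $k$: spine edges get $[1,n-1]$, pendant edges $[n,nk-1]$, pendant vertices $[nk+3n,2nk+2n-1]$;
\item complementary pairing then forces $c=2nk+3n-1$ (not $2nk+2n+1$), and exactly the $3n$ labels $[nk,nk+3n-1]$ remain for $g(u_i)$, $g(u_iv_{i,1})$, $g(v_{i,1})$.
\end{itemize}

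The gap is that the upper bound is an existence statement whose proof \emph{is} the labeling, and your proposal stops where that proof begins. Nothing shows that this middle block can be distributed so that all $\omega_t(u_i)$ coincide while the spine sums alternate, with one value equal to $c$, including at the end vertices $v_{1,1}$ and $v_{n,1}$. A computer search followed by reading off formulas is a plan, not an argument. Your fallback adds no room either, since your primary pattern already has the pendants sharing $c$ with one spine class.

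\textbf{How the paper closes the gap.} It labels the spine so that the spine contribution at an interior $v_{i,1}$ depends only on the parity of $i$. For example, $g(v_{i,1}v_{i+1,1})=\frac{2n-1-i}{2}$ for odd $i$ and $\frac{i}{2}$ for even $i$ gives $n-1$ and $n$. For $k$ odd it then suffices to split $[nk,nk+3n-1]$ into triples $(g(u_iv_{i,1}),g(u_i),g(v_{i,1}))$ such that:
\begin{itemize}
\item $g(u_i)+g(u_iv_{i,1})$ is constant;
\item $g(v_{i,1})+g(u_iv_{i,1})$ is constant on each parity class;
\item the end vertices are adjusted separately.
\end{itemize}
The paper writes such triples explicitly for each parity of $n$, and separately for $n=3$.

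\textbf{The $F_{n,1}$ labeling cannot be transplanted.} There the $u_i$ are leaves, so $g(u_i)+g(u_iv_{i,1})$ alternates between two values. In your scheme with $k$ odd these sums must be constant, because every center receives $C$.

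\textbf{Your $k$ even variant is unverified.} Absorbing the residual $\pm i$ from the odd number $k-1$ of boustrophedon columns into $g(u_i)+g(u_iv_{i,1})$ makes the center condition an arithmetic progression in $i$. That is a different design problem, and you have not shown it is solvable. The paper sidesteps it:
\begin{itemize}
\item it uses the boustrophedon only on columns $j\ge 3$, an even number of them, so their contribution to $\omega_t(u_i)$ is constant;
\item it treats column $j=2$ together with the spine as an $F_{n,2}$ block labeled from $[1,2n-1]\cup[nk,nk+3n-1]\cup[2nk+n,2nk+2n-1]$;
\item in that block the labels $g(u_iv_{i,2})$ form a suitably chosen, generally non-linear, permutation of $[n,2n-1]$, which supplies the extra freedom.
\end{itemize}

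\textbf{A smaller point.} For $k=2$ the values $C$, $c$, $c'$ are all of order $n$. So $C\notin\{c,c'\}$ does not follow from orders of magnitude and must be checked on the actual labeling.
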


\begin{proof}
From Theorem \ref{Theorem 2.3}, the lower bound of the local antimagic total chromatic number of $F_{n,k}$ for $n\geq 3$ and $k\geq 2$ is given by $\chi_{lat}(F_{n,k})\geq\chi(F_{n,k})=2$.

The upper bound of the local antimagic total chromatic number of $F_{n,k}$ is determined by the colors induced by the local antimagic total labeling. Define a bijection $g:V(F_{n,k})\cup E(F_{n,k})\rightarrow [1,2nk+2n-1]$, and let $U=[1,2nk+2n-1]$.
Discuss the local antimagic total labeling of graph $F_{n,k}$ through the following cases.

Case 1. $k$ is even.

Initially, we utilize the set $A=[2n,nk-1]\cup[nk+3n,2nk+n-1]$ to label the edges and vertices of $F_{n,k}$ excluding its subgraph $F_{n,2}$.

$g(u_{i}v_{i,j})=\begin{cases}
nj-n-1+i, & \text{if $1\leq i\leq n$, $3\leq j\leq k$ and odd $j$},\\[4pt]
nj-i,        & \text{if $1\leq i\leq n$, $3\leq j\leq k$ and even $j$}.\end{cases}$\vspace{0.4em}

$g(v_{i,j})=\begin{cases}
2nk+4n-nj-i, & \text{if $1\leq i\leq n$, $3\leq j\leq k$ and odd $j$},\\[4pt]
2nk+3n-1-nj+i,        & \text{if $1\leq i\leq n$, $3\leq j\leq k$ and even $j$}.\end{cases}$\vspace{0.6em}

Subsequently, employ the set $B=U\setminus A$ to label the subgraph $F_{n,2}$ of $F_{n,k}$, where $B=[1,2n-1]\cup[nk,nk+3n-1]\cup[2nk+n,2nk+2n-1]$.
It is further divided into the following two subcases according to the parity of $n$.

Subcase 1.1.  $n$ is odd.

When $n=3$, the local antimagic total labeling of $F_{3,k}$ for even $k$ is specified as follows:

$g(v_{1,1}v_{2,1})=1,\hspace{0.5em} g(v_{2,1}v_{3,1})=2, \hspace{0.5em}g(u_{1}v_{1,2})=3, \hspace{0.5em}g(u_{2}v_{2,2})=4, \hspace{0.5em}g(u_{3}v_{3,2})=5,$

$g(u_{1}v_{1,1})=3k+5,\hspace{0.5em}g(u_{2}v_{2,1})=3k+3,\hspace{0.5em}g(u_{3}v_{3,1})=3k+6,$\vspace{0.5em}

$\hspace{-0.5em}\begin{array}{lll}
g(u_{1})=3k+7,&g(u_{2})=3k+8,&g(u_{3})=3k+4,\\[4pt]
g(v_{1,1})=3k+2,&g(v_{2,1})=3k+1,&g(u_{3}v_{3,1})=3k,\\[4pt]
g(v_{1,2})=6k+5,&g(v_{2,2})=6k+4,&g(u_{3}v_{3,2})=6k+3,
\end{array}$\vspace{0.6em}

For $n\geq 5$, the corresponding local antimagic total labeling of $F_{n,k}$ for odd $n$ and even $k$ is obtained by\\

$g(v_{i,1}v_{i+1,1})=\begin{cases}
i, & \text{if $1\leq i<n$ and odd $i$},\\
n+1-i, &\text{if $1\leq i<n$ and even $i$}.\end{cases}$\vspace{0.5em}

$g(u_{i}v_{i,1})=\begin{cases}
nk+2n-1, & \text{if $ i=1$},\\
\frac{2nk+4n-3-i}{2},& \text{if $2\leq i< n$ and odd $i$},\\[4pt]
\frac{2nk+3n-1-i}{2}, &\text{if $2\leq i\leq n$ and even $i$},\\[4pt]
\frac{2nk+5n-3}{2}, & \text{if $i=n$}.\end{cases}$\vspace{0.5em}

$g(u_{i}v_{i,2})=\begin{cases}
n, & \text{if $ i=1$},\\
n+1+i, & \text{if $2\leq i< n$ and odd $i$},\\
n-1+i,  &\text{if $2\leq i\leq n$ and even $i$},\\
n+2, & \text{if $ i=n$}.\end{cases}$\vspace{0.5em}

$g(u_{i})=\begin{cases}
\frac{2nk+5n-1}{2}, & \text{if $ i=1$},\\[4pt]
\frac{2nk+5n-2-i}{2},& \text{if $2\leq i< n$ and odd $i$},\\[4pt]
\frac{2nk+6n-i}{2}, &\text{if $2\leq i\leq n$ and even $i$},\\[4pt]
nk+2n-2, & \text{if $ i=n$}.\end{cases}$\vspace{0.5em}

$g(v_{i,1})=\begin{cases}
nk+n-1, & \text{if $ i=1$},\\
\frac{2nk-3+i}{2}, & \text{if $2\leq i\leq n$ and odd $i$},\\[4pt]
\frac{2nk+n-3+i}{2}, &\text{if $2\leq i\leq n$ and even $i$}.\end{cases}$\vspace{0.5em}

$g(v_{i,2})=\begin{cases}
2nk+2n-1, & \text{if $ i=1$},\\
2nk+2n-2-i, & \text{if $2\leq i< n$ and odd $i$},\\[4pt]
2nk+2n-i,  &\text{if $2\leq i\leq n$ and even $i$},\\[4pt]
2nk+2n-3, & \text{if $ i=n$}.\end{cases}$\vspace{0.6em}

In this subcase, it can be calculated from the above labels that

$\omega_{t}(v_{s,1})=\omega_{t}(v_{i,j})=2nk+3n-1$ for $1\leq s,i\leq n$, $2\leq j\leq k$ and odd $s$,

$\omega(v_{s,1})=2nk+3n-2$ for $1\leq s\leq n$ and even $s$,

$\omega_{t}(u_{i})=\frac{nk^{2}+4nk+7n-k-1}{2}$ for $1\leq i\leq n$.

Clearly, this local antimagic total labeling of $F_{n,k}$ induces three different colors, and so $\chi_{lat}(F_{n,k})\leq 3$ for odd $n$ and even $k$.

Subcase 1.2.  $n$ is even.

The local antimagic total labeling for even $n$ is defined as follows:\\

\vspace{-0.8em}
$g(v_{i,1}v_{i+1,1})=\begin{cases}
                       \frac{i+1}{2}, & \text{if $1\leq i<n$ and odd $i$},\\[4pt]
                       \frac{n+i}{2}, & \text{if $1\leq i<n$ and even $i$}.\end{cases}$\vspace{0.5em}

$g(u_{i}v_{i,1})=\begin{cases}
nk+n-\frac{i+1}{2}, & \text{if $1\leq i\leq n-2$ and odd $i$},\\[4pt]
nk-1+\frac{n-i}{2}, & \text{if $1\leq i\leq n-2$ and even $i$},\\[4pt]
nk+n,     &\text{if }i=n-1,\\[4pt]
nk-1+\frac{n}{2}, &\text{if }i=n.\end{cases}$\vspace{0.4em}

$g(u_{i}v_{i,2})=\begin{cases}
\frac{3n-3-i}{2}, & \text{if $1\leq i\leq n-2$ and odd $i$},\\[4pt]
\frac{4n-2-i}{2}, & \text{if $1\leq i\leq n-2$ and even $i$},\\[4pt]
\frac{3n-2}{2},     &\text{if } i=n-1,\\[4pt]
2n-1, &\text{if }i=n.\end{cases}$\vspace{0.4em}

$g(u_{i})=\begin{cases}
nk+2n+1+i, & \text{if $1\leq i\leq n-2$},\\[4pt]
nk+n+1+i,        & \text{if $i=n-1,n$}.\end{cases}$\vspace{0.4em}

$g(v_{i,1})=\begin{cases}
nk+2n-1, & i=1,\\[4pt]
nk-1+\frac{3n-i}{2}, & \text{if $1<i\leq n-2$ and odd $i$},\\[4pt]
nk+2n-2-\frac{i}{2},        & \text{if $1< i\leq n-2$ and even $i$},\\[4pt]
nk+\frac{n}{2},  &\text{if } i=n-1,\\[4pt]
nk+2n-2, &\text{if }i=n.\end{cases}$\vspace{0.4em}

$g(v_{i,2})=\begin{cases}
2nk+\frac{3n+1+i}{2}, & \text{if $1\leq i\leq n-2$ and odd $i$},\\[4pt]
2nk+n+\frac{i}{2},        & \text{if $1\leq i\leq n-2$ and even $i$},\\[4pt]
2nk+\frac{3n}{2},         & \text{if } i=n-1,\\[4pt]
2nk+n, &\text{if }i=n.\end{cases}$\vspace{0.4em}

\noindent According to the above labels, we have

$\omega_{t}(v_{s,1})=\omega_{t}(v_{i,j})=2nk+3n-1$ for $1\leq s,i\leq n$, $2\leq j\leq k$ and odd $s$,

$\omega_{t}(v_{s,1})=2nk+3n-3$ for $1\leq s\leq n$ and even $s$,

$\omega_{t}(u_{i})=\frac{nk^{2}+4nk+5n-k}{2}+2nk+2n$ for $1\leq i\leq n$.

\noindent In this subcase, the local antimagic total labeling of $F_{n,k}$ uses three distinct colors, and thus $\chi_{lat}(F_{n,k})\leq 3$ for even $n$ and $k$.

Therefore, it is verified that $\chi_{lat}(F_{n,k})\leq 3$ for even $k$.

Case 2. $k$ is odd.

Similarly, use the set $C=[n,nk-1]\cup[nk+3n,2nk+2n-1]$ to label the edges and vertices of the firecracker graph $F_{n,k}$ excluding its subgraph $F_{n,1}$, in the following way:\\
\vspace{0.5em}

$g(u_{i}v_{i,j})=\begin{cases}
     nj-n-1+i, & \text{for $1\leq i\leq n$, $2\leq j\leq k$, and even $j$}, \\
     nj-i, & \text{for $1\leq i\leq n$, $2\leq j\leq k$, and odd $j$}. \end{cases}$\vspace{0.5em}

$ g(v_{i,j})=\begin{cases}
     2nk+4n-nj-i, & \text{for $1\leq i\leq n$, $2\leq j\leq k$, and even $j$}, \\
     2nk+3n-1-nj+i, & \text{for $1\leq i\leq n$, $2\leq j\leq k$, and odd $j$}. \end{cases}$\vspace{0.5em}

Then, use the set $D=U\setminus C=[nk,nk+3n-1]$ to label the subgraph $F_{n,1}$ of $F_{n,k}$. We discuss the parity of $n$ in a manner consistent with previous discussion.

Subcase 2.1. $n$ is odd.

For $n=3$,  present the local antimagic toal labeling of $F_{3,k}$ by the following formulas:

$g(v_{1,1}v_{2,1})=1,\hspace{1em}g(v_{2,1}v_{3,1})=2,\hspace{1em}g(u_{1}v_{1,1})=3k+7,\hspace{1em}g(u_{2}v_{2,1})=3k+8$

$g(u_{3}v_{3,1})=3k+5,\hspace{1em}g(v_{1,1})=3k,\hspace{1em}g(v_{2,1})=3k+2,\hspace{1em}g(v_{3,1})=3k+1$,

$g(u_{1})=3k+4,\hspace{1em}g(u_{2})=3k+3,\hspace{1em}g(u_{3})=3k+6$.

When $n\geq 5$, give the local antimagic toal labeling of $F_{n,k}$ by\vspace{0.6em}

$g(v_{i,1}v_{i+1,1})=\begin{cases}
\frac{2n-1-i}{2}, & \text{if $1\leq i\leq n$ and odd $i$},\\[4pt]
\frac{i}{2}, & \text{if $1\leq i\leq n$ and even $i$}. \end{cases}$\vspace{0.5em}

$g(u_{i}v_{i,1})=\begin{cases}
\frac{2nk+4n-1-i}{2}, & \text{if $1\leq i\leq n-1$ and odd $i$},\\[4pt]
\frac{2nk+6n-i}{2}, & \text{if $1\leq i\leq n-1$ and even $i$},\\[4pt]
\frac{2nk+5n-1}{2}, & \text{if $ i= n$}.\end{cases}$\vspace{0.5em}

$g(v_{i,1})=\begin{cases}
\frac{2nk+1+i}{2}, & \text{if $1\leq i\leq n-1$ and odd $i$},\\[4pt]
\frac{2nk+n-1+i}{2}, & \text{if $1\leq i\leq n-1$ and even $i$},\\[4pt]
nk, & \text{if $ i= n$}.\end{cases}$\vspace{0.5em}

$g(u_{i})=\begin{cases}
\frac{2nk+4n-1+i}{2}, & \text{if $1\leq i\leq n-1$ and odd $i$},\\[4pt]
\frac{2nk+2n-2+i}{2}, & \text{if $1\leq i\leq n-1$ and even $i$},\\[4pt]
\frac{2nk+3n-1}{2}, & \text{if $ i= n$}.\end{cases}$\vspace{0.6em}

\noindent By the above labels, we obtain that

$\omega_{t}(v_{s,1})=\omega_{t}(v_{i,j})=2nk+3n-1$ for $1\leq s,i\leq n$, $2\leq j\leq k$ and odd $s$,

$\omega_{t}(v_{s,1})=\frac{4nk+9n-1}{2}$ for $1\leq s\leq n$ and even $s$,

$\omega_{t}(u_{i})=\frac{nk^{2}+4nk+7n-k-1}{2}$ for $1\leq i\leq n$.

\noindent These are three distinct colors, and so $\chi_{lat}(F_{n,k})\leq 3$ when $n$ and $k$ are odd.

Subcase 2.2.  $n$ is even. \vspace{0.4em}

We give the following labeling\\

$g(v_{i,1}v_{i+1,1})=\begin{cases}
                       \frac{2n-1-i}{2}, & \text{if $1\leq i<n$ and odd $i$},\\[4pt]
                       \frac{i}{2}, & \text{if $1\leq i<n$ and even $i$}.\end{cases}$\vspace{0.4em}

$ g(u_{i}v_{i,1})=\begin{cases}
           \frac{2nk+3n-1-i}{2}, & \text{if $1\leq i\leq n$ and odd $i$},\\[4pt]
           \frac{2nk+4n-i}{2}, & \text{if $1\leq i\leq n$ and even $i$}.\end{cases}$\vspace{0.4em}

$g(u_{i})=\begin{cases}
    \frac{2nk+5n-1+i}{2}, & \text{if $1\leq i\leq n$ and odd $i$},\\[4pt]
    \frac{2nk+4n-2+i}{2}, & \text{if $1\leq i\leq n$ and even $i$}.\end{cases}$\vspace{0.4em}

$g(v_{i,1})=\begin{cases}
           \frac{2nk+n-3+i}{2}, & \text{if $1\leq i<n$ and odd $i$},\\[4pt]
            \frac{2nk-2+i}{2},    & \text{if $1\leq i<n$ and even $i$},\\[4pt]
                       nk+n-1, &\text{if } i=n.\end{cases}$\vspace{0.4em}

\noindent From the above labels in this subcase, it can be calculated that

$\omega_{t}(v_{s,1})=\omega_{t}(v_{i,j})=2nk+3n-1$ for $1\leq s,i\leq n$, $2\leq j\leq k$ and even $s$,

$\omega_{t}(v_{s,1})=2nk+3n-3$ for $1\leq s\leq n$ and odd $s$,

$\omega_{t}(u_{i})=\frac{nk^{2}+4nk+7n-k-1}{2}$ for $1\leq i\leq n$.

\noindent Obviously, there are three different colors used by the local antimagic total labeling, and so $\chi_{lat}(F_{n,k})\leq 3$ for even $n$ and odd $k$. In this case, it is concluded that $\chi_{lat}(F_{n,k})\leq 3$ for odd $k$. The conclusion is thus proved.\end{proof}

\subsection{The local antimagic chromatic number of edge corona-product graph $G\diamond H$}%$\chi_{la}()$

In this subsection, we determine the local antimagic chromatic number of the graph $G\diamond H$, and here $G$ is a star $S_{k}$ or a double star $S_{k_{1},k_{2}}$, while $H$ is the empty graph $\overline{K_{r}}$ or the complete graph $K_{2}$.

\subsubsection{$\chi_{la}(G\diamond \overline{K_{r}})$}

This subsection gives the local antimagic chromatic number of $G\diamond \overline{K_{r}}$, where $G$ is $S_k$ or $S_{k_{1},k_{2}}$.

For the edge-corona product of the star $S_{k}$ and the empty $\overline{K_{r}}$, let $V(S_k)=\{c,v_{1},v_{2},\cdots,v_{k}\}$ and $V_{1}=\{u_{i}^{j}|1\leq i\leq k,1\leq j\leq r\}$  represent the vertex sets of  $S_k$ and $n$ copies of $\overline{K_{r}}$, respectively. The vertex set and edge set of $S_{k}\diamond \overline{K_{r}}$ are given by $V(S_{k}\diamond \overline{K_{r}})=V(S_{k})\cup V_1$ and $E(S_{k}\diamond \overline{K_{r}})=\{cv_{i},cu_{i}^{j},v_{i}u_{i}^{j}|1\leq i\leq k,1\leq j\leq r\}$. The graph $S_{k}\diamond \overline{K_{r}}$ has order $kr+k+1$ and size $(2r+1)k.$

A double star, $S_{k_{1},k_{2}}\,(k_{1}\leq k_{2})$, is obtained by connecting an edge between the centers of two star graphs $S_{k_{1}}$ and $S_{k_{2}}$.
Let $V(S_{k_{1},k_{2}})=\{ c_{1},c_{2},v_{i}|1\leq i\leq k_{1}+k_{2}\}$ and $E(S_{k_{1},k_{2}})=\{c_{1}c_{2},c_{1}u_{i},c_{2}u_{s}|1\leq i\leq k_{1},k_{1}+1\leq s\leq k_{1}+k_{2}\}$ be the vertex set and edge set of the double star $S_{k_{1},k_{2}}$, respectively.
Consider the edge-corona product of a double star and an empty graph, that is, $S_{k_{1},k_{2}}\diamond \overline{K_{r}}$, whose vertex set and edge set is respectively $V(S_{k_{1},k_{2}}\diamond \overline{K_{r}})=V(S_{k_{1},k_{2}})\cup \{u_{i}^{j}|1\leq i\leq k_{1}+k_{2}+1,1\leq j\leq r\}$ and $E(S_{k_{1},k_{2}}\diamond \overline{K_{r}})=E(S_{k_{1},k_{2}})\cup \{c_{1}u_{1}^{j},c_{1}u_{i}^{j},v_{i-1}u_{i}^{j},c_{2}u_{1}^{j},c_{2}u_{s}^{j},v_{s-1}u_{s}^{j}|2\leq i\leq k_{1}+1,k_{1}+2\leq s\leq k_{1}+k_{2}+1,1\leq j\leq r\}$.

%{\it l.a.c.n.} of graphs $G\diamond O_{r}$, where $G$ is a family of trees, such as $S_n$, $S_{n_{1},n_{2}}$.%and $P_n$
First, we define the labeling matrix $\boldsymbol{\mathcal{M}}_{S_{k}}$ of the star $S_{k}$ as follows:
\begin{equation*}
	\boldsymbol{\mathcal{M}}_{S_{k}}={\small
		\begin{pmatrix}
			*& \boldsymbol{a}_{0} \\
			\boldsymbol{a}_{0}^{\rm{T}} & \bigstar
	\end{pmatrix}}
\end{equation*}
where $\boldsymbol{a}_{0}=(1,2,\cdots,k)$, and $\boldsymbol{a}_{0}^{\rm{T}}$ is the transpose of $\boldsymbol{a}_{0}$. The entries of the matrix $\boldsymbol{\mathcal{M}}_{S_{k}}$ are ordered by $c,v_{1},\ldots,v_{k}$. This matrix induces a local antimagic coloring labeling for $S_{k}$, utilizing $k+1$ colors, and hence $\chi_{la}(S_{k})=k+1$ by Lemma \ref{th}.

Let $\boldsymbol{\mathcal{M}}$ denote the labeling matrix of the graph $S_{k}\diamond \overline{K_{r}}$ with entries ordered as $c,v_{1},\ldots,v_{k},u_{1}^{1},\ldots,u_{k}^{1},u_{1}^{2},\ldots,u_{k}^{2},\ldots\ldots,u_{1}^{r},\ldots,u_{k}^{r}$. We partition $\boldsymbol{\mathcal{M}}$ in the following way,
\begin{equation*}
\boldsymbol{\mathcal{M}}=\begin{pmatrix}
\,\overline{\boldsymbol{\mathcal{M}}}\,\, \\  \,\underline{\boldsymbol{\mathcal{M}}}\,\,
\end{pmatrix}
=\begin{pmatrix}
  \boldsymbol{M}_{1}& \boldsymbol{M}_{2} \\
  \boldsymbol{M}_{2}^{\rm{T}} & \bigstar
\end{pmatrix},
\end{equation*}
where $\overline{\boldsymbol{\mathcal{M}}}=(\boldsymbol{M}_{1},\boldsymbol{M}_{2}) \in\mathbb{R}^{(k+1)\times (k+1+kr)}$ and $\underline{\boldsymbol{\mathcal{M}}}=(\boldsymbol{M}_{2}^{\rm{T}},\bigstar) \in\mathbb{R}^{kr\times (k+1+kr)}$ represent the upper and lower block matrices of $\boldsymbol{\mathcal{M}}$, respectively. $\boldsymbol{M}_{1}$ is a $(k+1)\times (k+1)$ square matrix, whose order of entries corresponding the vertices is $c,v_{1},v_{2},\cdots,v_{n}$, and $\boldsymbol{M}_{2}$ is a matrix of dimension $(k+1)\times kr$, whose row order of entries corresponding the vertices is $u_{1}^{1},u_{2}^{1},\cdots,u_{k}^{1},\cdots\,\cdots,u_{1}^{r},u_{2}^{r},\cdots,u_{k}^{r}$.
Notably, the upper block matrix $\overline{\boldsymbol{\mathcal{M}}}$ suffices for our discussion.

\begin{theorem}\label{th1}
For the star $S_{k}$ and empty graph $\overline{K_{r}}$, we have $\chi_{la}(S_{k}\diamond \overline{K_{r}})=3$.
\end{theorem}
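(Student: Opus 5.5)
Since every edge $cv_i$ of $S_k$ together with any vertex $u_i^j$ of its copy of $\overline{K_r}$ forms a triangle $c\,v_i\,u_i^j$, we have $\chi(S_{k}\diamond \overline{K_{r}})=3$. As noted in the introduction, $\chi_{la}(G)\ge\chi(G)$, so $\chi_{la}(S_{k}\diamond \overline{K_{r}})\geq 3$. The whole work is therefore the upper bound: we will build a labeling matrix $\boldsymbol{\mathcal{M}}$ (equivalently its upper block $\overline{\boldsymbol{\mathcal{M}}}$) that induces exactly three colours. These colours are $\omega(c)$, a common value $A=\omega(v_i)$ for all $i$, and a common value $B=\omega(u_i^j)$ for all $i,j$. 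This is a proper colouring provided $\omega(c)\notin\{A,B\}$ and $A\neq B$, because the $v_i$ are pairwise non-adjacent and so are the $u_i^j$.

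Write $a_i=f(cv_i)$, $x_{i,j}=f(cu_i^j)$ and $y_{i,j}=f(v_iu_i^j)$. The required conditions are
$$x_{i,j}+y_{i,j}=B \quad\text{and}\quad a_i+\sum_{j=1}^r y_{i,j}=A \quad\text{for all } i,j.$$
We put $a_i=i$, so that the first row of $\boldsymbol{M}_1$ is $\boldsymbol{a}_0$, exactly as in $\boldsymbol{\mathcal{M}}_{S_k}$. The remaining labels $[k+1,(2r+1)k]$ are split into the $rk$ complementary pairs $\{k+t,\,(2r+1)k+1-t\}$ for $t\in[1,rk]$, each pair summing to $B=(2r+2)k+1$. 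In every pair one member becomes $y_{i,j}$ and the other $x_{i,j}$, so the first condition holds automatically.

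For the second condition we fill the $k\times r$ array $(y_{i,j})$ column by column, imitating the firecracker construction. A column receives either an increasing run $c_j+i$ or a decreasing run $c_j'-i$ of values drawn from distinct pairs. Decreasing columns contribute $-i$ to the row sum and increasing columns contribute $+i$. We need the net coefficient of $i$ in $\sum_j y_{i,j}$ to be $-1$, so that it cancels $a_i=i$.
\begin{itemize}
\item For $r=1$, take $y_{i,1}=3k+1-i$ and $x_{i,1}=k+i$. This gives $A=3k+1$ and $B=4k+1$.
\item For odd $r$, pair the remaining $r-1$ columns as increasing/decreasing couples, which contribute a constant to every row.
\item For even $r$, use three special columns with net $-1$ (for instance increasing, decreasing, decreasing, built from the labels of $r-1$ suitably chosen consecutive blocks of $k$ pairs), and pair the rest.
\end{itemize}

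The main obstacle is this bookkeeping. Every pair index $t\in[1,rk]$ must be used exactly once, with the chosen member ranging over consecutive blocks of $k$, while each column stays monotone with slope $\pm1$. The even-$r$ case, where a single "odd" column is not available, is the delicate one. I would first try blocks $t\in[(j-1)k+1,jk]$ for column $j$, taking the small member in some columns and the large member in others, and then adjust one or two columns by a shift of the form $i\mapsto k+1-i$ inside a block.

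Finally we verify the distinctness conditions. We have $\omega(c)=\sum_i a_i+\sum_{i,j}x_{i,j}$, a sum of $k(r+1)$ labels. It exceeds both $A$ (a sum of $r+1$ labels) and $B\le 2(2r+1)k$ once $k\ge2$; the cases $k=1$ are small and are checked directly. We also check $A\neq B$ by computing $A$ explicitly from the construction, and if necessary we swap the roles of small and large members in one pair of columns to shift $A$.
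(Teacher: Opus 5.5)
Your set-up matches the paper's: $f(cv_i)=i$, and each $u_i^j$ receives a complementary pair summing to $B=(2r+2)k+1$. Your odd-$r$ scheme (one decreasing column plus increasing/decreasing couples) is exactly what the paper's blocks $\boldsymbol{a}_j,\boldsymbol{A}_{r+1-j}$ realize.

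The even-$r$ case, however, cannot be done with the tools you list. If each column of $(y_{i,j})$ is a monotone run of $k$ consecutive integers, column $j$ contributes $c_j+\epsilon_j i$ with $\epsilon_j=\pm1$. Then $\omega(v_i)=\mathrm{const}+(1+\sum_j\epsilon_j)\,i$. For even $r$ we have $\sum_j\epsilon_j\equiv r\equiv 0\pmod 2$, so this can never equal $-1$ when $k\ge 2$.

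Neither of your repair devices touches this parity. Choosing small versus large members within a block still gives a run of consecutive integers. The reversal $i\mapsto k+1-i$ changes $\sum_j\epsilon_j$ by $\pm2$. Your count is also off: after three special columns, an odd number $r-3$ of columns remains to be ``paired''.

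The missing idea is a column of even slope, obtained by mixing small and large members inside one block. The paper takes the innermost $k$ pairs, whose labels fill $[rk+1,rk+2k]$, and sets $f(v_iu_i^{j_0})=rk+2k+1-2i$ and $f(cu_i^{j_0})=rk+2i$. Every $u_i^{j_0}$ still sums to $B$, but this $y$-column has slope $-2$. The other $r-1$ columns (an odd number) are then runs with net slope $+1$. This is the paper's replacement of $\boldsymbol{b}_{\frac r2+1},\boldsymbol{A}_{\frac r2}$ by $\boldsymbol{b}'_{\frac r2+1},\boldsymbol{A}'_{\frac r2}$, with one extra block reversal when $\frac r2$ is even.

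Your final distinctness check also fails as stated: $\omega(c)>A$ is not automatic for $k\ge 2$. Since $\omega(c)=k(k+1)+rkB-kA$, it depends on which member of each pair goes to the $v_i$-side. Take $k=2$, $r=3$, with every $y_{i,j}$ the large member and columns $(14,13)$, $(11,12)$, $(10,9)$ (slopes $-1,+1,-1$). This fits your scheme and gives $\omega(v_1)=\omega(v_2)=36$. But also $\omega(c)=1+2+(3+4+5+6+7+8)=36$, so the adjacent vertices $c$ and $v_1$ receive the same colour. You must fix the choice of members and compute. The paper's balanced choice (labels of $v_iu_i^j$ taken from the blocks $[2mk+1,(2m+1)k]$) gives $\omega(c)=48\neq 30=\omega(v_i)$ in this instance.
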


\begin{proof}
The lower bound of $\chi_{la}(S_{k}\diamond \overline{K_{r}})$ is 3, as the proper vertex-coloring of the subgraph $K_{3}$ of $S_{k}\diamond \overline{K_{r}}$ requires 3 colors. Then, proceed with the following formulas:
\begin{equation*}
\begin{array}{ll}
\boldsymbol{a}_{j}=(2jk-k)\boldsymbol{e}+\boldsymbol{a}_{0},  & \text{for odd $j$ and $1\leq j\leq r$},\\[3pt]%&
\boldsymbol{a}_{j}=(2jk+1)\boldsymbol{e}-\boldsymbol{a}_{0},  & \text{for even $j$ and $1\leq j\leq r$}, \\[3pt]
\boldsymbol{A}_{j}=(2jk+k+1)\boldsymbol{I}-\boldsymbol{\Omega}, & \text{for odd $j$ and $1\leq j\leq r$},\\[3pt]
\boldsymbol{A}_{j}=2jk\boldsymbol{I}+\boldsymbol{\Omega}, & \text{for even $j$ and $1\leq j\leq r$},
\end{array}
\end{equation*}
where $\boldsymbol{a}_{0}=(1,2,\cdots,k)$ and $\boldsymbol{\Omega}=\textbf{diag}(1,2,\cdots,k)$, $\boldsymbol{e}$ denotes the vector of all 1's and $\boldsymbol{I}$ the identity matrix.

For odd $r$, the upper block matrix $\overline{\boldsymbol{\mathcal{M}}}_{1}$ of the labeling matrix $\boldsymbol{\mathcal{M}}_{1}$ for $S_{k}\diamond \overline{K_{r}}$ is given by
\begin{equation*}
\overline{\boldsymbol{\mathcal{M}}}_{1}=\begin{pmatrix}\boldsymbol{M}_{1} &\boldsymbol{M}_{2}\end{pmatrix}=
{\small{\left(\begin{array}{cc:ccccc}
 * & \boldsymbol{a}_{0} & \boldsymbol{a}_{1} & \boldsymbol{a}_{2} &\cdots & \boldsymbol{a}_{r-1} & \boldsymbol{a}_{r}\\[4pt]
 \boldsymbol{a}_{0}^{\rm{T}} &\bigstar & \boldsymbol{A}_{r} &\boldsymbol{A}_{r-1} &\cdots&\boldsymbol{A}_{2}&\boldsymbol{A}_{1}\\ \end{array}\right)}},
\end{equation*}
yielding the lower block matrix $\underline{\boldsymbol{\mathcal{M}}}_{1}=(\boldsymbol{M}_{2}^{\rm{T}},\bigstar)$.

Accordingly, the row sums of the matrix $\boldsymbol{\mathcal{M}}_{1}$ are
\begin{equation*}
\begin{array}{l}
(1)\,\text{for $\mathcal{R}_{1}$},\,\,\omega(c)=k^{2}r^{2}+\frac{(k^{2}+1)(r+1)}{2},\\[6pt]
(2)\,\text{from $\mathcal{R}_{2}$ to $\mathcal{R}_{k+1}$},\,\,\omega(v_{i})=kr^{2}+\frac{3kr+k+r+1}{2} \quad\quad\text{for $1\leq i\leq k$},\\[6pt]
(3)\,\text{from $\mathcal{R}_{k+2}$ to $\mathcal{R}_{kr+k+1}$},\,\,\omega(u_{i}^{j})=2rk+2k+1 \quad\text{for $1\leq i\leq k$ and $1\leq j\leq r$}.
\end{array}
\end{equation*}

For even $r$, certain transformations are required to construct the corresponding labeling matrix. Initially, for each $j\,(1\leq j\leq r)$, the vector $\boldsymbol{b}_{j}$ is obtained by reversing the elements of the vector $\boldsymbol{a}_{j}$. Specifically, if $\boldsymbol{a}_{j}=(a_{1},a_{2},\cdots,a_{k})$, then $\boldsymbol{b}_{j}=(a_{k},a_{k-1},\cdots,a_{1})$. Subsequently, make the following adjustments.
%the first entry in $\boldsymbol{a}_{j}$  is placed in the last position in $\boldsymbol{b}_{j}$, the second entry in the second-to-last position, and so on until the last entry in $\boldsymbol{a}_{j}$ is placed in the first position in $\boldsymbol{b}_{j}$.

Case 1. If $\frac{r}{2}$ is odd, adjust vector $\boldsymbol{b}_{\frac{r}{2}+1}=\big(rk+k+1,rk+k+2,\cdots,rk+2k-1,rk+2k\big)$ and the diagonal matrix $\boldsymbol{A}_{\frac{r}{2}}={\textbf{diag}}\big(rk+k,rk+k-1,\cdots,rk+2,rk+1\big)$ to $\boldsymbol{b}^{'}_{\frac{r}{2}+1}=\big(rk+2,rk+4,\cdots,rk+2k -2,rk+2k \big)$ and $\boldsymbol{A}^{'}_{\frac{r}{2}}={\textbf{diag}}\big(rk+2k-1,rk+2k-3,\cdots,rk+3,rk+1\big)$,  whose entries are increasing and decreasing by 2, respectively. So, the upper block matrix of the labeling matrix $\boldsymbol{\mathcal{M}}_{2'}$ for this case is
\begin{equation*}
\overline{\boldsymbol{\mathcal{M}}}_{2'}=\begin{pmatrix} \boldsymbol{M}_{1} &   \boldsymbol{M}_{2}^{'}\end{pmatrix}=
{\small{\left(\begin{array}{cc:cccccc}
 * & \boldsymbol{a}_{0} & \boldsymbol{b}_{1} &\cdots  &\boldsymbol{b}_{\frac{r}{2}} & \boldsymbol{b}'_{\frac{r+2}{2}} &\cdots &\boldsymbol{b}_{r}\\[6pt]
 \boldsymbol{a}_{0}^{\rm{T}} &\bigstar &\boldsymbol{A}_{r}   & \cdots & \boldsymbol{A}_{\frac{r+2}{2}} & \boldsymbol{A}'_{\frac{r}{2}} & \cdots & \boldsymbol{A}_{1}\end{array}\right)}},
\end{equation*}
and we obtain the lower block matrix $\underline{\boldsymbol{\mathcal{M}}}_{2'}=(\boldsymbol{M}_{2}^{'\,\rm{T}},\bigstar)$.

Case 2. If $\frac{r}{2}$ is even, analogous adjustments are made to the vector $\boldsymbol{b}_{\frac{r}{2}+1}=\big(rk+2k,\cdots,rk+k+1\big)$ and the diagonal matrix $\boldsymbol{A}_{\frac{r}{2}}={\textbf{diag}}\big(rk+1,\cdots,rk+k\big)$ to $\boldsymbol{b}^{'}_{\frac{r}{2}+1}$ and $\boldsymbol{A}^{'}_{\frac{r}{2}}$, respectively. The vector $\boldsymbol{b}^{'}_{\frac{r}{2}+2}$ is reversed to return to $\boldsymbol{a}_{\frac{r}{2}+2}$, and the diagonal entries of $\boldsymbol{A}_{\frac{r}{2}-1}$ are reordered to form $\boldsymbol{A}^{'}_{\frac{r}{2}-1}$. Thus, the upper block matrix of the labeling matrix $\boldsymbol{\mathcal{M}}_{2''}$ for this case is
%Again, the same reversal operation is performed on the vector $\boldsymbol{b}^{'}_{\frac{r}{2}+2}$, which returns to its original vector $\boldsymbol{a}_{\frac{r}{2}+2}$. The reordering of the diagonal entries within a diagonal matrix $\boldsymbol{A}_{\frac{r}{2}-1}$ gives rise to a new diagonal matrix $\boldsymbol{A}^{'}_{\frac{r}{2}-1}$, where the entries along the diagonal are arranged in a reversed order. Thus the upper block matrix of the labeling matrix $\boldsymbol{\mathcal{M}}_{2''}$ for this case is as follows,
\begin{equation*}
\overline{\boldsymbol{\mathcal{M}}}_{2''}=\begin{pmatrix} \boldsymbol{M}_{1} &   \boldsymbol{M}_{2}^{''}\end{pmatrix}=
{\small{\left(\begin{array}{cc:cccccc}
 * & \boldsymbol{a}_{0} & \boldsymbol{b}_{1} &\cdots  &\boldsymbol{b}'_{\frac{r+2}{2}} & \boldsymbol{b}'_{\frac{r+4}{2}} &\cdots &\boldsymbol{b}_{r}\\[6pt]
 \boldsymbol{a}_{0}^{\rm{T}} &\bigstar &\boldsymbol{A}_{r}   & \cdots & \boldsymbol{A}_{\frac{r}{2}} & \boldsymbol{A}'_{\frac{r-2}{2}} & \cdots & \boldsymbol{A}_{1}\end{array}\right)}},
\end{equation*}

The corresponding row sums for even $r$ are calculated as
\begin{equation*}
\begin{array}{l}
(1)\,\text{for $\mathcal{R}_{1}$},\,\,\omega(c)=k^{2}r^{2}+k+\frac{(k^{2}+k)r}{2},\\[6pt]
(2)\,\text{from $\mathcal{R}_{2}$ to $\mathcal{R}_{k+1}$},\,\,\omega(v_{i})=k(r^{2}+1)+\frac{3kr+r}{2}\quad\quad\text{for $1\leq i\leq k$},\\[6pt]
(3)\,\text{from $\mathcal{R}_{k+2}$ to $\mathcal{R}_{kr+k+1}$},\,\,\omega(u_{i}^{j})=2rk+2k+1\quad\text{for $1\leq i\leq k$ and $1\leq j\leq r$}.
\end{array}
\end{equation*}

To sum up, the labeling matrices derived above, based on their row sums, confirm a local antimagic labeling for the graph $S_{k}\diamond \overline{K_{r}}$, satisfying $\chi_{la}(S_{k}\diamond \overline{K_{r}})\leq 3$. This completes the proof for $\chi_{la}(S_{k}\diamond \overline{K_{r}})=3$.
\end{proof}

\begin{example}
   For star $S_{3}$ and empty graph $\overline{K_{6}}$,  $\chi_{la}(S_{3}\diamond \overline{K_{6}})=3$.
\end{example}

We establish that $\chi_{la}(S_{3}\diamond \overline{K_{6}})\geq 3$, and proceed to construct the upper block matrix of  the labeling matrix $\boldsymbol{\mathcal{M}}=(\overline{\boldsymbol{\mathcal{M}}},\underline{\boldsymbol{\mathcal{M}}})^{\rm{T}}$ for $S_{3}\diamond \overline{K_{6}}$ by
\begin{equation*}
\begin{aligned}
\overline{\boldsymbol{\mathcal{M}}}&=\begin{pmatrix}\boldsymbol{M}_{1} &\boldsymbol{M}_{2}\end{pmatrix}\\
&={\tiny{\left(\begin{array}{cccc:ccc ccc ccc ccc ccc ccc}
* &1 &2 & 3 & 6 & 5 & 4 & 10 & 11 & 12 & 18 & 17 & 16 & 20 & 22 & 24 & 30 &29 & 28& 34 & 35 & 36 \\[2pt]
1 & * & * & * &37 & * & *&33& * & * & 25 & * & * & 23& * &* & 13 & * & * & 9 & *& *\\
2 & * & * & * & * &38 & *& *&32 & * & * & 26 & * & *& 21 & * & *&14 & * & * & 8& *\\
3 & * & * & * & * & * &39& *& * & 31 & *  & * & 27 & *& * & 19 & * & *&15 & * & *& 7
\end{array}\right)}}
\end{aligned}
\end{equation*}
and the corresponding lower block matrix is $\underline{\boldsymbol{\mathcal{M}}}=(\boldsymbol{M}_{2}^{\rm{T}},\bigstar)$. From the labeling matrix $\boldsymbol{\mathcal{M}}$, we have
\begin{equation*}
\begin{array}{l}
\omega(c)=363\text{ for $\mathcal{R}_{1}$},\quad\omega(v_{1})=\omega(v_{2})=\omega(v_{3})=141 \text{ from $\mathcal{R}_{2}$ to $\mathcal{R}_{4}$},\\[6pt]
\omega(u_{i}^{j})=43\,\,\text{from $\mathcal{R}_{5}$ to $\mathcal{R}_{22}$}\,\,\text{for $1\leq i\leq 3$ and $1\leq j\leq 6$}.
\end{array}
\end{equation*}
The labeling matrix induces a local antimagic labeling of $S_{3}\diamond \overline{K_{6}}$ using 3 colors. So, $\chi_{la}(S_{3}\diamond \overline{K_{6}})\geq 3$.

For the double star $S_{k_{1},k_{2}}$, suppose its labeling matrix is $\boldsymbol{\mathcal{M}}\in \mathbb{R}^{t\times t}$, $t=(k_{1}+k_{2}+1)(r+1)+1$, whose entries of vertices are ordered as $c_{1},c_{2},v_{1},\cdots,v_{k_{1}},v_{k_{1}+1},\cdots,v_{k_{1}+k_{2}},u_{1}^{1}$, $u_{2}^{1},\cdots,u_{k_{1}+k_{2}+1}^{1},\cdots,u_{1}^{r},u_{2}^{r},\cdots,u_{k_{1}+k_{2}+1}^{r}$. Let us partition this matrix by the similar way as follows:
\begin{equation}
\boldsymbol{\mathcal{M}}=\left(\begin{array}{cc}
\overline{\boldsymbol{\mathcal{M}}}\\\underline{\boldsymbol{\mathcal{M}}}
\end{array}\right)
=\left(\begin{array}{cc}
\boldsymbol{M}_{1} & \boldsymbol{M}_{2}\\
\boldsymbol{M}_{2}^{{\rm T}} &\bigstar
\end{array}\right),
\end{equation}
where $\overline{\boldsymbol{\mathcal{M}}}\in \mathbb{R}^{(k_{1}+k_{2}+2)\times t}$ and $\underline{\boldsymbol{\mathcal{M}}}\in \mathbb{R}^{(k_{1}r+k_{2}r+r)\times t}$ represent the upper and lower block matrices of $\boldsymbol{\mathcal{M}}$, respectively. Here,
$\boldsymbol{M}_{1}\in \mathbb{R}^{(k_{1}+k_{2}+2)\times (k_{1}+k_{2}+2)}$, $\boldsymbol{M}_{2}\in \mathbb{R}^{(k_{1}+k_{2}+2)\times(k_{1}r+k_{2}r+r)}$. We only need the upper block matrix $\overline{\boldsymbol{\mathcal{M}}}$.

\begin{theorem}
For the double star $S_{k_{1},k_{2}}$, the local antiamgic chromatic number satisfies $3\leq\chi_{la}(S_{k_{1},k_{2}}\diamond \overline{K_{r}})\leq 4$.
\end{theorem}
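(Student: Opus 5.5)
The lower bound is immediate, as in Theorem \ref{th1}. For any edge $c_{1}v_{i}$ with $1\leq i\leq k_{1}$ and any $u_{i+1}^{j}$, the vertices $c_{1},v_{i},u_{i+1}^{j}$ induce a triangle $K_{3}$. Any proper vertex-coloring therefore needs 3 colors, so $\chi_{la}(S_{k_{1},k_{2}}\diamond \overline{K_{r}})\geq\chi\geq 3$.

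For the upper bound I will build the upper block $\overline{\boldsymbol{\mathcal{M}}}=(\boldsymbol{M}_{1},\boldsymbol{M}_{2})$ by imitating the construction for $S_{k}\diamond\overline{K_{r}}$. The idea is to regard the double star's edges as $K=k_{1}+k_{2}+1$ "spokes": the central edge $c_{1}c_{2}$ together with the pendant edges $c_{1}v_{i}$ and $c_{2}v_{s}$. Each spoke $xy$ carries $r$ triangles $x\,y\,u^{j}$.

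\begin{enumerate}
\item Label the $K$ base edges with $\boldsymbol{a}_{0}=(1,\dots,K)$, putting the central edge first.
\item For each copy $j$, label the edges from $\{c_{1},c_{2}\}$ to the $u^{j}$'s by $\boldsymbol{a}_{j}$, and the edges from the other endpoint by the complementary diagonal $\boldsymbol{A}_{r+1-j}$, using the same alternating formulas as in Theorem \ref{th1} with $k$ replaced by $K$.
\item For $u_{1}^{j}$, the copy attached to $c_{1}c_{2}$, both neighbours are centres. Here I pair one label from the ``$\boldsymbol{a}$'' family with one from the ``$\boldsymbol{A}$'' family, so the sum stays $2rK+2K+1$.
\end{enumerate}

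With this design every $u_{i}^{j}$ has the constant weight $2rK+2K+1$. Each pendant $v$ then receives the same weight, because its row consists of one entry of $\boldsymbol{a}_{0}$ plus a column of the diagonal blocks; these sums are constant exactly as in Theorem \ref{th1}, using the even-$r$ reversal/adjustment trick when needed. That gives at most two colors so far. The centres $c_{1}$ and $c_{2}$ collect large, unequal sums, which yields at most two further colors, so four colors in total.

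It remains to check the adjacencies $c_{1}\sim c_{2}$, $c_{t}\sim v$ and $c_{t}\sim u$. Both $\omega(c_{1})$ and $\omega(c_{2})$ exceed $\omega(v)$ and $\omega(u)$, since each centre is incident with at least roughly $r+1$ labels of size about $rK$, while $\omega(v)$ is of the same order as in Theorem \ref{th1}. Also $\omega(v)\neq\omega(u)$, with $\omega(v)>\omega(u)$ by the computation of Theorem \ref{th1}.

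The main obstacle is guaranteeing $\omega(c_{1})\neq\omega(c_{2})$. These two weights are sums over different sets of spokes, of sizes $k_{1}+1$ and $k_{2}+1$, and when $k_{1}=k_{2}$ they may collide. I would first try assigning the central spoke the largest labels within each block, placed on the $c_{1}$ side. If equality still occurs in a residual case, I would swap two labels within one $\boldsymbol{a}_{j}$ block between a $c_{1}$-spoke and a $c_{2}$-spoke, compensating in the matching diagonal block so that the $u$ and $v$ sums are preserved. That only shifts $\omega(c_{1})-\omega(c_{2})$ by a nonzero amount. The parity casework on $r$ ($r$ odd, $r/2$ odd, $r/2$ even) is carried over from Theorem \ref{th1}. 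Together these give $\chi_{la}\leq 4$.
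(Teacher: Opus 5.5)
Your construction is the paper's own. The central edge is spoke $1$; on $u_1^p$ the vertex $c_1$ gets $b_p^1$ and $c_2$ gets $B_{r+1-p}^1$; everything else is the block pattern of Theorem~\ref{th1} with $k$ replaced by $K=k_1+k_2+1$. The triangle lower bound is also the same.

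However, the step you single out as the main obstacle needs no repair. Let $s(i)=\sum_{p=0}^{r}b_p(i)$ be the total of the centre-side labels of spoke $i$ (your $\boldsymbol{a}_j$). Let $\omega(v)$ be the common value of $b_0(i)$ plus the $r$ diagonal entries in position $i$. Summing copy by copy gives $s(i)+\omega(v)=2i+r\,\omega(u)$. So $s$ is increasing, and $s(1)<\omega(v)$ follows from the row sums. Since $\omega(c_1)=\sum_{i=1}^{k_1+1}s(i)$ and $\omega(c_2)=\omega(v)+\sum_{i=k_1+2}^{K}s(i)$, pair $s(1)$ with $\omega(v)$ and $s(i)$ with $s(i+k_1)$. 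This gives $\omega(c_2)>\omega(c_1)$ for every $k_1\le k_2$, including $k_1=k_2$. Your swap-and-compensate fallback would not work anyway: restoring the $u$-weights inside one copy moves at least one pendant weight off the common value.

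The genuine gap is the check you wave through, namely that both centres exceed $\omega(v)$ and $\omega(u)$ "by orders of magnitude". That heuristic cannot work. A single $s(i)$ is already of the same order $r^2K$ as $\omega(v)$, and $c_1$ may carry only two spokes. For odd $r$ the exact values are:
\begin{itemize}
\item $\omega(u)=2(r+1)K+1$;
\item $\omega(v)=\frac{(r+1)((2r+1)K+1)}{2}$;
\item $\omega(c_1)=(k_1+1)(C+k_1+2)$, with $C=\frac{(r+1)(2r-1)K+r-1}{2}$.
\end{itemize}
For $r=1$ these become $4K+1$, $3K+1$ and $(k_1+1)(K+k_1+2)$. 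So even your claim $\omega(v)>\omega(u)$ fails there, though the two remain distinct.

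Concretely, $(k_1,k_2,r)=(1,3,1)$ gives $\omega(c_1)=1+2+6+7=16=2+14=\omega(v_1)$. Likewise $(k_1,k_2,r)=(2,8,1)$ gives $\omega(c_1)=1+2+3+12+13+14=45=13+32=\omega(u_2^1)$. Since $c_1$ is adjacent to both $v_1$ and $u_2^1$, the labeling is not local antimagic in these cases.

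The paper's proof uses the same matrices and only lists the row sums, so it has the same hole. With those sums the centres are separated from $u$ and $v$ once $r\ge 2$, but $r=1$ must be handled separately.

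One fix is to permute spoke indices so that $c_1$'s $k_1+1$ spokes, the central one included, are $K-k_1,\dots,K$. This preserves every $u$- and $v$-weight. It gives $\omega(c_1)=(k_1+1)(3K-k_1)\ge 6K-2>4K+1$ and $\omega(c_2)\ge 4K+3$. Moreover $\omega(c_1)-\omega(c_2)=2k_1^2+2k_1k_2-2k_2^2+2k_1-2k_2-1$ is odd, hence nonzero.
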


\begin{proof}
The lower bound of $\chi_{la}(S_{k_{1},k_{2}}\diamond \overline{K_{r}})$ is 3, since the proper vertex-coloring of the subgraph $K_{3}$ of $S_{k_{1},k_{2}}\diamond \overline{K_{r}}$ requires at least 3 colors. We aim to achieve a local antimagic labeling of $S_{k_{1},k_{2}}\diamond \overline{K_{r}}$ from its labeling matrix using as fewer colors as possible. Start with the following formulas
\begin{equation*}
  \begin{array}{ll}
  \boldsymbol{b}_{p}=(2p-1)(k_{1}+k_{2}+1)\boldsymbol{e}+\boldsymbol{b}_{0}, &\text{for odd $p$ and $1\leq p\leq r$},\\[8pt]
  \boldsymbol{b}_{p}=(2p(k_{1}+k_{2}+1)+1)\boldsymbol{e}-\boldsymbol{b}_{0}, &\text{for even $p$ and $1\leq p\leq r$}.\\[8pt]
  \boldsymbol{B}_{p}=[(2p+1)(k_{1}+k_{2}+1)+1]\boldsymbol{I}-\boldsymbol{\Omega}_{0}, &\text{for odd $p$ and $1\leq p\leq r$}.\\[8pt]
  \boldsymbol{B}_{p}=2p(k_{1}+k_{2}+1)\boldsymbol{I}+\boldsymbol{\Omega}_{0}, &\text{for even $p$ and $1\leq p\leq r$}.
  \end{array}
\end{equation*}
where $\boldsymbol{b}_{0}=(1,2,\cdots,k_{1}+k_{2}+1)$ and $\boldsymbol{\Omega}_{0}={\textbf{diag}}(1,2,\cdots,k_{1}+k_{2}+1)$. Each vector and matrix is divided into three parts, such as $\boldsymbol{b}_{p}=(b_{p}^{1},\boldsymbol{b}_{p}^{2},\boldsymbol{b}_{p}^{3})$. Specifically, $b_{p}^{1}$ is the first component of $\boldsymbol{b}_{p}$, $\boldsymbol{b}_{p}^{2}$ and $\boldsymbol{b}_{p}^{3}$ includes the middle $k_{1}$ and last $k_{2}$ components of $\boldsymbol{b}_{p}$. Similarly, divide each $\boldsymbol{B}_{p}$ into the single element ${B}_{p}^{1}$ and two diagonal matrices $\boldsymbol{B}_{p}^{2}$, $\boldsymbol{B}_{p}^{3}$, corresponding respectively to the middle $k_{1}$ and last $k_{2}$ diagonal elements of the original diagonal matrix.

For odd $r$, the upper block matrix $\overline{\boldsymbol{\mathcal{M}}_{1}}$ of the labeling matrix $\boldsymbol{\mathcal{M}}_{1}$ for the graph $S_{k_{1},k_{2}}\diamond \overline{K_{r}}$ is

\begin{equation*}
\overline{\boldsymbol{\mathcal{M}}_{1}}=\left(\begin{array}{cccc:ccccccc}
* & b_{0}^{1} &\boldsymbol{b}_{0}^{2} &\bigstar & b_{1}^{1} &\boldsymbol{b}_{1}^{2} &\bigstar &\cdots\cdots & b_{r}^{1} &\boldsymbol{b}_{r}^{2} &\bigstar\\[5pt]
b_{0}^{1} & * & \bigstar & \boldsymbol{b}_{0}^{3} & B_{r}^{1} &\bigstar &\boldsymbol{b}_{1}^{3} &\cdots\cdots & B_{1}^{1} &\bigstar &\boldsymbol{b}_{r}^{3}\\[5pt]
\boldsymbol{b}_{0}^{2\,\rm{T}} & \bigstar & \bigstar& \bigstar &  \bigstar &\boldsymbol{B}_{r}^{2} &\bigstar &\cdots\cdots & \bigstar &\boldsymbol{B}_{1}^{2} &\bigstar\\[5pt]
\bigstar & \boldsymbol{b}_{0}^{3\,\rm{T}} & \bigstar & \bigstar & \bigstar & \bigstar &\boldsymbol{B}_{r}^{3} &\cdots\cdots &\bigstar& \bigstar &\boldsymbol{B}_{1}^{3}
\end{array}\right).
%=\left(\begin{array}{cc}\boldsymbol{M}_{1}^{1}&\boldsymbol{M}_{2}^{1}\end{array}\right)
\end{equation*}
The corresponding row sums of $\boldsymbol{\mathcal{M}}_{1}$ in this case are calculated as
\begin{equation*}
\begin{array}{l}
(1)\,\text{for $\mathcal{R}_{1}$},\,\,\omega(c_{1})=(k_{1}+1)[r^{2}(k_{1}+k_{2}+1)+k+1+\frac{rk_{1}+rk_{2}+k_{1}-k_{2}}{2}],\\[6pt]
(2)\,\text{for $\mathcal{R}_{2}$},\,\,\omega(c_{2})=\frac{k_{2}[(2r^{2}+r+1)(k_{1}+k_{2}+1)+2k_{1}+3+r]}{2}+\frac{(2r^{2}+3r+1)(k_{1}+k_{2}+1)+r+1}{2},\\[6pt]
(3)\,\text{from $\mathcal{R}_{3}$ to $\mathcal{R}_{k_{1}+k_{2}+2}$},\quad\omega(v_{i})=\frac{(2r^{2}+3r+1)(k_{1}+k_{2}+1)+r+1}{2}\text{ for $1\leq i\leq k_{1}+k_{2}$},\\[6pt]
(4)\,\text{from $\mathcal{R}_{k_{1}+k_{2}+3}$ to $\mathcal{R}_{(k_{1}+k_{2}+1)(r+1)+1}$},\quad\omega(u_{i}^{j})=(2r+2)(k_{1}+k_{2}+1)+1 \,\,\text{for}\\[6pt]
\text{$1\leq i\leq k_{1}+k_{2}+1$ and $1\leq j\leq r$}.
\end{array}
\end{equation*}

For even $r$, the labeling matrix is similarly discussed based on the parity of $\frac{r}{2}$, following the proof of Theorem \ref{th1}. The labeling matrices $\boldsymbol{\mathcal{M}}_{2'}$ and $\boldsymbol{\mathcal{M}}_{2''}$ are obtained by applying the same transformation and partitions as described above. When $r$ is even, the row sums of the labeling matrices $\boldsymbol{\mathcal{M}}_{2'}$ and $\boldsymbol{\mathcal{M}}_{2''}$ are%$\frac{r}{2}$
\begin{equation*}
\begin{array}{l}
(1)\,\text{for $\mathcal{R}_{1}$},\,\,\omega(c_{1})=\frac{(k_{1}+1)[(2r^{2}+r)(k_{1}+k_{2}+1)-2k_{2}+r+2]}{2},\\[6pt]
(2)\,\text{for $\mathcal{R}_{2}$},\,\,\omega(c_{2})=k_{2}(k_{1}+2)+\frac{[k_{2}(2r^{2}+r)+(2r^{2}+3r+2)](k_{1}+k_{2}+1)+(k_{2}+1)r}{2},\\[6pt]
(2)\,\text{from $\mathcal{R}_{3}$ to $\mathcal{R}_{k_{1}+k_{2}+2}$},\,\,\omega(v_{i})=\frac{(2r^{2}+3r+2)(k_{1}+k_{2}+1)+r}{2}\text{ for $1\leq i\leq k_{1}+k_{2}$},\hspace{2em}\\[6pt]
(3)\,\text{from $\mathcal{R}_{k_{1}+k_{2}+3}$ to $\mathcal{R}_{(k_{1}+k_{2}+1)(r+1)+1}$},\,\,\omega(u_{i}^{j})=(2r+2)(k_{1}+k_{2}+1)+1,\\[6pt]
\quad\,\text{ for $1\leq i\leq k_{1}+k_{2}+1$ and $1\leq j\leq r$}.
\end{array}
\end{equation*}

In conclusion, the derived labeling matrices for $S_{k_{1},k_{2}}\diamond \overline{K_{r}}$ facilitate a local antimagic labeling using four distinct colors, which are derived from the sums of the corresponding row entries. This demonstrates that $\chi_{la}(S_{k_{1},k_{2}}\diamond \overline{K_{r}})\leq 4$. Combining this with the established lower bound 3, we affirm that $3\leq \chi_{la}(S_{k_{1},k_{2}}\diamond \overline{K_{r}})\leq 4$. Thus the proof is complete.
\end{proof}

\begin{example}\label{E2}
{For $k_{1}=4,k_{2}=5,r=4$, we have $3\leq\chi_{la}(S_{4,5}\diamond \overline{K_{4}})\leq 4$.}
\end{example}
The labeling matrix $\boldsymbol{\mathcal{M}}_{S_{4,5}\diamond \overline{K_{4}}}$ is given by
\begin{equation*}
\boldsymbol{\mathcal{M}}_{S_{4,5}\diamond \overline{K_{4}}}=
\begin{pmatrix}
	\boldsymbol{M}_{1} & \boldsymbol{M}_{2}\\
	\boldsymbol{M}_{2}^{\rm{T}} &\bigstar
\end{pmatrix},
\end{equation*}
where the details of the matrices $\boldsymbol{M}_{1},\boldsymbol{M}_{2}$ are provided in Appendix \ref{A}. Compute the row sums of $\boldsymbol{\mathcal{M}}_{S_{4,5}\diamond \overline{K_{4}}}$, we get
\begin{equation*}
\begin{array}{l}
\quad\omega(c_{1})=890 \text{ for $\mathcal{R}_{1}$},\quad\omega(c_{2})=1172 \text{ for $\mathcal{R}_{2}$},\\[4pt]
\quad\omega(v_{i})=232 \text{ from $\mathcal{R}_{3}$ to $\mathcal{R}_{11}$ for $1\leq i\leq 9$}, \\[4pt]
\quad\omega(u_{i}^{j})=101 \text{ from $\mathcal{R}_{12}$ to $\mathcal{R}_{51}$ for $1\leq i\leq 10$ and $1\leq j\leq 4$}.\quad\quad\quad\quad\quad\quad\quad\quad\quad\quad\quad\quad\quad\quad\quad
\end{array}
\end{equation*}
Thus, a local antimagic labeling of $S_{4,5}\diamond \overline{K_{4}}$ is obtained using four distinct colors. We confirm that $3\leq\chi_{la}(S_{4,5}\diamond \overline{K_{4}})\leq 4$.

\subsubsection{$\chi_{la}(G\diamond rK_{2})$}%The Local Antimagic chromatic number of graph $G\diamond mK_{2}$

In this subsection, we explore the local antimagic chromatic number of the edge-corona product of a graph $G$ and $rK_{2}$. Let us start by establishing the lower bound for an arbitrary graph $G$.
Given a graph $G$, the proper vertex-coloring of $G\diamond rK_{2}$ uses at least 4 colors. This leads us to the following inequality for the local antimagic chromatic number
$$\chi_{la}(G\diamond rK_{2})\geq \chi(G\diamond rK_{2})\geq4.$$
For the star $S_{k}$, denote the vertex set and edge set of $S_{k}\diamond rK_{2}$ as
\begin{equation*}
  \begin{array}{c}%\label{}
      V(S_{k}\diamond rK_{2})=V(S_{k})\cup \{u_{i}^{j1},u_{i}^{j2}|1\leq i\leq k,1\leq j\leq r\}, \\[4pt]
      E(S_{k}\diamond rK_{2})=E(S_{k})\cup \{cu_{i}^{j1},cu_{i}^{j2},v_{i}u_{i}^{j1},v_{i}u_{i}^{j2},u_{i}^{j1}u_{i}^{j2}|1\leq i\leq k,1\leq j\leq r\}.
  \end{array}
\end{equation*}
Then, determine the value of $\chi_{la}(S_{k}\diamond rK_{2})$.

\begin{theorem}\label{The-31}
For the star $S_{k}$, we have $\chi_{la}(S_{k}\diamond rK_{2})=4$.
\end{theorem}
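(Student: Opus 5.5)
The lower bound comes first. For any $i$ and $j$, the four vertices $c,v_{i},u_{i}^{j1},u_{i}^{j2}$ are pairwise adjacent and so induce a $K_{4}$. Hence $\chi_{la}(S_{k}\diamond rK_{2})\geq\chi(S_{k}\diamond rK_{2})\geq 4$, as already noted before the statement. The whole effort then goes into constructing a local antimagic labeling with exactly four colors.

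The target coloring is $\omega(c)$ as one color, $\omega(v_{i})=\beta$ for all $i$, $\omega(u_{i}^{j1})=\gamma_{1}$ and $\omega(u_{i}^{j2})=\gamma_{2}$ for all $i,j$, with $\beta,\gamma_{1},\gamma_{2}$ pairwise distinct and different from $\omega(c)$. Since $c$ is incident to more than half of the $m=k+5kr$ edges, its weight will automatically exceed every other weight, so that color is no concern.

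For each block $(i,j)$ I would write its five edges as $x=cu_{i}^{j1}$, $y=v_{i}u_{i}^{j1}$, $z=u_{i}^{j1}u_{i}^{j2}$, $x'=cu_{i}^{j2}$ and $y'=v_{i}u_{i}^{j2}$. I would reserve the labels as follows:
\begin{itemize}
\item $[1,k]$ goes to the edges $cv_{i}$, via the vector $\boldsymbol{a}_{0}$ exactly as in $\boldsymbol{\mathcal{M}}_{S_{k}}$;
\item $[k+1,k+kr]$ goes to the edges $z$;
\item the remaining $4kr$ labels form two groups of $2kr$, one for the pairs $(x,y)$ and one for the pairs $(x',y')$.
\end{itemize}
The design goal is $x+y+z=\gamma_{1}$ and $x'+y'+z=\gamma_{2}$ in every block. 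So if $z$ runs increasingly through its interval, the pair sums $x+y$ and $x'+y'$ must run decreasingly by exactly one at each step. Pair sums forming a run of consecutive integers are exactly what the vectors $\boldsymbol{a}_{j}=(2jk-k)\boldsymbol{e}+\boldsymbol{a}_{0}$ (odd $j$) and $(2jk+1)\boldsymbol{e}-\boldsymbol{a}_{0}$ (even $j$), together with the diagonal blocks $\boldsymbol{A}_{j}$, produce in the proof of Theorem~\ref{th1}. There, each pair (entry of $\boldsymbol{a}_{j}$, matching diagonal entry of some $\boldsymbol{A}_{j'}$) had constant sum. I would shift that scheme so the pair sums form an arithmetic progression with difference $-1$, interleave it with the $z$-labels, and then shift once more by $2kr$ for the primed pairs. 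That gives $\gamma_{2}=\gamma_{1}+2kr\neq\gamma_{1}$.

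I would organize the labeling matrix with vertex order $c,v_{1},\dots,v_{k}$ followed by the $u$'s. The upper block $\overline{\boldsymbol{\mathcal{M}}}$ records the $c$-row and the $v_{i}$-rows. The lower block additionally contains the single entry $z$ linking $u_{i}^{j1}$ to $u_{i}^{j2}$, so it is no longer purely $\bigstar$.

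The main obstacle is making $\omega(v_{i})=i+\sum_{j}(y+y')$ independent of $i$ while keeping the block sums constant. Row $v_{i}$ collects one $y$ and one $y'$ from each of its $r$ blocks, plus its own label $i$ from $\boldsymbol{a}_{0}$. I would handle this as in Theorem~\ref{th1}:
\begin{itemize}
\item alternate increasing and decreasing diagonal blocks ($\boldsymbol{\Omega}$ against $-\boldsymbol{\Omega}$) across consecutive $j$, so that the $i$-dependence telescopes and cancels the leading $i$;
\item when $r$ is odd, compensate the leftover $i$-dependence with one unpaired reversed diagonal block;
\item when $r$ is even, repeat the case split on the parity of $r/2$ and the reversal/stride-$2$ adjustments of the vectors $\boldsymbol{b}_{j}$ used there.
\end{itemize}
Because the $v_{i}$-rows carry both $y$ and $y'$, the two families give extra freedom: I would make the $y$-family increasing in $i$ and the $y'$-family decreasing, with total $i$-dependence $-i$.

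Finally I would check that $\beta\notin\{\gamma_{1},\gamma_{2}\}$. This holds since $v_{i}$ has degree $2r+1\geq 3$ and collects large labels. The small cases $r=1$ and $k=1$ I would verify by explicit labelings, or adjust by swapping two labels if a coincidence occurs.
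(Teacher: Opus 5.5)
Your lower bound is fine, but the upper bound is where all the content lies, and the labeling scheme you commit to cannot be carried out in general.

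\textbf{The scheme fails when $kr$ is even.} Suppose the primed pairs are the unprimed ones with every label raised by $2kr$. (This raises each pair sum by $4kr$, so in fact $\gamma_2=\gamma_1+4kr$, not $\gamma_1+2kr$.) A shifted image is at least $k+3kr+1$, so no label of $[k+kr+1,k+3kr]$ can lie in the primed group. Hence the $(x,y)$-group is forced to be exactly $[k+kr+1,k+3kr]$, whose sum is $kr(2k+4kr+1)$. Its $kr$ pair sums must be $\gamma_1-z$ for $z\in[k+1,k+kr]$, which total $kr\gamma_1-\frac{kr(2k+kr+1)}{2}$. Equating the two gives $\gamma_1=2k+4kr+1+\frac{2k+kr+1}{2}$, an integer only when $kr$ is odd. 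So the design is impossible whenever $k$ or $r$ is even. For example, with $k=2$, $r=1$ the group $[5,8]$ pairs only to sums $\{11,15\}$, $\{12,14\}$ or $\{13,13\}$, never two consecutive integers.

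\textbf{The distinctness arguments are unsound.} You have $\deg c=k+2kr\le (k+5kr)/2$, with equality only at $r=1$. So $c$ is never incident to more than half the edges, and for $k=1$ the vertices $c$ and $v_1$ are even symmetric. Likewise $\deg v_i=2r+1$ equals $3$ when $r=1$, so nothing forces $\beta\notin\{\gamma_1,\gamma_2\}$. The collisions are real. For $k=r=1$ your scheme forces $cv_1=1$, $z=2$, $\{x,y\}=\{3,4\}$, $\{x',y'\}=\{5,6\}$ and $\gamma_1=9$. The four orientations give $\omega(v_1)=9=\gamma_1$, then $\omega(c)=\omega(v_1)=10$ twice, then $\omega(c)=9=\gamma_1$; each is a conflict. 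Swapping two labels would destroy the constant block sums, so it is not a harmless fix. The $v_i$-row balancing is also only sketched, so no labeling is actually exhibited.

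\textbf{The missing idea: each pair should mix labels from different ranges.} This removes the parity obstruction. The paper allocates labels as follows:
\begin{itemize}
\item $u_i^{j1}u_i^{j2}$ gets the \emph{top} labels $[4rk+k+1,5rk+k]$ (the blocks $\boldsymbol{B}_{r+j}$);
\item $cu_i^{j1}$ gets $[k+1,k+rk]$ (the $\boldsymbol{a}_j$);
\item $[rk+k+1,3rk+k]$ is split by odd and even offset between $v_iu_i^{j1}$ (the $\boldsymbol{A}_j$) and $cu_i^{j2}$ (the $\boldsymbol{b}_j$);
\item $v_iu_i^{j2}$ gets $[3rk+k+1,4rk+k]$ (the $\boldsymbol{B}_j$).
\end{itemize}
This yields $\omega(u_i^{j1})=7rk+3k+1$ and $\omega(u_i^{j2})=10rk+3k+2$ for all $k$ and $r$. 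In row $v_i$ with $r$ odd, the $i$-slopes are $+1$ from $\boldsymbol{a}_0$, net $-2$ from the alternating $\boldsymbol{A}_j$, and net $+1$ from the alternating $\boldsymbol{B}_j$, so they cancel. For even $r$ the alternations cancel completely, and the paper reverses some blocks (its Steps 2--3) to absorb the leftover $+i$.

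To repair your proposal you need an explicit allocation of this kind. You must also check directly that $\omega(c)$ and $\beta$ differ from each other and from $\gamma_1$ and $\gamma_2$.
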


\begin{proof}
The lower bound of $\chi_{la}(S_{k}\diamond rK_{2})$ is 4, as the induced subgraph of $S_{k}\diamond rK_{2}$ contains $K_4$, implying $\chi_{la}(S_{k}\diamond rK_{2})\geq 4$. Then the upper bound of $\chi_{la}(S_{k}\diamond rK_{2})$ is obtained by the local antimagic labelings of $S_{k}\diamond rK_{2}$ constructed with the labeling matrices.

Let us start with the following calculations:
\begin{equation}\label{sk2-1}
\begin{array}{ll}
\boldsymbol{a}_{0}=(1,2,\cdots,k),\\
\boldsymbol{a}_{j}=jk\boldsymbol{e}+\boldsymbol{a}_{0}, & \text{for $1\leq j\leq r$ and $j$ odd},\\[4pt]
\boldsymbol{a}_{j}=(jk+k+1)\boldsymbol{e}-\boldsymbol{a}_{0}, & \text{for $1\leq j\leq r$ and $j$ even},\\[6pt]
\boldsymbol{A}_{0}=\mathrm{\textbf{diag}}(2k-1,2k-3,\cdots,1)\in \mathbb{R}^{k\times k},\\[4pt]
\boldsymbol{A}_{j}=(r+2j-1)k\boldsymbol{I}+\boldsymbol{A}_{0}, &\text{for $1\leq j\leq r$ and $j$ odd},\\[4pt]
\boldsymbol{A}_{j}=(r+2j+1)k\boldsymbol{I}-\boldsymbol{A}_{0}, & \text{for $1\leq j\leq r$ and $j$ even}.
\end{array}
\end{equation}

Vectorizing $\boldsymbol{A}_{0}$ into $\boldsymbol{b}_{0}$ involves stretching the diagonal elements of $\boldsymbol{A}_{0}$ to form the vector $\boldsymbol{b}_{0}$. It follows that
\begin{equation}\label{sk2-2}
\begin{array}{ll}
\boldsymbol{b}_{0}=(2k-1,2k-3,\cdots,1),\\[2pt]
\boldsymbol{b}_{j}=(rk+2jk-k+1)\boldsymbol{e}+\boldsymbol{b}_{0}, &\text{for $1\leq j\leq r$ and $j$ odd},\\[4pt]
\boldsymbol{b}_{j}=(rk+2jk+k+1)\boldsymbol{e}-\boldsymbol{b}_{0}, & \text{for $1\leq j\leq r$ and $j$ even}.\\[4pt]
\boldsymbol{\Omega}_{0}=\mathrm{\textbf{diag}}(1,2,\cdots,k)\in \mathbb{R}^{k\times k},\\[2pt]
\boldsymbol{B}_{j}=(3rk+jk)\boldsymbol{I}+\boldsymbol{\Omega}_{0}, & \text{for $1\leq j\leq r$ and $j$ odd},\\[4pt]
\boldsymbol{B}_{j}=(3rk+jk+k+1)\boldsymbol{I}-\boldsymbol{\Omega}_{0}, & \text{for $1\leq j\leq r$ and $j$ even}.\\[4pt]
\boldsymbol{B}_{r+j}=rk\boldsymbol{I}+\boldsymbol{B}_{j}, &\text{for $1\leq j\leq r$}.
\end{array}
\end{equation}

Denote the labeling matrix of $S_{k}\diamond rK_{2}$ as $\boldsymbol{\mathcal{M}}^{srk}$, whose entries are ordered as $c,v_{1}$, $\cdots,v_{k},\,u_{1}^{11},\cdots,u_{k}^{11},\cdots,u_{1}^{r1},\cdots,u_{k}^{r1},u_{1}^{12},\cdots,u_{k}^{12},\cdots,$ $u_{1}^{r2},\cdots,u_{k}^{r2}$. First, we give the labeling matrix for $S_{k}\diamond rK_{2}$ when $r$ is odd as follows:
\begin{equation*}
\boldsymbol{\mathcal{M}}_{1}^{srk}={\small\left(\begin{array}{cc:cccc:cccc}
* & \boldsymbol{a}_{0} & \boldsymbol{a}_{1}& \boldsymbol{a}_{2} & \cdots & \boldsymbol{a}_{r}& \boldsymbol{b}_{r}& \boldsymbol{b}_{r-1} & \cdots & \boldsymbol{b}_{1}\\[4pt]
\boldsymbol{a}_{0}^{\rm{T}}& \bigstar & \boldsymbol{A}_{r}& \boldsymbol{A}_{r-1}&\cdots &\boldsymbol{A}_{1} & \boldsymbol{B}_{1}& \boldsymbol{B}_{2}&\cdots &\boldsymbol{B}_{r}\\[4pt]
\hdashline
&&&&&&\\[-12pt]
\boldsymbol{a}_{1}^{\rm{T}}& \boldsymbol{A}_{r} &\bigstar & & & & \boldsymbol{B}_{r+1}& \bigstar & \cdots & \bigstar\\[4pt]
\boldsymbol{a}_{2}^{\rm{T}} &\boldsymbol{A}_{r-1} & &\bigstar & &  & \bigstar & \boldsymbol{B}_{r+2}& \cdots & \bigstar\\[4pt]
\vdots & \vdots & & & \ddots& & \vdots & \vdots &\ddots & \vdots \\
\boldsymbol{a}_{r}^{\rm{T}}&\boldsymbol{A}_{1} & & & & \bigstar& \bigstar & \bigstar& \cdots& \boldsymbol{B}_{2r}\\[2pt]\hdashline
&&&&&&\\[-12pt]
\boldsymbol{b}_{r}^{\rm{T}}& \boldsymbol{B}_{1} & \boldsymbol{B}_{r+1}& \bigstar & \cdots & \bigstar &\bigstar\\[4pt]
\boldsymbol{b}_{r-1}^{\rm{T}} &\boldsymbol{B}_{2}&\bigstar  & \boldsymbol{B}_{r+2}& \cdots & \bigstar & &\bigstar\\[4pt]
\vdots & \vdots & \vdots & \vdots &\ddots& \vdots & & &\ddots\\
\boldsymbol{b}_{1}^{\rm{T}} &\boldsymbol{B}_{r} & \bigstar & \bigstar& \cdots& \boldsymbol{B}_{2r} & & & &\bigstar
\end{array}\right)},
\end{equation*}

The colors are induced from $\boldsymbol{\mathcal{M}}_{1}^{srk}$ by calculating the row sums as follows:
\begin{equation*}
\begin{array}{l}
(1)\,\text{for $\mathcal{R}_{1}$},\,\,\omega(c_{1})=rk^{2}+4k^{2}+2k+\frac{3k^{2}(r^{2}+2r-3)+(kr-k)(2rk-2k+3)}{2},\\[6pt]
(2)\,\text{from $\mathcal{R}_{2}$ to $\mathcal{R}_{k+1}$},\,\,\omega(v_{i})=2rk+2k+\frac{kr(5rk+3)}{2}\text{ for $1\leq i\leq k$},\\[6pt]
(3)\,\text{from $\mathcal{R}_{k+2}$ to $\mathcal{R}_{rk+k+1}$},\,\,\omega(u_{i}^{j1})=7rk+3k+1,\,\text{ for $1\leq i\leq k$ and $1\leq j\leq r$},\\[6pt]
(4)\,\text{from $\mathcal{R}_{rk+k+2}$ to $\mathcal{R}_{2rk+k+1}$},\,\,\omega(u_{i}^{j2})=10rk+3k+2\text{ for $1\leq i\leq k$, $1\leq j\leq r$}.
\end{array}
\end{equation*}

Obviously, these are four different colors derived from the labeling matrix $\boldsymbol{\mathcal{M}}_{1}^{srk}$. This means that $\boldsymbol{\mathcal{M}}_{1}^{srk}$ corresponds to a local antimagic labeling of $S_{k}\diamond rK_{2}$ for odd $r$, yielding that $\chi_{la}(S_{k}\diamond rK_{2})\leq 4$.

Next, when $r$ is even, its corresponding labeling matrix $\boldsymbol{\mathcal{M}}_{2}^{srk}$ is obtained through the following steps.

{\it Step 1}. Utilize the equalities \eqref{sk2-1} and \eqref{sk2-2} to get $\boldsymbol{a}_{j},\boldsymbol{A}_{j},\boldsymbol{b}_{j},\boldsymbol{B}_{j},\boldsymbol{B}_{r+j}$ for $1\leq j\leq r$.

{\it Step 2}. To obtain the new matrix $\boldsymbol{A}_{i}'$ and vector $\boldsymbol{b}_{i}'$ for $1\leq j< r$, transform the diagonal entries of  $\boldsymbol{A}_{i}$ and the entries of $\boldsymbol{b}_{i}$ by reversing their order. Meanwhile, invert the order of $\boldsymbol{a}_{1}$ and the diagonal entries of $\boldsymbol{B}_{1}$ and $\boldsymbol{B}_{1}'$ to obtain the new vector $\boldsymbol{a}_{1}'$ and matrices $\boldsymbol{B}_{1}'$, $\boldsymbol{B}_{r+1}'$, respectively.

{\it Step 3}. Convert $\boldsymbol{a}_{r}$ into a matrix representation $\boldsymbol{A}_{1}'$ by arranging its entries in reverse order along the diagonal. Conversely, vectorize $\boldsymbol{A}_{1}'$ into $\boldsymbol{a}_{r}'$ by reversing the diagonal elements to form $\boldsymbol{a}_{r}'$.

The resulting labeling matrix $\boldsymbol{\mathcal{M}}_{2}^{srk}$ is structured as
\begin{equation*}
 \boldsymbol{\mathcal{M}}_{2}^{srk}=\begin{pmatrix}
    \boldsymbol{M}_{1} &  \boldsymbol{M}_{2}' &  \boldsymbol{M}_{3}'\\[3pt]
    \boldsymbol{M}_{2}'^{\rm{\,T}} & \bigstar & \boldsymbol{M}_{4}'\\[3pt]
   \boldsymbol{M}_{3}'^{\rm{\,T}}&  \boldsymbol{M}_{4}' & \bigstar
  \end{pmatrix},
\end{equation*}
where $\boldsymbol{M}_{1}\in \mathbb{R}^{(k+1)\times (k+1)}$, $\boldsymbol{M}_{2}',\boldsymbol{M}_{3}'\in\mathbb{R}^{(k+1)\times kr}$, and $\boldsymbol{M}_{4}'\in\mathbb{R}^{kr\times kr}$ is a diagonal matrix. Matrices $\boldsymbol{M}_{2}'^{\rm{\,T}},\boldsymbol{M}_{3}'^{\rm{\,T}}$ are the transpose of $\boldsymbol{M}_{2}',\boldsymbol{M}_{3}'$, respectively. The entries in $\boldsymbol{M}_{1}$ remain unchanged, and
\begin{align*}
&{\small \boldsymbol{M}_{2}' = \begin{pmatrix}
 \boldsymbol{a}_{1}'& \boldsymbol{a}_{2} & \cdots & \boldsymbol{a}_{r-1}& \boldsymbol{a}_{r}'\\
    %a1=(n+1,...,2n) to a_{1}'=(2n,...,n+1)
 \boldsymbol{A}_{r}& \boldsymbol{A}'_{r-1}&\cdots & \boldsymbol{A}'_{2}&\boldsymbol{A}_{1}'
 \end{pmatrix}},
\quad {\small  \boldsymbol{M}_{3}'  = \begin{pmatrix}
 \boldsymbol{b}_{r}& \boldsymbol{b}'_{r-1} & \cdots & \boldsymbol{b}'_{2}& \boldsymbol{b}'_{1}\\
 \boldsymbol{B}_{1}'& \boldsymbol{B}_{2}&\cdots &\boldsymbol{B}_{r-1}&\boldsymbol{B}_{r}
 \end{pmatrix} }, \\[4pt]
 &{\small \boldsymbol{M}_{4}'   = \begin{pmatrix}
\boldsymbol{B}_{r+1}'& \bigstar & \cdots & \bigstar& \bigstar\\
\bigstar& \boldsymbol{B}_{r+2}&\cdots & \bigstar&\bigstar\\
\vdots &\vdots &\ddots &\vdots&\vdots \\
 \bigstar & \bigstar &\cdots&\boldsymbol{B}_{2r-1} & \bigstar\\
 \bigstar& \bigstar& \cdots & \bigstar& \boldsymbol{B}_{2r}
    \end{pmatrix}}.
\end{align*}

It is observed that $\boldsymbol{M}_{2}'$, $\boldsymbol{M}_{3}'$, and $\boldsymbol{M}_{4}'$ are permutations of the entries of $\boldsymbol{M}_{2}$, $\boldsymbol{M}_{3}$ and $\boldsymbol{M}_{4}$, respectively, with modifications as detailed in {\it Step 2} and {\it Step 3}.

The distinct colors are then induced from the labeling matrix $\boldsymbol{\mathcal{M}}_{2}^{srk}$ by calculating the row sums as follows:
\begin{equation*}
\begin{array}{l}
(1)\,\text{for $\mathcal{R}_{1}$},\,\,\omega(c)=2k^{2}+\frac{kr(5kr+4k+3)}{2},\\[6pt]
(2)\,\text{from $\mathcal{R}_{2}$ to $\mathcal{R}_{k+1}$},\,\,\omega(v_{i})=2kr-k+1+\frac{11kr^{2}+r}{2}\text{ for $1\leq i\leq k$},\\[6pt]
(3)\,\text{from $\mathcal{R}_{k+2}$ to $\mathcal{R}_{rk+k+1}$},\,\,\omega(u_{i}^{j1})=7rk+3k+1,\,\text{ for $1\leq i\leq k$ and $1\leq j\leq r$},\\[6pt]
(4)\,\text{from $\mathcal{R}_{rk+k+2}$ to $\mathcal{R}_{2rk+k+1}$},\,\,\omega(u_{i}^{j2})=10rk+3k+2\text{ for $1\leq i\leq k$, $1\leq j\leq r$}.
\end{array}
\end{equation*}
These row sums confirm that $\boldsymbol{\mathcal{M}}_{2}^{srk}$ corresponds to a local antimagic labeling of the graph satisfying $\chi_{la}(S_{k}\diamond rK_{2}) \leq 4$ for even $r$.

In summary, we have demonstrated that $\chi_{la}(S_{k}\diamond rK_{2}) = 4$, thereby concluding the proof.
\end{proof}

\begin{example}\label{E3}
 For $k=7,r=5$, the local antimagic chromatic number is $\chi_{la}(S_{7}\diamond 5K_{2})=4$.
\end{example}	
The local antimagic labeling matrix for the graph $S_{7} \diamond 5K_{2}$ is defined as follows:
\begin{equation*}
\boldsymbol{\mathcal{M}}_{S_{7}\diamond 5K_{2}}=\left(\begin{array}{ccc}
\boldsymbol{M}_{1} & \boldsymbol{M}_{2} & \boldsymbol{M}_{3}\\[3pt]
\boldsymbol{M}_{2}^{\rm{T}} &\bigstar & \boldsymbol{M}_{4}\\[3pt]
\boldsymbol{M}_{3}^{\rm{T}} & \boldsymbol{M}_{4} &\bigstar
\end{array}\right).
\end{equation*}
Details of the blocked matrices are provided in Appendix \ref{B}.

By analyzing the matrix $\boldsymbol{\mathcal{M}}_{S_{7}\diamond 5K_{2}}$, the row sums are calculated as follows:
\vspace{-1em}
\begin{align*}
&(1)\,\text{for $\mathcal{R}_{1}$,}\quad \omega(c)=3633,\\[-4pt]
&(2)\,\text{from $\mathcal{R}_{2}$ to $\mathcal{R}_{8}$,} \quad\omega(v_{i})=1039,\quad 1\leq i\leq 7,\quad\\[-4pt]
&(3)\,\text{from $\mathcal{R}_{9}$ to $\mathcal{R}_{43}$,}\quad\omega(u_{i}^{j1})=267,\quad1\leq i\leq 7,1\leq j\leq 5,\quad\\[-4pt]
&(4)\,\text{from $\mathcal{R}_{44}$ to $\mathcal{R}_{78}$,} \quad\omega(u_{i}^{j2})=373,\quad 1\leq i\leq 7,1\leq j\leq 5.\hspace{8em}
\end{align*}\\[-35pt]
Thus we conclude that $\chi_{la}(S_{7}\diamond 5K_{2})=4$.

For the double star $S_{k_{1},k_{2}}$, the edge-corona product graph $S_{k_{1},k_{2}}\diamond rK_{2}$ is defined with its vertex and edge sets expanded as
 $V(S_{k_{1},k_{2}}\diamond rK_{2})=V(S_{k_{1},k_{2}})\cup V_{1}$ and $E(S_{k_{1},k_{2}}\diamond rK_{2})=E(S_{k_{1},k_{2}})\cup E_{1}\cup E_{2} \cup E_{3}$, respectively,
where $V_{1}=\{u_{i}^{j1},u_{i}^{j2}|1\leq i\leq k_{1}+k_{2}+1,1\leq j\leq r\}$, $E_{1}=\{c_{1}u_{1}^{js},c_{2}u_{1}^{js},u_{1}^{j1}u_{1}^{j2}|1\leq j\leq r,s=1,2\}$, $E_{2}=\{c_{1}u_{i}^{js},v_{i-1}u_{i}^{js},u_{i}^{j1}u_{i}^{j2}|2\leq i\leq k_{1}+1,s=1,2\}$ and $E_{3}=\{c_{2}u_{i}^{js},v_{i-1}u_{i}^{js},u_{i}^{j1}u_{i}^{j2}|k_{1}+2\leq i\leq k_{1}+k_{2}+1,s=1,2\}$.
Clearly, the lower bound of $\chi_{la}(S_{k_{1},k_{2}}\diamond rK_{2})$ is also 4. To find the upper bound, we construct a local antimagic labeling for this graph, leading to following theorem.
\begin{theorem}
For the double star $S_{k_{1},k_{2}}$, we have $4\leq \chi_{la}(S_{k_{1},k_{2}}\diamond rK_{2})\leq 5$.
\end{theorem}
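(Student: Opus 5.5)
The lower bound is immediate from the remark before the statement. For any edge $uv$ of $G$ and any copy of $K_2$ attached to it, with vertices $u_i^{j1},u_i^{j2}$, the four vertices $u,v,u_i^{j1},u_i^{j2}$ induce a $K_4$ in $S_{k_1,k_2}\diamond rK_2$. Hence $\chi_{la}\ge\chi\ge 4$, and the whole work lies in the upper bound. For that I will build a labeling matrix that induces exactly five colors. The vertex order is $c_1,c_2,v_1,\dots,v_{k_1+k_2}$, followed by all $u_i^{j1}$ and then all $u_i^{j2}$. The construction mimics the one for $S_k\diamond rK_2$ in Theorem \ref{The-31}, with the double star treated as a star on $K:=k_1+k_2+1$ edges, in the same way the proof for $S_{k_1,k_2}\diamond\overline{K_r}$ imitated Theorem \ref{th1}.

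I index the edges of $S_{k_1,k_2}$ by $i=1,\dots,K$: edge $1$ is $c_1c_2$, edges $2,\dots,k_1+1$ are $c_1v_{i-1}$, and the remaining edges are $c_2v_{i-1}$. The edge labels are taken from $\boldsymbol a_0$, now of length $K$. The vectors $\boldsymbol a_j,\boldsymbol b_j$ and diagonal matrices $\boldsymbol A_j,\boldsymbol B_j,\boldsymbol B_{r+j}$ are defined by \eqref{sk2-1} and \eqref{sk2-2} with $k$ replaced by $K$. Each of them is split into three parts, as in the $\overline{K_r}$ case: a first component for edge $c_1c_2$, a middle $k_1$ block, and a last $k_2$ block. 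Each object then has to be assigned to one endpoint of the corresponding star edge.

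\begin{itemize}
\item The blocks $\boldsymbol a_j,\boldsymbol b_j$ record edges from the center to $u_i^{js}$. For $i=1$ and for the middle block they go into row $c_1$; for the last block they go into row $c_2$.
\item The diagonal blocks $\boldsymbol A_j,\boldsymbol B_j$ record edges from the other endpoint to $u_i^{js}$. For $i=1$ this endpoint is $c_2$; otherwise it is $v_{i-1}$.
\item The blocks $\boldsymbol B_{r+j}$ carry the edges $u_i^{j1}u_i^{j2}$.
\end{itemize}

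For odd $r$ I use the analogue of $\boldsymbol{\mathcal M}_1^{srk}$ directly. For even $r$ I use the analogue of $\boldsymbol{\mathcal M}_2^{srk}$, obtained by Steps 1–3 of Theorem \ref{The-31}.

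Under this assignment, every row sum in the lower part of the matrix is unchanged from the star case. Each vertex $u_i^{j1}$ still receives one entry from an $\boldsymbol a$-column, one from an $\boldsymbol A$-column and one from $\boldsymbol B_{r+j}$. So $\omega(u_i^{j1})=7rK+3K+1$ and $\omega(u_i^{j2})=10rK+3K+2$, constant over all $i,j$, exactly as computed in Theorem \ref{The-31}. Likewise each leaf $v_i$ lies only on its star edge and its $r$ copies, so $\omega(v_i)$ equals the common value of the rows $\mathcal R_2,\dots,\mathcal R_{k+1}$ in Theorem \ref{The-31}, again constant. The star center has been split into $c_1$ and $c_2$. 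Row $c_1$ collects the $\boldsymbol a,\boldsymbol b$ entries of indices $1,\dots,k_1+1$. Row $c_2$ collects the $\boldsymbol a,\boldsymbol b$ entries of the last $k_2$ indices, plus the $\boldsymbol A,\boldsymbol B$ entries of index $1$. Together with the leaf value and the two $u$-values, $\omega(c_1)$ and $\omega(c_2)$ give at most five colors.

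The main obstacle is verifying that these five values form a proper coloring. Since all vertices of a given type share one color, it suffices to check that the colors of adjacent types differ. The adjacent pairs are $c_1c_2$, $c_sv_i$, $c_su^{jt}$, $v_iu^{jt}$ and $u^{j1}u^{j2}$, where $u^{jt}$ stands for the vertices $u_i^{jt}$. The three non-center values are distinct and pairwise separated, as in the star case. The label sizes also make the leaf value far below a center sum whenever $k_1\ge 2$ or $r\ge 2$, but the small cases need care. So I would compute $\omega(c_1)$ and $\omega(c_2)$ explicitly as polynomials in $k_1,k_2,r$ and then do two checks.

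\begin{itemize}
\item I would check $\omega(c_1)\ne\omega(c_2)$, which is the delicate case when $k_1=k_2$. Here I expect to use that $c_2$ additionally carries the large $\boldsymbol A,\boldsymbol B$ entries of index $1$. If equality can occur, I would repair it by reversing the order of the middle and last blocks of $\boldsymbol a_0$, which shifts one center sum by a nonzero amount.
\item I would check that both center sums exceed $\omega(v_i)$ and the two $u$-colors, treating small parameters such as $k_1=1$, $r=1$ separately and exhibiting explicit labelings there if necessary.
\end{itemize}

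Once these inequalities hold, the matrix gives a local antimagic labeling with five colors. Therefore $\chi_{la}(S_{k_1,k_2}\diamond rK_2)\le 5$, which together with the lower bound $4$ proves the statement.
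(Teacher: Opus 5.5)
Your construction is the paper's own. You substitute $K=k_1+k_2+1$ into \eqref{sk2-1}--\eqref{sk2-2} and split every object into its first entry, middle $k_1$ entries and last $k_2$ entries. Then $c_1$ gets index $1$ and the middle block of the $\boldsymbol{a},\boldsymbol{b}$ columns, and $c_2$ gets the index-$1$ diagonal entries plus the last block. Your handling of the $u$-rows and leaf rows is correct. The gap is in the only nontrivial part of the upper bound, properness at $c_1$ and $c_2$: you leave it as ``I would compute and check'', and the expectations you state there are wrong.

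Let $Q(i)$ be the contribution of star index $i$ to the center row of the $S_K\diamond rK_2$ matrix. Then $\omega(c_1)=\sum_{i\le k_1+1}Q(i)$ and $\omega(c_2)=\omega(v)+\sum_{i\ge k_1+2}Q(i)$. A direct computation gives $Q(i)=\frac{5r^2K+4rK+3r+K+1}{2}$ for odd $r$, and $Q(i)=\frac{(5r^2+4r-2)K+3r-6}{2}+6i$ for even $r$. Since $Q$ is nondecreasing, $k_2\ge k_1$ and $\omega(v)>Q(1)$, the inequality $\omega(c_2)>\omega(c_1)$ is automatic. So the pair you call delicate is harmless. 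The dangerous edge is $c_1v_1$ when $k_1=1$: then $c_1$ collects only two columns of small labels, while every leaf carries the large $\boldsymbol{A},\boldsymbol{B}$ labels. Your assertion that the leaf value lies far below the center sums whenever $r\ge 2$ is false. For $k_1=1$,
\[2\bigl(\omega(v)-\omega(c_1)\bigr)=K(r^2-4r-1)-5r-1\ (r\text{ odd}),\qquad 2\bigl(\omega(v)-\omega(c_1)\bigr)=(r^2-4r+2)K-5r-22\ (r\text{ even}).\]
For example, $k_1=k_2=1$, $r=7$ gives $\omega(c_1)=844<856=\omega(v)$. So the planned check that both center sums exceed $\omega(v_i)$ cannot succeed.

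Worse, the even-$r$ expression vanishes at $r=4$, $K=21$. For $S_{1,19}\diamond 4K_2$ one gets $\omega(c_1)=94K+24=1998=95K+3=\omega(v_1)$. Since $c_1v_1$ is an edge, the labeling is not local antimagic. The paper's proof has the same defect: its displayed even-case formulas for $\omega(c_1)$ and $\omega(v_i)$ agree at these parameters, and it only asserts that the five row sums are distinct without comparing adjacent classes.

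Your proposed repair does not fix this. Permuting the entries of $\boldsymbol{a}_0$ alone destroys the constancy of the leaf sums, because the $i$-dependence of $a_0(i)=i$ is exactly what cancels that of the $\boldsymbol{A},\boldsymbol{B}$ entries in each leaf row. What does work is to reassign whole star indices (entire columns) to $c_1$. This leaves all leaf rows and $u$-rows unchanged and moves only $\omega(c_1)$ and $\omega(c_2)$. In the bad case, giving $c_1$ the indices $\{1,21\}$ yields $\omega(c_1)=2112$ and $\omega(c_2)=22062$. These are distinct from $\omega(v)=1998$, $\omega(u^{j1})=652$ and $\omega(u^{j2})=905$.

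To complete the proof you must actually carry out the comparisons $\omega(c_1)\neq\omega(v)$ and $\omega(c_s)\neq\omega(u^{j1}),\omega(u^{j2})$ from these explicit formulas, for all $k_1\le k_2$ and $r$. The exceptional case above must be treated separately.
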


\begin{proof}
According to the labeling matrix of the graph $S_{k}\diamond rK_{2}$ and the parity discussion of $r$, the labeling matrix of $S_{n_{1},n_{2}}\diamond rK_{2}$ is obtained by the following steps.

{\it Step 1}. Substitute $k=k_{1}+k_{2}+1$ into the equations \eqref{sk2-1} and \eqref{sk2-2} above to compute the corresponding vectors
in the equations \eqref{sk2-1} and \eqref{sk2-2} above, let $k=k_{1}+k_{2}+1$ and compute the corresponding vectors $\boldsymbol{a}_{j},\boldsymbol{b}_{j}$, and diagonal matrices $\boldsymbol{A}_{j},\boldsymbol{B}_{j},\boldsymbol{B}_{r+j}$ for $1\leq j\leq r$. If $r$ is even, apply the modifications as detailed in the proof of  Theorem \ref{The-31}.

{\it Step 2}. For each $0\leq j\leq r$, partite $\boldsymbol{a}_{j}$ into a single value $a_{j}^{1}$ and two sub-vectors $\boldsymbol{a}_{j}^{2}$, $\boldsymbol{a}_{j}^{3}$. Specifically, separate the first entry into $a_{j}^{1}$, and the next $k_{1}$ entries into vector $\boldsymbol{a}_{j}^{2}$, and the last $k_{2}$ entries into the other vector $\boldsymbol{a}_{j}^{3}$.   Apply the same partitioning method to vector $\boldsymbol{b}_{j}$ and block matrices $\boldsymbol{A}_{j}$ into a single value $A_{j}^{1}$ and two diagonal sub-matrices $\boldsymbol{A}_{j}^{2},\boldsymbol{A}_{j}^{3}$, whose entries are the middle $k_{1}$ and last $k_{2}$ diagonal entries of matrix $\boldsymbol{A}_{j}$. As well as matrix $\boldsymbol{B}_{j}$ is also partitioned accordingly.

By following these steps, the labeling matrix $\boldsymbol{\mathcal{M}}^{Dsrk}$ for $S_{k_{1},k_{2}}\diamond rK_{2}$ is constructed based on the order of vertices $c_{1},c_{2},v_{1},\cdots,v_{k_{1}},v_{k_{1}+1},\cdots,v_{k_{1}+k_{2}},\,u_{1}^{11},u_{2}^{11},\cdots,u_{k_{1}+k_{2}+1}^{11}$, $\cdots,u_{1}^{r1},u_{2}^{r1},\cdots,u_{k_{1}+k_{2}+1}^{r1},u_{1}^{12},u_{2}^{12},\cdots,u_{k_{1}+k_{2}+1}^{12},\,\cdots,u_{1}^{r2},u_{2}^{r2},\cdots,u_{k_{1}+k_{2}+1}^{r2}$. This labeling matrix is derived from the expansion of $\boldsymbol{\mathcal{M}}_{S_{k}\diamond rK_{2}}$. The specific details are as follows.

When $r$ is odd,
\begin{equation*}
\boldsymbol{\mathcal{M}}_{1}^{Dsrk}=\begin{pmatrix}
	\boldsymbol{M}_{1}^{1}  &  \boldsymbol{M}_{2}^{1} &  \boldsymbol{M}_{3}^{1} \\[3pt]
	\boldsymbol{M}_{2}^{1\,\rm{T}} & \bigstar & \boldsymbol{M}_{4}^{1} \\[3pt]
	\boldsymbol{M}_{3}^{1\,\rm{T}}&  \boldsymbol{M}_{4}^{1}  & \bigstar
\end{pmatrix},
\end{equation*}
where the blocked matrices are obtained analogously in the proof of Theorem \ref{The-31}. In detail, $\boldsymbol{M}_{1}^{1}\in \mathbb{R}^{(k_{1}+k_{2}+2)\times (k_{1}+k_{2}+2)}$, $\boldsymbol{M}_{4}^{1}\in \mathbb{R}^{r(k_{1}+k_{2}+1)\times r(k_{1}+k_{2}+1)}$, and
\begin{equation*}
	\boldsymbol{M}_{1}^{1} =\begin{pmatrix}
*& a_{0}^{1} &\boldsymbol{a}_{0}^{2\,}& \bigstar\,\\[2pt]
a_{0}^{1} &* &\bigstar&\boldsymbol{a}_{0}^{3}\\[3pt]
\boldsymbol{a}_{0}^{2\,{\rm T}}& \bigstar & \bigstar & \bigstar\\[3pt]
\bigstar&\boldsymbol{a}_{0}^{3\,{\rm T}}  & \bigstar & \bigstar
\end{pmatrix},\quad
\boldsymbol{M}_{4}^{1} =\begin{pmatrix}
\boldsymbol{B}_{r+1}& \bigstar & \cdots & \bigstar &\bigstar\\
\bigstar & \boldsymbol{B}_{r+2}& \cdots & \bigstar &\bigstar\\
\vdots & \vdots &\ddots &\vdots &\vdots \\
\bigstar & \bigstar & \cdots & \bigstar &\boldsymbol{B}_{2r}\\
\end{pmatrix}
\end{equation*}
Next, $\boldsymbol{M}_{2}^{1} =(\boldsymbol{D}^{11}_{2},\boldsymbol{D}^{21}_{2},\cdots,\boldsymbol{D}^{r1}_{2})\in\mathbb{R}^{(k_{1}+k_{2}+2)\times r(k_{1}+k_{2}+1)}$, where each sub-matrix $\boldsymbol{D}^{j1}_{2}$ is structured as
$$\boldsymbol{D}^{11}_{2}=\begin{pmatrix}
{a}_{1}^{1} &\boldsymbol{a}_{1}^{2} &\bigstar\\[2pt]
\boldsymbol{A}_{r}^{1}&\bigstar&\boldsymbol{a}_{1}^{3}\\
\bigstar &\boldsymbol{A}_{r}^{2}&\bigstar\\
\bigstar&\bigstar&\boldsymbol{A}_{r}^{3}
\end{pmatrix},
\quad\quad\cdots\cdots,\quad\quad
\boldsymbol{D}^{r1}_{2}=\begin{pmatrix}
{a}_{r}^{1} &\boldsymbol{a}_{r}^{2} &\bigstar\\[2pt]
\boldsymbol{A}_{1}^{1}&\bigstar&\boldsymbol{a}_{r}^{3}\\
\bigstar &\boldsymbol{A}_{1}^{2}&\bigstar\\
\bigstar&\bigstar&\boldsymbol{A}_{1}^{3}
\end{pmatrix}.$$
Following this way, the blocked matrix $\boldsymbol{M}_{3}^{1}\in \mathbb{R}^{(k_{1}+k_{2}+2)\times r(k_{1}+k_{2}+1)}$ is obtained in turn.
$\boldsymbol{M}_{3}^{1} =(\boldsymbol{D}^{12}_{3},\boldsymbol{D}^{22}_{3},\cdots,\boldsymbol{D}^{r2}_{3})$, where each sub-matrix $\boldsymbol{D}^{j2}_{3}$ is structured as
$$\boldsymbol{D}^{12}_{3}=\begin{pmatrix}
{b}_{r}^{1} &\boldsymbol{b}_{r}^{2} &\bigstar\\[2pt]
{B}_{1}^{1}&\bigstar&\boldsymbol{b}_{r}^{3}\\
\bigstar &\boldsymbol{B}_{1}^{2}&\bigstar\\
\bigstar&\bigstar&\boldsymbol{B}_{1}^{3}
\end{pmatrix},
\quad\quad\cdots\cdots,\quad\quad
\boldsymbol{D}^{r2}_{3}=\begin{pmatrix}
{b}_{1}^{1} &\boldsymbol{b}_{1}^{2} &\bigstar\\[2pt]
{B}_{r}^{1}&\bigstar&\boldsymbol{b}_{1}^{3}\\
\bigstar &\boldsymbol{B}_{r}^{2}&\bigstar\\
\bigstar&\bigstar&\boldsymbol{B}_{r}^{3}
\end{pmatrix}.$$

Consequently, compute the row sums of the matrix $\boldsymbol{\mathcal{M}}^{Dsrk}_{1}$ for this case, yielding the following results
\begin{equation*}
  \begin{array}{l}
(1)\text{ for $\mathcal{R}_{1}$},\,\,\omega(c_{1})=\frac{(k_{1}+1)(r-1)(tr+3t+1)}{2}+(k_{1}+1)(t+k_{1}+2),\\[6pt]
(2)\text{ for $\mathcal{R}_{2}$},\,\,\omega(c_{2})=\frac{k_{2}(r-1)(tr+3t+1)}{2}+(2r^{2}+r+1)t+k_{2}(2t+k_{1}+2),\\[6pt]
(3)\text{ from $\mathcal{R}_{3}$ to $\mathcal{R}_{t+1}$},\,\,\omega(v_{i})=\frac{r(11rt+4t+1)+t+1}{2} \,\,\text{for $1\leq i\leq t-1$},\\[6pt]
(4)\text{ From $\mathcal{R}_{t+2}$ to $\mathcal{R}_{rt+t+1}$.}\,\,\omega(u_{i}^{j1})=7rt+3t+1 \,\,\text{for $1\leq i\leq t$, $1\leq j\leq r$},\\[4pt]
(5)\text{ from $\mathcal{R}_{rt+t+2}$ to $\mathcal{R}_{2rt+t+1}$,}\,\,\omega(u_{i}^{j2})=10rt+3t+2 \,\,\text{for $1\leq i\leq t$, $1\leq j\leq r$},
\end{array}
\end{equation*}
where $t=k_{1}+k_{2}+1$. These calculations reveal that the local antimagic labeling of $S_{k_{1},k_{2}}\diamond rK_{2}$ results in five distinct colors when $r$ is odd, indicating that $\chi_{la}(S_{k_{1},k_{2}}\diamond rK_{2})\leq 5$.

When $r$ is even, the construction of the labeling matrix for $S_{k_{1},k_{2}}\diamond rK_{2}$ follows a similar procedure as outlined in  the proof of Theorem \ref{The-31}. The matrix is given by
$$\boldsymbol{\mathcal{M}}^{Dsrk}_{2}=\begin{pmatrix}
    \boldsymbol{M}_{1}^{2} &  \boldsymbol{M}_{2}^{2} &  \boldsymbol{M}_{3}^{2}\\[3pt]
    \boldsymbol{M}_{2}^{2\rm{\,T}} & \bigstar & \boldsymbol{M}_{4}^{2}\\[3pt]
   \boldsymbol{M}_{3}^{2\rm{\,T}}&  \boldsymbol{M}_{4}^{2} & \bigstar
  \end{pmatrix},$$
where $\boldsymbol{M}_{1}^{2}\in\mathbb{R}^{(k_{1}+k_{2}+2)\times (k_{1}+k_{2}+2)}$, $\boldsymbol{M}_{4}^{2}\in\mathbb{R}^{r(k_{1}+k_{2}+1)\times r(k_{1}+k_{2}+1)}$. The specific forms of these matrices are
\begin{equation*}
	\boldsymbol{M}_{1}^{2} =\begin{pmatrix}
		*& a_{0}^{1} &\boldsymbol{a}_{0}^{2\,}& \bigstar\,\\[2pt]
		a_{0}^{1} &* &\bigstar&\boldsymbol{a}_{0}^{3}\\[3pt]
		\boldsymbol{a}_{0}^{2\,{\rm T}}& \bigstar & \bigstar & \bigstar\\[3pt]
		\bigstar&\boldsymbol{a}_{0}^{3\,{\rm T}}  & \bigstar & \bigstar
	\end{pmatrix},\quad
	\boldsymbol{M}_{4}^{2} =\begin{pmatrix}
		\boldsymbol{B}_{r+1}& \bigstar & \cdots & \bigstar &\bigstar\\
		\bigstar & \boldsymbol{B}_{r+2}& \cdots & \bigstar &\bigstar\\
		\vdots & \vdots &\ddots &\vdots &\vdots \\
		\bigstar & \bigstar & \cdots & \bigstar &\boldsymbol{B}_{2r}\\
	\end{pmatrix}.
\end{equation*}
The blocked matrices $\boldsymbol{M}_{2}^{2}$ and $\boldsymbol{M}_{2}^{3}$ are
\begin{equation*}
\boldsymbol{M}_{2}^{2} ={\footnotesize{\left(\begin{array}{ccc:ccc:c:ccc:ccc}
     {a}_{1}'^{1} & \boldsymbol{a}_{1}'^{2} & \bigstar & {a}_{2}^{1} & \boldsymbol{a}_{2}^{2} &\bigstar  & \,\,\cdots\,\cdots\,\, & {a}_{r-1}^{1} & \boldsymbol{a}_{r-1}^{2}  & \bigstar  & {a}_{r}'^{1} & \boldsymbol{a}_{r}'^{2}  & \bigstar\\[6pt]
     {A}_{r}^{1} & \bigstar & \boldsymbol{a}_{1}'^{3} & {A}_{r-1}'^{1} & \bigstar & \boldsymbol{a}_{2}^{3} & \,\,\cdots\,\cdots\,\, &{A}_{2}'^{1} & \bigstar & \boldsymbol{a}_{r-1}^{3} & {A}_{1}'^{1} & \bigstar & \boldsymbol{a}_{r}^{3}\\[6pt]
    %a1=(n+1,...,2n) to a_{1}'=(2n,...,n+1)
    \bigstar &\boldsymbol{A}_{r}^{2}&\bigstar &\bigstar &\boldsymbol{A}_{r-1}'^{2}&\bigstar & \,\,\cdots\,\cdots\,\, & \bigstar &\boldsymbol{A}_{2}'^{2}&\bigstar & \bigstar &\boldsymbol{A}_{1}'^{2}&\bigstar\\[6pt]
    \bigstar&\bigstar&\boldsymbol{A}_{r}^{3} &\bigstar&\bigstar&\boldsymbol{A}_{r-1}'^{3}& \,\,\cdots\,\cdots\,\, &\bigstar&\bigstar&\boldsymbol{A}_{2}'^{3}&\bigstar&\bigstar&\boldsymbol{A}_{1}'^{3}
     %& \boldsymbol{A}_{r}& \boldsymbol{A}'_{r-1}&\cdots & \boldsymbol{A}'_{2}&\boldsymbol{A}_{1}'
    \end{array}\right)}},
    % convert a_{r} to A', and convert the original A_{1}' to a_{r}'
\end{equation*}
\begin{equation*}
\boldsymbol{M}_{3}^{2} ={\footnotesize{\left(\begin{array}{ccc:ccc:c:ccc:ccc}
     {b}_{r}^{1} & \boldsymbol{b}_{r}^{2} & \bigstar & {b}_{r-1}'^{1} & \boldsymbol{b}_{r-1}'^{2} &\bigstar  & \,\,\cdots\,\cdots\,\, & {b}_{2}'^{1} & \boldsymbol{b}_{2}'^{2}  & \bigstar  & {b}_{1}'^{1} & \boldsymbol{b}_{1}'^{2}  & \bigstar\\[6pt]
     {B}_{1}'^{1} & \bigstar & \boldsymbol{b}_{r}^{3} & {B}_{2}^{1} & \bigstar & \boldsymbol{b}_{r-1}'^{3} & \,\,\cdots\,\cdots\,\, &{B}_{r-1}^{1} & \bigstar & \boldsymbol{b}_{2}'^{3} & {B}_{r}^{1} & \bigstar & \boldsymbol{b}_{1}'^{3}\\[6pt]
    %a1=(n+1,...,2n) to a_{1}'=(2n,...,n+1)
    \bigstar &\boldsymbol{B}_{1}'^{2}&\bigstar & \bigstar &\boldsymbol{B}_{2}^{2}&\bigstar & \,\,\cdots\,\cdots\,\, & \bigstar &\boldsymbol{B}_{r-1}^{2}&\bigstar & \bigstar &\boldsymbol{B}_{r}^{2}&\bigstar \\[6pt]
    \bigstar&\bigstar&\boldsymbol{B}_{1}'^{3} &\bigstar&\bigstar&\boldsymbol{B}_{2}^{3}& \,\,\cdots\,\cdots &\bigstar&\bigstar&\boldsymbol{B}_{r-1}^{3} & \bigstar&\bigstar&\boldsymbol{B}_{r}^{3}
     %& \boldsymbol{A}_{r}& \boldsymbol{A}'_{r-1}&\cdots & \boldsymbol{A}'_{2}&\boldsymbol{A}_{1}'
    \end{array}\right)}}.
    % convert a_{r} to A', and convert the original A_{1}' to a_{r}'
\end{equation*}
We obtain the row sums of the labeling matrix $\boldsymbol{\mathcal{M}}^{Dsrk}_{2}$ for even $r$ as follow:
\begin{equation*}
  \begin{array}{l}
(1)\text{ for $\mathcal{R}_{1}$},\quad\omega(c_{1})=(k_{1}+1)(2rt-t+3k_{1}+3)+\frac{r(k_{1}+1)(5rt+3)}{2},\\[6pt]
(2)\text{ for $\mathcal{R}_{2}$},\quad\omega(c_{2})=k_{2}(2rt+5t-3k_{2})-t+1+\frac{rk_{2}(5rt+3)+r(11rt+4t+1)}{2},\\[6pt]
(3)\text{ from $\mathcal{R}_{3}$ to $\mathcal{R}_{t+1}$},\quad\omega(v_{i})=\frac{r(11rt+4t+1)}{2}+1-t\text{ for $1\leq i\leq t-1$},\\[6pt]
(4)\text{ from $\mathcal{R}_{t+2}$ to $\mathcal{R}_{rt+t+1}$,}\quad\omega(u_{i}^{j1})=7rt+3t+1\text{ for $1\leq i\leq t$ and $1\leq j\leq r$},\\[4pt]
(5)\text{ from $\mathcal{R}_{rt+t+2}$ to $\mathcal{R}_{2rt+t+1}$,}\quad\omega(u_{i}^{j2})=10rt+3t+2\text{ for $1\leq i\leq t$ and $1\leq j\leq r$},
\end{array}
\end{equation*}
where $t=k_{1}+k_{2}+1$. The labeling matrix $\boldsymbol{\mathcal{M}}^{Dsrk}_{2}$ yields five distinct values for the row sums, which correspond to a local antimagic labeling of $S_{k_{1},k_{2}}\diamond rK_{2}$ utilizing five colors. Consequently, $\chi_{la}(S_{k_{1},k_{2}}\diamond rK_{2})\leq 5$. This completes proof.
\end{proof}

\begin{example}\label{E4}
 For $k_{1}=3,k_{2}=4,r=6$, we can determine that $4\leq\chi_{la}(S_{3,4}\diamond 6K_{2})\leq 5$.
\end{example}
The local antimagic labeling matrix of $S_{3,4}\diamond 6K_{2}$ is
\begin{equation*}
\boldsymbol{\mathcal{M}}_{S_{3,4}\diamond 6K_{2}}=\begin{pmatrix}
		\boldsymbol{M}_{1} &  \boldsymbol{M}_{2} &  \boldsymbol{M}_{3}\\[3pt]
		\boldsymbol{M}_{2}^{\rm{T}} & \bigstar & \boldsymbol{M}_{4}\\[3pt]
		\boldsymbol{M}_{3}^{\rm{T}}&  \boldsymbol{M}_{4} & \bigstar
	\end{pmatrix}.
\end{equation*}
Refer to Appendix \ref{C} for detailed information on the blocked matrices in $\boldsymbol{\mathcal{M}}_{S_{3,4}\diamond 6K_{2}}$.

Accordingly, the row sums of $\boldsymbol{\mathcal{M}}_{S_{3,4}\diamond 6K_{2}}$ are
\begin{equation*}
\begin{array}{l}
(1)\,\text{for $\mathcal{R}_{1}$,} \quad\omega(c_{1})=3316, \quad\quad(2)\,\text{for $\mathcal{R}_{2}$,} \quad\omega(c_{2})=5088,\\[4pt]
(3)\,\text{from $\mathcal{R}_{3}$ to $\mathcal{R}_{9}$,} \quad\omega(v_{i})=1676,\quad 1\leq i\leq 7,\\[4pt]
(4)\,\text{from $\mathcal{R}_{10}$ to $\mathcal{R}_{57}$,} \quad\omega(u_{i}^{j1})=345 ,\quad 1\leq i\leq 7, 1\leq j\leq 6,\hspace{7.5em}\\[4pt]
(5)\,\text{from $\mathcal{R}_{58}$ to $\mathcal{R}_{105}$, }
\quad\omega(u_{i}^{j2})=506, \quad 1\leq i\leq 7, 1\leq j\leq 6.
\end{array}
\end{equation*}
Based on this labeling matrix $\boldsymbol{\mathcal{M}}_{S_{3,4}\diamond 6K_{2}}$, a local antimagic labeling of $S_{3,4}\diamond 6K_{2}$ using five colors is achieved.  Therefore, we have $4\leq\chi_{la}(S_{3,4}\diamond 6K_{2})\leq 5$.

\section{Conclusion}
This paper presents the local antimagic (total) chromatic numbers for a specific graph known as the firecracker graph $F_{n,k}$. Currently, the investigation of local antimagic chromatic numbers in the context of graphic operations primarily focuses on the join graph, the corona product and the lexicographic product of two graphs. Building upon this foundation, this paper shows the edge-corona product operation of two graphs, $G$ and $H$, and provides the local antimagic chromatic number associated with this graphic operation. In this context, graph $G$ is represented by a star $S_{k}$ or a double star $S_{k_{1},k_{2}}$, while graph $H$ is the empty graph $\overline{K_{r}}$ or the complete graph $K_2$. Moreover, the local antimagic (total) chromatic numbers for other graph families and graphic operations will be investigated in the next step.

\section{Declaration of Competing Interest}
The authors declare that they have  no known competing financial interests or personal relationships that could have appeared to influence the work reported in this paper.
%\begin{align*}

\section{Acknowledgements}
This work was conducted by Xue Yang during her master's program. This project has received funding from 2023 Xinjiang Natural Science Foundation General Project No. 2023D01A36; and 2023 Xinjiang Natural Science Foundation For Youths No. 2023D01B48. Hong Bian's work is partially supported by National Natural Science Foundation of China No. 12361072. We are grateful to Professor Xueliang Li for his meticulous revisions and polishing of the paper, which significantly improved its readability.

\section*{Appendix}
\appendix
\section{The matrices of Example \ref{E2}}\label{A}
Here, 
\begin{equation*}
	\boldsymbol{M}_{1}=\begin{pmatrix}
		* &b_{0}^{1} &\boldsymbol{b}_{0}^{2} & \bigstar\\
		b_{0}^{1} & * & \bigstar &\boldsymbol{b}_{0}^{3}\\[3pt]
		\boldsymbol{b}_{0}^{2\,{\rm T}} & \bigstar & \bigstar & \bigstar\\
		\bigstar & \boldsymbol{b}_{0}^{3\,{\rm T}} & \bigstar & \bigstar	
	\end{pmatrix},
	\quad {\rm where}\quad
	\begin{array}{l}
		{b}_{0}^{1}=1,\\[3pt]
		\boldsymbol{b}_{0}^{2}=(2,3,4,5),\\[3pt]
		\boldsymbol{b}_{0}^{3}=(6,7,8,9,10).
	\end{array}
\end{equation*}
%where ${b}_{0}^{1}=1$, $\boldsymbol{b}_{0}^{2}=(2,3,4,5)$, and $\boldsymbol{b}_{0}^{3}=(6,7,8,9,10)$.

And
\begin{equation*}
	\boldsymbol{M}_{2}
	%=(\boldsymbol{M}_{2}^{1},\boldsymbol{M}_{2}^{2},\boldsymbol{M}_{2}^{3},\boldsymbol{M}_{2}^{4})
	=\left(\begin{array}{ccc:ccc:ccc:ccc}
		{b}_{1}^{1}	& \boldsymbol{b}_{1}^{2} & \bigstar &{b}_{2}^{1}	& \boldsymbol{b}_{2}^{2} & \bigstar &{b}_{3}^{1}	& \boldsymbol{b}_{3}^{2} & \bigstar &{b}_{4}^{1}&\boldsymbol{b}_{4}^{2} & \bigstar\\
		%%two row
		{B}_{4}^{1} & \bigstar & \boldsymbol{b}_{1}^{3}
		&{B}_{3}^{1} & \bigstar & \boldsymbol{b}_{2}^{3}
		&{B}_{2}^{1} & \bigstar & \boldsymbol{b}_{3}^{3}
		&{B}_{1}^{1} & \bigstar & \boldsymbol{b}_{4}^{3}\\
		%%three row
		\bigstar & \boldsymbol{B}_{4}^{2} & \bigstar
		&\bigstar & \boldsymbol{B}_{3}^{2} & \bigstar
		&\bigstar & \boldsymbol{B}_{2}^{2} & \bigstar
		&\bigstar & \boldsymbol{B}_{1}^{2} & \bigstar\\
		%%four row
		\bigstar &\bigstar & \boldsymbol{B}_{4}^{3}
		&\bigstar &\bigstar & \boldsymbol{B}_{3}^{3}
		&\bigstar &\bigstar & \boldsymbol{B}_{2}^{3}
		&\bigstar &\bigstar & \boldsymbol{B}_{1}^{3}
	\end{array}\right),
\end{equation*}
where ${b}_{1}^{1}=20$, ${b}_{2}^{1}=31$, ${b}_{3}^{1}=42$, and ${b}_{4}^{1}=80$. And ${B}_{4}^{1}=81$, ${B}_{3}^{1}=70$, ${B}_{2}^{1}=59$, and ${B}_{1}^{1}=21$. In addition, the vectors in $\boldsymbol{M}_{2}$ are as follow:
\begin{equation*}
	\begin{array}{ll}
		\boldsymbol{b}_{1}^{2}=(19,18,17,16), &\boldsymbol{b}_{2}^{2}=(32,33,34,35),\\[4pt]
		\boldsymbol{b}_{3}^{2}=(44,46,48,50),&\boldsymbol{b}_{4}^{2}=(79,78,77,76),\\[4pt]
		\boldsymbol{b}_{1}^{3}=(15,14,13,12,11),&\boldsymbol{b}_{2}^{3}=(36,37,38,39,40),\\[4pt]
		\boldsymbol{b}_{3}^{3}=(52,54,56,58,60),
		&\boldsymbol{b}_{4}^{3}=(75,74,73,72,71).
	\end{array}
\end{equation*}
The diagonal matrices in $\boldsymbol{M}_{2}$ are as follow:
\begin{equation*}
	\begin{array}{ll}
		\boldsymbol{B}_{4}^{2}={\rm\bf diag}(82,83,84,85), &\boldsymbol{B}_{3}^{2}={\rm\bf diag}(69,68,67,66),\\[4pt]
		\boldsymbol{B}_{2}^{2}={\rm\bf diag}(57,55,53,51),
		&\boldsymbol{B}_{1}^{2}={\rm\bf diag}(22,23,24,25),\\[4pt]
		%% three
		\boldsymbol{B}_{4}^{3}={\rm\bf diag}(86,87,88,89,90),
		&\boldsymbol{B}_{3}^{3}={\rm\bf diag}(65,64,63,62,61),\\[4pt]
		\boldsymbol{B}_{2}^{3}={\rm\bf diag}(49,47,45,43,41),
		&\boldsymbol{B}_{1}^{3}={\rm\bf diag}(26,27,28,29,30).
	\end{array}
\end{equation*}

\section{The matrices of Example \ref{E3}}\label{B}
Here, the matrix $\boldsymbol{M}_{1}$ is
\begin{equation*}
	\boldsymbol{M}_{1}=\begin{pmatrix}
		* & \boldsymbol{a}_{0}\\
		\boldsymbol{a}_{0}^{{\rm T}} & \bigstar
	\end{pmatrix},
	\quad {\rm where}\quad\boldsymbol{a}_{0}=(1,2,3,4,5,6,7).
\end{equation*}
The corresponding matrix $\boldsymbol{M}_{2}$ is
\begin{equation*}
	\boldsymbol{M}_{2}=\begin{pmatrix}
		\boldsymbol{a}_{1} & \boldsymbol{a}_{2} & \boldsymbol{a}_{3} & \boldsymbol{a}_{4} & \boldsymbol{a}_{5}\\
		\boldsymbol{A}_{5} & \boldsymbol{A}_{4} & \boldsymbol{A}_{3} & \boldsymbol{A}_{2} & \boldsymbol{A}_{1}
	\end{pmatrix},
\end{equation*}
where
\begin{equation*}
	\small
	\begin{array}{ll}
		\boldsymbol{a}_{1}=(8,9,10,11,12,13,14),
		& \boldsymbol{A}_{5}={\rm\bf diag}(111,109,107,105,103,101,99),\\[4pt]
		\boldsymbol{a}_{2}=(21,20,19,18,17,16,15), &\boldsymbol{A}_{4}={\rm\bf diag}(85,87,89,91,93,95,97),\\[4pt] 
		\boldsymbol{a}_{3}=(22,23,24,25,26,27,28), 
		&\boldsymbol{A}_{3}={\rm\bf diag}(83,81,79,77,75,73,71),\\[4pt]
		\boldsymbol{a}_{4}=(35,34,33,32,31,30,29),
		&\boldsymbol{A}_{2}={\rm\bf diag}(57,59,61,63,65,67,69),\\[4pt] 
		\boldsymbol{a}_{5}=(36,37,38,39,40,41,42),
		&\boldsymbol{A}_{1}={\rm\bf diag}(55,53,51,49,47,45,43).
	\end{array}
\end{equation*}
Successively, the corresponding matrix $\boldsymbol{M}_{3}$ is
\begin{equation*}
	\boldsymbol{M}_{3}=\begin{pmatrix}
		\boldsymbol{b}_{5} & \boldsymbol{b}_{4} & \boldsymbol{b}_{3} & \boldsymbol{b}_{2} & \boldsymbol{b}_{1}\\
		\boldsymbol{B}_{1} & \boldsymbol{B}_{2} & \boldsymbol{B}_{3} & \boldsymbol{B}_{4} & \boldsymbol{B}_{5}
	\end{pmatrix},
\end{equation*}
where
\begin{equation*}
	\small
	\begin{array}{ll}
		\boldsymbol{b}_{5}=(112,110,108,106,104,102,100),
		& \boldsymbol{B}_{1}={\rm\bf diag}(113,114,115,116,117,118,119),\\[4pt]
		\boldsymbol{b}_{4}=(86,88,90,92,94,96,98), &\boldsymbol{B}_{2}={\rm\bf diag}(126,125,124,123,122,121,120),\\[4pt] 
		\boldsymbol{b}_{3}=(84,82,80,78,76,74,72), 
		&\boldsymbol{B}_{3}={\rm\bf diag}(127,128,129,130,131,132,133),\\[4pt]
		\boldsymbol{b}_{2}=(58,60,62,64,66,68,70),
		&\boldsymbol{B}_{4}={\rm\bf diag}(140,139,138,137,136,135,134),\\[4pt] 
		\boldsymbol{b}_{1}=(56,54,52,50,48,46,44),
		&\boldsymbol{B}_{5}={\rm\bf diag}(141,142,143,144,145,146,147).
	\end{array}
\end{equation*}
Finally, $\boldsymbol{M}_{4}$ is diagonal matrix ${\rm\bf diag}(\boldsymbol{B}_{6},\boldsymbol{B}_{7},\boldsymbol{B}_{8},\boldsymbol{B}_{9},\boldsymbol{B}_{10})$, where
%\begin{equation*}
%	\boldsymbol{M}_{4}=\begin{pmatrix}
	%		\boldsymbol{B}_{6} & &&&\\
	%		&\boldsymbol{B}_{7}  &&&\\
	%		&&\boldsymbol{B}_{8}  &&\\
	%		&&&\boldsymbol{B}_{9}  &\\
	%		&&&&\boldsymbol{B}_{10}  \\
	%	\end{pmatrix},
%\end{equation*}
\begin{equation*}
	\small
	\begin{array}{ll}
		\boldsymbol{B}_{6}={\rm\bf diag}(148,149,150,151,152,153,154),
		& \boldsymbol{B}_{7}={\rm\bf diag}(161,160,159,158,157,156,155),\\[4pt]
		\boldsymbol{B}_{8}={\rm\bf diag}(162,163,164,165,166,167,168), &\boldsymbol{B}_{9}={\rm\bf diag}(175,174,173,172,171,170,169),\\[4pt] 
		\boldsymbol{B}_{10}={\rm\bf diag}(176,177,178,179,180,181,182).
	\end{array}
\end{equation*}

\section{The labeling matrices of Example \ref{E4}}\label{C}
Here, the matrix $\boldsymbol{M}_{1}$ is
\begin{equation*}
	\boldsymbol{M}_{1}=\begin{pmatrix}
		* & a_{0}^{1} & \boldsymbol{a}_{0}^{2} &\bigstar\\[3pt]
		a_{0}^{1} &	*&\bigstar &  \boldsymbol{a}_{0}^{3} \\[3pt]
		\boldsymbol{a}_{0}^{2\,{\rm T}} &	\bigstar&\bigstar & \bigstar\\[3pt]
		\bigstar &\boldsymbol{a}_{0}^{3\,{\rm T}} &\bigstar&\bigstar
	\end{pmatrix},
	\quad{\rm where}\quad
	\begin{array}{l}
	a_{0}^{1}=1,\\[4pt]
	\boldsymbol{a}_{0}^{2}=(2,3,4),\\[4pt]
	\boldsymbol{a}_{0}^{3}=(5,6,7,8).
	\end{array}
\end{equation*}
The block matrix $\boldsymbol{M}_{2}$ is constructed by 
\begin{equation*}
	\small
\left(\begin{array}{ccc:ccc:ccc:ccc:ccc:ccc}
%% one
a_{1}^{1} & \boldsymbol{a}_{1}^{2} &\bigstar
& a_{2}^{1} & \boldsymbol{a}_{2}^{2} &\bigstar
&a_{3}^{1} & \boldsymbol{a}_{3}^{2} &\bigstar
&a_{4}^{1} & \boldsymbol{a}_{4}^{2} &\bigstar
&a_{5}^{1} & \boldsymbol{a}_{5}^{2} &\bigstar
&a_{6}^{1} & \boldsymbol{a}_{6}^{2} &\bigstar\\[4pt]
%%two
{A}_{6}^{1} &\bigstar & \boldsymbol{a}_{1}^{3}
&{A}_{5}^{1} &\bigstar & \boldsymbol{a}_{2}^{3}
&{A}_{4}^{1} &\bigstar & \boldsymbol{a}_{3}^{3}	
&{A}_{3}^{1} &\bigstar & \boldsymbol{a}_{4}^{3}
&{A}_{2}^{1} &\bigstar & \boldsymbol{a}_{5}^{3}
&{A}_{1}^{1} &\bigstar & \boldsymbol{a}_{6}^{3}\\[4pt]
%% three
\bigstar&\boldsymbol{A}_{6}^{2}&\bigstar
&\bigstar&\boldsymbol{A}_{5}^{2}&\bigstar
&\bigstar&\boldsymbol{A}_{4}^{2}&\bigstar
&\bigstar&\boldsymbol{A}_{3}^{2}&\bigstar
&\bigstar&\boldsymbol{A}_{2}^{2}&\bigstar
&\bigstar&\boldsymbol{A}_{1}^{2}&\bigstar\\[4pt]
%%four
\bigstar&\bigstar&\boldsymbol{A}_{6}^{3}
&\bigstar&\bigstar&\boldsymbol{A}_{5}^{3}
&\bigstar&\bigstar&\boldsymbol{A}_{4}^{3}
&\bigstar&\bigstar&\boldsymbol{A}_{3}^{3}
&\bigstar&\bigstar&\boldsymbol{A}_{2}^{3}
&\bigstar&\bigstar&\boldsymbol{A}_{1}^{3}
\end{array}\right)
\end{equation*}
where $a_{1}^{1}=16$, $a_{2}^{1}=24$, $a_{3}^{1}=25$, $a_{4}^{1}=40$, $a_{5}^{1}=41$, and $a_{6}^{1}=57$. And ${A}_{6}^{1}=121$, ${A}_{5}^{1}=137$, ${A}_{4}^{1}=119$, ${A}_{3}^{1}=89$, ${A}_{2}^{1}=87$, and ${A}_{1}^{1}=56$. The vectors in $\boldsymbol{M}_{2}$ are as follow:
\begin{equation*}
\begin{array}{ll}
\boldsymbol{a}_{1}^{2}=(15,14,13), 
&\boldsymbol{a}_{1}^{3}=(12,11,10,9),\\[4pt]
\boldsymbol{a}_{2}^{2}=(23,22,21), 
&\boldsymbol{a}_{2}^{3}=(20,19,18,17),\\[4pt]
\boldsymbol{a}_{3}^{2}=(26,27,28), 
&\boldsymbol{a}_{3}^{3}=(29,30,31,32),\\[4pt]
\boldsymbol{a}_{4}^{2}=(39,38,37), 
&\boldsymbol{a}_{4}^{3}=(36,35,34,33),\\[4pt]
\boldsymbol{a}_{5}^{2}=(42,43,44), 
&\boldsymbol{a}_{5}^{3}=(45,46,47,48),\\[4pt]
\boldsymbol{a}_{6}^{2}=(59,61,63), 
&\boldsymbol{a}_{6}^{3}=(65,67,69,71).
\end{array}
\end{equation*}
Then the diagonal matrices in $\boldsymbol{M}_{2}$ are as follows:
\begin{equation*}
	\begin{array}{ll}
		\boldsymbol{A}_{6}^{2}={\rm\bf diag}(123,125,127), 
		&\boldsymbol{A}_{6}^{3}={\rm\bf diag}(129,131,133,135),\\[4pt]
		\boldsymbol{A}_{5}^{2}={\rm\bf diag}(139,141,143), 
		&\boldsymbol{A}_{5}^{3}={\rm\bf diag}(145,147,149,151),\\[4pt]
		\boldsymbol{A}_{4}^{2}={\rm\bf diag}(117,115,113), 
		&\boldsymbol{A}_{4}^{3}={\rm\bf diag}(111,109,107,105),\\[4pt]
		\boldsymbol{A}_{3}^{2}={\rm\bf diag}(91,93,95), 
		&\boldsymbol{A}_{3}^{3}={\rm\bf diag}(97,99,101,103),\\[4pt]
		\boldsymbol{A}_{2}^{2}={\rm\bf diag}(85,83,81), 
		&\boldsymbol{A}_{2}^{3}={\rm\bf diag}(79,77,75,73),\\[4pt]
		\boldsymbol{A}_{1}^{2}={\rm\bf diag}(55,54,53), 
		&\boldsymbol{A}_{1}^{3}={\rm\bf diag}(52,51,50,49).
	\end{array}
\end{equation*}
The corresponding block matrix $\boldsymbol{M}_{3}$ is shown in
\begin{equation*}
	\small
	\left(\begin{array}{ccc:ccc:ccc:ccc:ccc:ccc}
		%% one
		b_{6}^{1} & \boldsymbol{b}_{6}^{2} &\bigstar
		&b_{5}^{1} & \boldsymbol{b}_{5}^{2} &\bigstar
		&b_{4}^{1} & \boldsymbol{b}_{4}^{2} &\bigstar
		&b_{3}^{1} & \boldsymbol{b}_{3}^{2} &\bigstar
		&b_{2}^{1} & \boldsymbol{b}_{2}^{2} &\bigstar
		&b_{1}^{1} & \boldsymbol{b}_{1}^{2} &\bigstar\\[4pt]
		%%two
		{B}_{1}^{1} &\bigstar & \boldsymbol{b}_{6}^{3}
		&{B}_{2}^{1} &\bigstar & \boldsymbol{b}_{5}^{3}
		&{B}_{3}^{1} &\bigstar & \boldsymbol{b}_{4}^{3}
		&{B}_{4}^{1} &\bigstar & \boldsymbol{b}_{3}^{3}
		&{B}_{5}^{1} &\bigstar & \boldsymbol{b}_{2}^{3}
		&{B}_{6}^{1} &\bigstar & \boldsymbol{b}_{1}^{3}
		\\[4pt]
		%% three
		\bigstar&\boldsymbol{B}_{1}^{2}&\bigstar
		&\bigstar&\boldsymbol{B}_{2}^{2}&\bigstar
		&\bigstar&\boldsymbol{B}_{3}^{2}&\bigstar
		&\bigstar&\boldsymbol{B}_{4}^{2}&\bigstar
		&\bigstar&\boldsymbol{B}_{5}^{2}&\bigstar
		&\bigstar&\boldsymbol{B}_{6}^{2}&\bigstar\\[4pt]
		%%four
		\bigstar&\bigstar&\boldsymbol{B}_{1}^{3}
		&\bigstar&\bigstar&\boldsymbol{B}_{2}^{3}
		&\bigstar&\bigstar&\boldsymbol{B}_{3}^{3}
		&\bigstar&\bigstar&\boldsymbol{B}_{4}^{3}
		&\bigstar&\bigstar&\boldsymbol{B}_{5}^{3}
		&\bigstar&\bigstar&\boldsymbol{B}_{6}^{3}
	\end{array}\right)
\end{equation*}
where $b_{6}^{1}=138$, $b_{5}^{1}=122$, $b_{4}^{1}=120$, $b_{3}^{1}=90$, $b_{2}^{1}=88$, and $b_{1}^{1}=58$. And ${B}_{1}^{1}=160$, ${B}_{2}^{1}=168$, ${B}_{3}^{1}=169$, ${B}_{4}^{1}=184$, ${B}_{5}^{1}=185$, and ${B}_{6}^{1}=200$. The vectors in $\boldsymbol{M}_{3}$ are as follow:
\begin{equation*}
	\begin{array}{ll}
		\boldsymbol{b}_{6}^{2}=(140,142,144), 
		&\boldsymbol{b}_{6}^{3}=(146,148,150,152),\\[4pt]
		\boldsymbol{b}_{5}^{2}=(124,126,128), 
		&\boldsymbol{b}_{5}^{3}=(130,132,134,136),\\[4pt]
		\boldsymbol{b}_{4}^{2}=(118,116,114), 
		&\boldsymbol{b}_{4}^{3}=(112,110,108,106),\\[4pt]
		\boldsymbol{b}_{3}^{2}=(92,94,96), 
		&\boldsymbol{b}_{3}^{3}=(98,100,102,104),\\[4pt]
		\boldsymbol{b}_{2}^{2}=(86,84,82), 
		&\boldsymbol{b}_{2}^{3}=(80,78,76,74),\\[4pt]
		\boldsymbol{b}_{1}^{2}=(60,62,64), 
		&\boldsymbol{b}_{1}^{3}=(66,68,70,72).
	\end{array}
\end{equation*}
Then the diagonal matrices in $\boldsymbol{M}_{3}$ are af follows:
\begin{equation*}
	\begin{array}{ll}
		\boldsymbol{B}_{1}^{2}={\rm\bf diag}(159,158,157), 
		&\boldsymbol{B}_{1}^{3}={\rm\bf diag}(156,155,154,153),\\[4pt]
		\boldsymbol{B}_{2}^{2}={\rm\bf diag}(167,166,165), 
		&\boldsymbol{B}_{2}^{3}={\rm\bf diag}(164,163,162,161),\\[4pt]
		\boldsymbol{B}_{3}^{2}={\rm\bf diag}(170,171,172), 
		&\boldsymbol{B}_{3}^{3}={\rm\bf diag}(173,174,175,176),\\[4pt]
		\boldsymbol{B}_{4}^{2}={\rm\bf diag}(183,182,181), 
		&\boldsymbol{B}_{4}^{3}={\rm\bf diag}(180,179,178,177),\\[4pt]
		\boldsymbol{B}_{5}^{2}={\rm\bf diag}(186,187,188), 
		&\boldsymbol{B}_{5}^{3}={\rm\bf diag}(189,190,191,192),\\[4pt]
		\boldsymbol{B}_{6}^{2}={\rm\bf diag}(199,198,197), 
		&\boldsymbol{B}_{6}^{3}={\rm\bf diag}(196,195,194,193).
	\end{array}
\end{equation*}
Finally, the diagonal matrices in $\boldsymbol{M}_{4}={\rm\bf diag}(\boldsymbol{B}_{7},\boldsymbol{B}_{8},\boldsymbol{B}_{9},\boldsymbol{B}_{10},\boldsymbol{B}_{11},\boldsymbol{B}_{12})$ are as follow:
\begin{equation*}
	\begin{array}{l}
		\boldsymbol{B}_{7}={\rm\bf diag}(208,207,206,205,204,203,202,201),\\[4pt]
		\boldsymbol{B}_{8}={\rm\bf diag}(216,215,214,213,212,211,210,209),\\[4pt]
		\boldsymbol{B}_{9}={\rm\bf diag}(217,218,219,220,221,222,223,224), \\[4pt]
		\boldsymbol{B}_{10}={\rm\bf diag}(232,231,230,229,228,227,226,225),\\[4pt]
		\boldsymbol{B}_{11}={\rm\bf diag}(233,234,235,236,237,238,239,240), \\[4pt]
		\boldsymbol{B}_{12}={\rm\bf diag}(248,247,246,245,244,243,242,241).
		\end{array}
\end{equation*}

\end{document}